\theoremstyle{change} 
\newtheorem{theorem}{Theorem}[section] 
\newtheorem{lemma}[theorem]{Lemma} 
\newtheorem{proposition}[theorem]{Proposition}
\newtheorem{corollary}[theorem]{Corollary}
\newtheorem{remark}[theorem]{Remark}
\newtheorem{example}[theorem]{Example}
\newtheorem{definition}[theorem]{Definition}
\newtheorem{notation}[theorem]{Notation}
\newtheorem{nothing}[theorem]{} 
\newtheorem{hypotheses}[theorem]{Hypotheses}
\newenvironment{proof}{\noindent{\bf Proof}\ }{\qed\bigskip}
\renewcommand{\l}{(}
\renewcommand{\r}{)}
\renewcommand{\le}{\leqslant}
\renewcommand{\leq}{\leqslant}
\renewcommand{\geq}{\geqslant}
\renewcommand{\unlhd}{\trianglelefteqslant}
\renewcommand{\marginpar}[1]{}
\newcommand{\Aut}{\mathrm{Aut}}
\newcommand{\BIGOP}[1]
  {\mathop{\mathchoice
  {\raise-0.22em\hbox{\huge $#1$}}
  {\raise-0.05em\hbox{\Large $#1$}}{\hbox{\large $#1$}}{#1}}}
\newcommand{\catC}{\catfont{C}}
\newcommand{\catfont}{\mathsf}
\newcommand{\bigdisj}{\mathop{\buildrel.\over\bigcup}}
\newcommand{\End}{\mathrm{End}}
\newcommand{\ftilde}{\tilde{f}}
\newcommand{\Hom}{\mathrm{Hom}}
\newcommand{\Hd}{\mathrm{Hd}}
\newcommand{\id}{\mathrm{id}}
\newcommand{\Ind}{\mathrm{Ind}}
\newcommand{\Inf}{\mathrm{Inf}}
\newcommand{\Inn}{\mathrm{Inn}}
\newcommand{\Mor}{\mathrm{Mor}}
\newcommand{\myiso}{\buildrel\sim\over\to}
\newcommand{\Lbar}{\overline{L}}
\newcommand{\Ob}{\mathrm{Ob}}
\newcommand{\Out}{\mathrm{Out}}
\newcommand{\qed}{\nobreak\hfill
                   \vbox{\hrule\hbox{\vrule\hbox to 5pt
                   {\vbox to 8pt{\vfil}\hfil}\vrule}\hrule}}
\newcommand{\Rad}{\mathrm{Rad}}
\newcommand{\scrJ}{\mathscr{J}}
\newcommand{\stab}{\mathrm{stab}}
\newcommand{\Soc}{\mathrm{Soc}}
\newcommand{\sgn}{\mathrm{sgn}}
\newcommand{\Ttilde}{\tilde{T}}
\title{Quasi-Hereditary Structure of Twisted Split Category Algebras Revisited\footnote{{\bf MR Subject Classification:} 16G10, 20M17, 19A22. {\bf Keywords:} split category, regular monoid, quasi-hereditary algebra, highest weight category, biset functor, Brauer algebra.} }
\author{\small Robert Boltje\\
  \small Department of Mathematics\\
  \small University of California\\
  \small Santa Cruz, CA 95064\\
  \small U.S.A.\\
  \small boltje@ucsc.edu
  \and
  \small Susanne Danz\\
  \small Department of Mathematics\\ 
  \small University of Kaiserslautern\\
  \small P.O. Box 3049\\
  \small 65653 Kaiserslautern\\
  \small Germany\\
  \small danz@mathematik.uni-kl.de}
\date{May 2, 2014}
\begin{document}

\sloppy

\maketitle


\begin{abstract}
Let $k$ be a field of characteristic $0$, let $\catC$ be a finite split category, let $\alpha$ be a 2-cocycle of $\catC$ with values in the multiplicative group of $k$, and consider the resulting twisted category algebra $A:=k_\alpha\catC$. Several interesting algebras arise that way,  for instance, the Brauer algebra. Moreover, the category of biset functors over $k$ is equivalent to a module category over a condensed algebra $\varepsilon A\varepsilon$, for an idempotent $\varepsilon$ of $A$. In \cite{BDIII} the authors proved that $A$ is quasi-hereditary (with respect to an explicit partial order $\le$ on the set of irreducible modules), and standard modules were given explicitly. Here, we improve the partial order $\le$ by introducing a coarser order $\unlhd$ leading to the same results on $A$, but which allows to pass the quasi-heredity result to the condensed algebra $\varepsilon A\varepsilon$ describing biset functors, thereby giving a different proof of a quasi-heredity result of Webb, see \cite{Webb}. The new partial order $\unlhd$ has not been considered before, even in the special cases, and we evaluate it explicitly for the case of biset functors and the Brauer algebra. It also puts further restrictions on the possible composition factors of standard modules. 
\end{abstract}


\section{Introduction}\label{sec intro}

Suppose that $k$ is a field and that $\catC$ is a finite category, that is, the morphisms in $\catC$ form a finite set. Suppose
further that $\alpha$ is a $2$-cocycle of $\catC$ with values in $k^\times$. Then the {\it twisted category algebra} $k_\alpha\catC$ is
the finite-dimensional $k$-algebra with the morphisms in $\catC$ as a $k$-basis, and with multiplication induced by
composition of morphisms, twisted by the $2$-cocycle $\alpha$; for a precise definition, see \ref{nota twisted cat alg}.
In the case where the category has one object only this just recovers the notion of a twisted monoid algebra.

\medskip
In recent years, (twisted) category algebras and (twisted) monoid algebras have been intensively studied by B. Steinberg et al., for instance in \cite{GMS}, as well as 
by Linckelmann and Stolorz, who, in particular, determined the isomorphism classes of simple $k_\alpha\catC$-modules in \cite{LS}.
As a consequence of \cite[Theorem~1.2]{LS} the isomorphism classes of simple $k_\alpha\catC$-modules can be parametrized by 
a set $\Lambda$ of pairs
whose first entry varies over certain finite groups related to $\catC$ (called {\it maximal subgroups} of $\catC$), and whose second entry
varies over the isomorphism classes of simple modules over twisted group algebras of these maximal subgroups.

\medskip
For convenience, in the following we shall suppose that $k$ has characteristic 0, but this condition can be relieved, as we shall see in Theorem~\ref{thm main}.
Moreover, we suppose that the category $\catC$ is {\it split}, that is, every morphism
in $\catC$ is a split morphism, see \ref{nota twisted cat alg}(a).
It has been shown by the authors in \cite{BDIII} and, independently, by Linckelmann and Stolorz in \cite{LS2} 
that the resulting twisted category algebra $k_\alpha\catC$ is quasi-hereditary in the sense of \cite{CPS}.
In \cite{BDIII} we also determined the standard modules of $k_\alpha\catC$ with respect to a natural partial order $\leq$
on the labelling set $\Lambda$ of isomorphism classes of simple modules, which depends only on the first entries of pairs in $\Lambda$ and is explained in \ref{noth leq}.

\medskip
Since $k_\alpha\catC$ is quasi-hereditary with respect to $(\Lambda,\leq)$, it is also quasi-hereditary with respect to any refinement of $\leq$, and the corresponding standard and costandard modules are the same as those with respect to $(\Lambda,\leq)$. This is an immediate consequence of Defintion~\ref{defi qh} below.

In this paper we introduce a new partial order $\unlhd$ on $\Lambda$ such that the partial order $\leq$ is a refinement
of $\unlhd$. 
We shall then show in Theorem~\ref{thm main} that the algebra $k_\alpha\catC$ remains quasi-hereditary with respect to this new partial order. Furthermore, we shall show that the standard and costandard modules of
 $k_\alpha\catC$ with respect to the two partial orders coincide, and we shall give explicit descriptions of these modules. 
The partial order $\unlhd$ seems more natural than the initial one, since it depends on both entries of the pairs in $\Lambda$, and it allows to pass the hereditary structure to idempotent condensed algebras $\varepsilon \cdot k_\alpha\catC\cdot\varepsilon$, for $\varepsilon^2=\varepsilon\in k_\alpha\catC$, in a particular case we are interested in and that is related to the category of biset functors, see Section~\ref{sec bisets}.  
In Sections~\ref{sec sqsubseteq} and \ref{sec duality} we shall give a number of possible reformulations and simplifications of the defining properties of the partial order $\unlhd$
 that are particularly useful when considering concrete examples. 
 
\medskip
It is known, by work of Wilcox \cite{Wil}, that diagram algebras such as Brauer algebras, Temperley--Lieb algebras, partition algebras, and relatives of these arise naturally as twisted split category algebras and twisted regular monoid algebras; for 
a list of references, see the introductions to \cite{BDIII,LS2}. 
Our initial motivation for studying the structure of twisted category algebras comes from our results in
 \cite{BDII}, where we have shown that 
the double Burnside algebra of a finite group over $k$ is isomorphic to a $k$-algebra that is obtained from
a twisted split category algebra by idempotent condensation. The latter result is also valid for more general algebras related to
the category of biset functors on a finite section-closed set of finite groups; see \cite[Section 5]{BDII} and Section~\ref{sec bisets} of this article.
Therefore, in Section~\ref{sec bisets} and in Section~\ref{sec brauer} we shall apply our general results concerning
the quasi-hereditary structure of twisted split category algebras to the algebra related to biset functors just mentioned and to the 
Brauer algebra, respectively. In doing so we shall, in particular, derive new information 
about decomposition numbers of the algebras under consideration, since
our partial order $\unlhd$ is in general strictly coarser than the well-known partial order
$\leq$ on $\Lambda$.

Moreover, in Theorem~\ref{thm condense} we 
recover and slightly improve a result of Webb in \cite{Webb} stating that the category of biset functors over $k$ is a highest weight
category, when the underlying category of finite groups is finite. 
The key step towards the latter result will be showing that our newly introduced
partial order $\unlhd$ behaves particularly well with respect to this particular idempotent condensation, whereas the finer order $\leq$ does not; see Example~\ref{expl S_4}. 

\medskip
Many known examples of twisted split category algebras $k_\alpha\catC$ arise from categories  equipped with a contravariant functor
that is the identity on objects and that gives rise to a duality on the category of left $k_\alpha\catC$-modules. 
Such a  duality, in particular, sends standard modules to costandard modules, and allows for further simplifications of the partial order
$\unlhd$ on the set $\Lambda$. We shall analyze this duality in detail in Section~\ref{sec duality}, and apply these general 
results to our concrete examples in Section~\ref{sec bisets} and Section~\ref{sec brauer}.

%

\bigskip\noindent
{\bf Acknowledgements:}\, The second author's research has been supported by  the DFG Priority Programme
`Representation Theory' (Grant \# DA1115/3-1), and a Marie Curie Career Integration Grant
(PCIG10-GA-2011-303774).


\section{Quasi-Hereditary Algebras}\label{sec qh}

In the following, $k$ will denote an arbitrary field.
We begin by briefly recalling the definition and some basic  properties of a quasi-hereditary $k$-algebra needed in this article. For more details and background we refer the reader to \cite{CPS} and \cite[Appendix]{D}. Unless specified otherwise, 
modules over any finite-dimensional $k$-algebra are understood to be finite-dimensional {\em left} modules.

\begin{definition}[Cline--Parshall--Scott \cite{CPS}]\label{defi qh}
Let $k$ be a field, and let $A$ be  a finite-dimensional $k$-algebra. Let further $\Lambda$ be a finite set parametrizing the isomorphism classes of simple $A$-modules, and let $\leq $ be a partial order on $\Lambda$.  For $\lambda\in\Lambda$, let $D_\lambda$ be  a simple $A$-module labelled by $\lambda$, let $P_\lambda$ be a projective cover of $D_\lambda$, and let $I_\lambda$ be an injective envelope of $D_\lambda$.

\smallskip

(a)\, For $\lambda\in\Lambda$, let $\Delta_\lambda$ be the unique maximal quotient module $M$ of $P_\lambda$ such that
all composition factors of $\Rad(M)$ belong to the set $\{D_\mu\mid \mu<\lambda\}$. Then $\Delta_\lambda$ is called the
{\it standard module} of $A$ with respect to $(\Lambda,\leq)$ labelled by $\lambda$.

\smallskip

(b)\, For $\lambda\in\Lambda$, let $\nabla_\lambda$ be the unique maximal submodule $N$ of $I_\lambda$ such that all composition factors of $N/\Soc(N)$ belong to the set $\{D_\mu\mid \mu<\lambda\}$. Then $\nabla_\lambda$ is called the {\it 
costandard module} of $A$ with respect to $(\Lambda,\leq)$ labelled by $\lambda$.
\smallskip

(c)\, The $k$-algebra $A$ is called {\it quasi-hereditary} with respect to $(\Lambda,\leq)$ if, for each $\lambda\in\Lambda$, the projective module $P_\lambda$ admits a filtration
$$0=P_\lambda^{(0)}\subset P_\lambda^{(1)}\subset\cdots\subset P_\lambda^{(m_\lambda)}=P_\lambda$$
satisfying the following properties:

\smallskip
\quad (i)\, $P_\lambda^{(m_\lambda)}/P_\lambda^{(m_\lambda-1)}\cong \Delta_\lambda$, and

\smallskip
\quad (ii)\, if $1\leq q<m_\lambda$ then $P_\lambda^{(q)}/P_\lambda^{(q-1)}\cong \Delta_{\mu}$, for some $\mu\in\Lambda$ with $\lambda<\mu$.
\end{definition}

\bigskip
The notion of a quasi-hereditary algebra can be defined equivalently in terms of costandard modules:

\begin{proposition}[\protect{\cite[Definition~A2.1, Lemma~A3.5]{D}}]\label{prop co standard}
With the notation as in Definition~\ref{defi qh}, the $k$-algebra $A$ is quasi-hereditary with respect to $(\Lambda,\leq)$ if and only if, for each $\lambda\in\Lambda$, the injective module $I_\lambda$ admits a filtration
$$0=I_\lambda^{(0)}\subset I_\lambda^{(1)}\subset\cdots\subset I_\lambda^{(n_\lambda)}=I_\lambda$$
such that

\quad {\rm (i)}\, $I_\lambda^{(1)}\cong \nabla_\lambda$, and

\quad {\rm (ii)}\, if $1< q\leq n_\lambda$ then $I_\lambda^{(q)}/I_\lambda^{(q-1)}\cong \nabla_{\mu}$, for some $\mu\in\Lambda$ with $\lambda<\mu$.
\end{proposition}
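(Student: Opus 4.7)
The plan is to proceed via a $k$-linear duality argument. Consider the contravariant exact functor $F := \Hom_k(-,k)$, which gives an equivalence between the category of finite-dimensional left $A$-modules and that of finite-dimensional left $A^{\mathrm{op}}$-modules, with $F\circ F$ naturally isomorphic to the identity. Under $F$, submodules and quotient modules are interchanged, simple modules go to simple modules, and indecomposable projective covers correspond to indecomposable injective envelopes. After identifying each simple $A^{\mathrm{op}}$-module $F(D_\lambda)$ with the label $\lambda \in \Lambda$, one obtains $F(P_\lambda^{A^{\mathrm{op}}}) \cong I_\lambda^A$ for every $\lambda \in \Lambda$.

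Next I would verify that $F$ interchanges standard and costandard modules, i.e.\ $F(\Delta_\lambda^{A^{\mathrm{op}}}) \cong \nabla_\lambda^A$. This uses the natural isomorphisms $F(M/\Rad M) \cong \Soc(FM)$ and $F(\Rad M) \cong FM/\Soc(FM)$, under which the defining property of $\Delta_\lambda^{A^{\mathrm{op}}}$ as the maximal quotient of $P_\lambda^{A^{\mathrm{op}}}$ whose radical has composition factors only in $\{D_\mu : \mu < \lambda\}$ translates into the defining property of $\nabla_\lambda^A$ as the maximal submodule of $I_\lambda^A$ whose quotient by its socle has composition factors only in $\{D_\mu : \mu < \lambda\}$. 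The inclusion-reversing correspondence between quotients of $P_\lambda^{A^{\mathrm{op}}}$ and submodules of $I_\lambda^A$ induced by $F$ makes the two maximality conditions literally equivalent.

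The key further input is the left-right symmetry of quasi-heredity: $A$ is quasi-hereditary with respect to $(\Lambda, \leq)$ if and only if $A^{\mathrm{op}}$ is. This follows from the original Cline--Parshall--Scott formulation of quasi-heredity via heredity chains of two-sided ideals, which is manifestly left-right symmetric. Granting this, a projective filtration $0 = P^{(0)} \subset \cdots \subset P^{(m_\lambda)} = P_\lambda^{A^{\mathrm{op}}}$ as in Definition~\ref{defi qh}(c) dualizes, via $I_\lambda^{(q)} := \ker\bigl(I_\lambda^A \twoheadrightarrow F(P^{(m_\lambda - q)})\bigr)$, to a filtration of $I_\lambda^A$ whose successive subquotients are $F(\Delta_\mu^{A^{\mathrm{op}}}) \cong \nabla_\mu^A$ in the correct order: the bottom term $I_\lambda^{(1)}$ is $\nabla_\lambda^A$, and the remaining $I_\lambda^{(q)}/I_\lambda^{(q-1)}$ for $1 < q \leq n_\lambda$ are $\nabla_\mu^A$ with $\mu > \lambda$. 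The converse direction runs the same argument with $F$ applied in the opposite direction. I expect the main subtlety to be the left-right symmetry step, which is classical but worth citing explicitly from the heredity-chain formulation in \cite{CPS}; the remaining verifications are formal consequences of the duality $F$.
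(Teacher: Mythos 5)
The paper offers no proof of Proposition~\ref{prop co standard}: it is cited to Donkin~\cite[Definition~A2.1, Lemma~A3.5]{D}, so there is nothing internal to compare against. Your argument is nonetheless correct and amounts to the standard proof. The duality bookkeeping checks out: with $F=\Hom_k(-,k)$ one has $F(P_\lambda^{A^{\mathrm{op}}})\cong I_\lambda^{A}$, and the maximality description of $\Delta_\lambda^{A^{\mathrm{op}}}$ in terms of quotients of $P_\lambda^{A^{\mathrm{op}}}$ transports, via the order-reversing bijection between quotients and submodules induced by $F$, precisely to the description of $\nabla_\lambda^{A}$; likewise a $\Delta$-filtration of $P_\lambda^{A^{\mathrm{op}}}$ with $\Delta_\lambda^{A^{\mathrm{op}}}$ on top dualizes, by taking kernels of the surjections $I_\lambda^{A}\twoheadrightarrow F(P^{(m_\lambda-q)})$, to a $\nabla$-filtration of $I_\lambda^{A}$ with $\nabla_\lambda^{A}$ at the bottom and $\nabla_\mu^{A}$, $\mu>\lambda$, above it, and the argument reverses.

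You correctly identify the one genuinely nontrivial input, namely the left--right symmetry of quasi-heredity. Since the paper's Definition~\ref{defi qh}(c) is the Dlab--Ringel style condition (projectives have $\Delta$-filtrations in the stated order), while left--right symmetry is most transparent from the heredity-chain definition, your proof implicitly uses the equivalence of those two formulations twice (once for $A$, once for $A^{\mathrm{op}}$) in addition to the symmetry of heredity chains. That is a legitimate route, but it is worth being explicit that you are importing that equivalence as a black box rather than proving it; it is exactly the content of the cited Donkin lemma (or, alternatively, of Dlab--Ringel). With that citation made precise, your proof is complete.
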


\begin{nothing}\label{noth duals}
{\bf Dual modules.}\, 
(a) Suppose again that $A$ is a finite-dimensional $k$-algebra. For any left $A$-module $M$, we set $M^*:=\Hom_k(M,k)$ and view $M^*$ as a right $A$-module via $(\lambda\cdot a)(m):=\lambda(am)$, for $\lambda\in M^*$, $a\in A$, and $m\in M$. Similarly, if $N$ is a right $A$-module, the $k$-linear dual $N^*$ is a left $A$-module. If $\Lambda$ labels the isomorphism classes of simple left $A$-modules then it also labels the isomorphism classes of simple right $A$-modules: if $D_\lambda$ is the simple left $A$-modules labelled by $\lambda\in\Lambda$ then we choose $D_\lambda^*$ to be the simple right $A$-module labelled by $\lambda$. 
The dual $P_\lambda^*$ of the projective cover of $D_\lambda$ is an injective envelope of $D_\lambda^*$, and the dual $I_\lambda^*$ of the injective envelope of $D_\lambda$ is a projective cover of $D_\lambda^*$.
Moreover, if $\leq$ is a partial order on $\Lambda$ and if $\Delta_\lambda$ (respectively, $\nabla_\lambda$) is the corresponding standard (respectively, costandard) left $A$-module labelled by $\lambda\in\Lambda$, then $\nabla_\lambda^*$ (respectively, $\Delta_\lambda^*$) is the corresponding standard (respectively, costandard) right $A$-module labelled by $\lambda$. Using Proposition~\ref{prop co standard} one sees that $A$ is quasi-hereditary with respect to $\leq$ in the left module formulation if and only if $A$ is quasi-hereditary in the similar right module formulation.

\smallskip
(b) Suppose further that there is a $k$-algebra anti-involution $-^\circ:A\to A$, that is, $-^\circ$ is a $k$-linear isomorphism of order 2 that satisfies $(ab)^\circ=b^\circ a^\circ$, for all $a,b\in A$.
Then, given any finite-dimensional left $A$-module $M$, its $k$-linear dual $M^*$ also carries a left $A$-module structure via
\begin{equation}\label{eqn dual module}
(a\cdot f)(m):=f(a^\circ\cdot m)\quad (a\in A, \, f\in\Hom_k(M,k),\, m\in M)\,.
\end{equation}
We denote the resulting left $A$-module by $M^\circ$. Note that $(M^\circ)^\circ\cong M\cong (M^*)^*$ as left $A$-modules. 

\smallskip
If $A$ is the group algebra $kG$ for a finite group $G$, then we have a canonical anti-involution induced by the map $G\to G,\; g\mapsto g^{-1}$. By abuse of notation, in this particular case we still write $M^*$ for the left $A$-module $M^\circ$. So, in this case, $M^*$ can mean a left or a right $kG$-module, and we shall clarify this when necessary.

\smallskip
Let $\Lambda$ be a set labelling the isomorphism classes of simple left $A$-modules, and let $\leq$ be a partial order on $\Lambda$.
For $\lambda\in\Lambda$, the dual $D_\lambda^\circ$ of the simple left $A$-module $D_\lambda$ is again a simple left $A$-module, so that there is some $\lambda^\circ\in\Lambda$ with $D_\lambda^\circ\cong D_{\lambda^\circ}$. Moreover, the dual $P_\lambda^\circ$ of the projective cover $P_\lambda$ of $D_\lambda$ then is an injective envelope of $D_\lambda^\circ$. Imposing an additional assumption on the poset $(\Lambda,\leq)$, the duality also relates standard modules to costandard modules as follows. 
\end{nothing}

\begin{proposition}\label{prop dual standard modules}
Retain the assumptions from \ref{noth duals}. Suppose further that, for all $\lambda,\mu\in\Lambda$, one has $\lambda\leq \mu$ if and only if $\lambda^\circ\leq \mu^\circ$. Then, for each $\lambda\in\Lambda$, one has $A$-module isomorphisms
$$\Delta_\lambda^\circ\cong \nabla_{\lambda^\circ}\quad \text{ and }\quad \nabla_\lambda^\circ\cong \Delta_{\lambda^\circ}\,.$$
%
\end{proposition}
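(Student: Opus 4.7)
The strategy is to exploit the fact that the contravariant functor $M \mapsto M^\circ$ is exact (because $-^\circ$ comes from a $k$-algebra anti-involution and the underlying $k$-linear dual is exact) and that it swaps projectives with injectives, heads with socles, and submodules with quotients. So I expect $\Delta_\lambda^\circ$ to satisfy the defining characterization of $\nabla_{\lambda^\circ}$, and vice versa, with the hypothesis on $\leq$ being exactly what is needed to translate the index set of allowed composition factors.

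More concretely, the plan is as follows. First, I would record the behavior of the duality on the relevant structural pieces: for any finite-dimensional $A$-module $M$, dualizing the exact sequence $0 \to \Rad(M) \to M \to M/\Rad(M) \to 0$ gives $\Soc(M^\circ) \cong (M/\Rad M)^\circ$ and $M^\circ/\Soc(M^\circ) \cong \Rad(M)^\circ$; composition factors are transported by $D_\mu \leftrightarrow D_{\mu^\circ}$; and $P_\lambda^\circ \cong I_{\lambda^\circ}$ (as already noted in \ref{noth duals}(b)).

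Next, I would dualize the defining surjection $P_\lambda \twoheadrightarrow \Delta_\lambda$ to obtain an embedding $\Delta_\lambda^\circ \hookrightarrow P_\lambda^\circ \cong I_{\lambda^\circ}$. The socle of $\Delta_\lambda^\circ$ is $(\Hd \Delta_\lambda)^\circ \cong D_\lambda^\circ \cong D_{\lambda^\circ}$, and by the preceding paragraph the quotient $\Delta_\lambda^\circ/\Soc(\Delta_\lambda^\circ) \cong \Rad(\Delta_\lambda)^\circ$ has all composition factors in $\{D_{\mu^\circ} : \mu<\lambda\}$, which by the hypothesis on the order equals $\{D_\nu : \nu<\lambda^\circ\}$. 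Thus $\Delta_\lambda^\circ$ is a submodule of $I_{\lambda^\circ}$ meeting the condition defining $\nabla_{\lambda^\circ}$, so by maximality $\Delta_\lambda^\circ \subseteq \nabla_{\lambda^\circ}$.

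For the reverse inclusion I would apply the symmetric argument to $\nabla_{\lambda^\circ}$: dualizing the embedding $\nabla_{\lambda^\circ} \hookrightarrow I_{\lambda^\circ} \cong P_\lambda^\circ$ yields a surjection $P_\lambda \cong (P_\lambda^\circ)^\circ \twoheadrightarrow \nabla_{\lambda^\circ}^\circ$, and the analogous head/radical computation together with the hypothesis shows that $\nabla_{\lambda^\circ}^\circ$ is a quotient of $P_\lambda$ satisfying the condition defining $\Delta_\lambda$. By maximality of $\Delta_\lambda$ the surjection factors as $P_\lambda \twoheadrightarrow \Delta_\lambda \twoheadrightarrow \nabla_{\lambda^\circ}^\circ$, and dualizing this second surjection gives $\nabla_{\lambda^\circ} \hookrightarrow \Delta_\lambda^\circ$. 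Combining the two inclusions yields $\Delta_\lambda^\circ \cong \nabla_{\lambda^\circ}$. The second isomorphism $\nabla_\lambda^\circ \cong \Delta_{\lambda^\circ}$ is then immediate by substituting $\lambda^\circ$ for $\lambda$ in the first (noting $(\lambda^\circ)^\circ=\lambda$) and dualizing once more, using $(M^\circ)^\circ \cong M$.

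No serious obstacle is anticipated: the only nontrivial ingredient is precisely the hypothesis $\lambda\leq\mu \Leftrightarrow \lambda^\circ\leq\mu^\circ$, which is used exactly to rewrite $\{D_{\mu^\circ} : \mu<\lambda\}=\{D_\nu : \nu<\lambda^\circ\}$ when transporting the composition factor condition across the duality.
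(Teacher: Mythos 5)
Your proof is correct and takes essentially the same approach as the paper: dualize the maximal-quotient characterization of $\Delta_\lambda$ across the anti-involution to recover the maximal-submodule characterization of $\nabla_{\lambda^\circ}$, using the hypothesis on $\leq$ exactly where you use it. The paper states this more tersely (it asserts in one step that the dual of the largest quotient with the radical condition is the largest submodule with the corresponding socle condition), whereas you spell out the two maximality inclusions, but the underlying argument is the same.
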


\begin{proof}
Let $\lambda\in \Lambda$. By definition, $\Delta_\lambda$ is the largest quotient module of $P_\lambda$ such that all composition factors of its radical belong to $\{D_\mu\mid \mu<\lambda\}$. Hence $\Delta_\lambda^\circ$ is the largest submodule of $P_\lambda^\circ\cong I_{\lambda^\circ}$ such that all composition factors of its cosocle belong to the set $\{D_\mu^\circ\mid \mu<\lambda\}=\{D_{\mu^\circ}\mid \mu<\lambda\}$.
Since, by our hypothesis, $\mu<\lambda$ if and only if $\mu^\circ<\lambda^\circ$, this shows that $\Delta_\lambda^\circ\cong \nabla_{\lambda^\circ}$.

The second isomorphism in (a) follows analogously.
%
\end{proof}

\begin{remark}\label{rem BGG}
Keep the assumptions
as in \ref{noth duals}(b).
In the case where, for every $\lambda\in\Lambda$, one has $D_\lambda^\circ\cong D_\lambda$, the quasi-hereditary algebra
$A$ is usually called a {\it BGG-algebra}, see for instance \cite{CPS2, I}.
\end{remark}

Idempotent condensation of a quasi-hereditary algebra results again in a quasi-hereditary algebra if the simple modules annihilated by the idempotent form a subset of $\Lambda$ that is closed from above. The following proposition makes this more precise. It is an immediate consequence of Green's idempotent condensation theory, \cite[Section~6.2]{Gr}, and \cite[Proposition A.3.11]{D}. We shall use it in Section~\ref{sec bisets} to show that, in certain situations, biset functor categories are equivalent to module categories of quasi-hereditary algebras.

\begin{proposition}\label{prop qh condensed}
Assume that $A$ is a quasi-hereditary $k$-algebra with respect to $(\Lambda,\le)$, and let $e$ be an idempotent satisfying the following condition: If $\lambda\le \mu$ are elements of $\Lambda$ and if $eD_\lambda\neq\{0\}$ then also $eD_\mu\neq\{0\}$. Set $\Lambda':=\{\lambda\in\Lambda\mid eD_\lambda\neq\{0\}\}$ and set $A':=eAe$. Then, as $\lambda$ varies over $\Lambda'$, the $A'$-modules $eD_\lambda$ form a complete set of representatives of the isomorphism classes of simple $A'$-modules, and $A'$ is a quasi-hereditary algebra with respect to $(\Lambda',\le)$. Moreover, for $\lambda\in\Lambda'$, the corresponding standard $A'$-module  is given by $e\Delta(\lambda)$, and the corresponding costandard $A'$-module is given by $e\nabla(\lambda)$.
\end{proposition}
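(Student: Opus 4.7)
The plan is to combine two standard ingredients: Green's idempotent condensation theory \cite[Section~6.2]{Gr}, which handles the parametrization of simple and projective modules for $A'=eAe$, and the filtration-based characterization of quasi-heredity \cite[Proposition A.3.11]{D}, which will identify $e\Delta_\lambda$ and $e\nabla_\lambda$ as the standard and costandard $A'$-modules. I would first record that the condensation functor $F:=e\cdot(-)$ from left $A$-modules to left $A'$-modules is exact, since $Ae$ is a direct summand of $A$ and hence a projective right $A$-module. Green's theory then yields that $\{eD_\lambda:\lambda\in\Lambda'\}$ is a complete set of representatives of isomorphism classes of simple $A'$-modules and that $eP_\lambda$ (respectively $eI_\lambda$) is a projective cover (respectively injective envelope) of $eD_\lambda$ for each $\lambda\in\Lambda'$.

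The first substantive step is to prove that $e\Delta_\mu=0$ precisely when $\mu\notin\Lambda'$, and symmetrically that $e\nabla_\mu=0$ precisely when $\mu\notin\Lambda'$. The hypothesis on $e$ amounts to saying that $\Lambda'$ is upward closed in $(\Lambda,\le)$; equivalently $\Lambda\setminus\Lambda'$ is downward closed. Hence if $\mu\notin\Lambda'$, every composition factor $D_\nu$ of $\Delta_\mu$ satisfies $\nu\le\mu$, so $\nu\notin\Lambda'$ and $eD_\nu=0$; by exactness of $F$ this forces $e\Delta_\mu=0$. Conversely, for $\mu\in\Lambda'$ the module $e\Delta_\mu$ is a nonzero quotient of $eP_\mu$ with simple head $eD_\mu$, and its remaining composition factors are of the form $eD_\nu$ with $\nu<\mu$ and $\nu\in\Lambda'$.

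To conclude, I would apply the exact functor $F$ to the $\Delta$-filtration of $P_\lambda$ provided by Definition~\ref{defi qh}(c), for each $\lambda\in\Lambda'$. Discarding the zero subquotients corresponding to labels outside $\Lambda'$ produces a filtration of $eP_\lambda$ whose top quotient is $e\Delta_\lambda$ and whose remaining subquotients have the form $e\Delta_\mu$ for $\mu\in\Lambda'$ with $\mu>\lambda$. Together with the composition-factor analysis of the previous paragraph, this is precisely what \cite[Proposition A.3.11]{D} requires, and invoking it yields both that $A'$ is quasi-hereditary with respect to $(\Lambda',\le)$ and that the standard $A'$-module at $\lambda$ is $e\Delta_\lambda$. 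The costandard identification $\nabla'_\lambda\cong e\nabla_\lambda$ follows by the dual argument, using Proposition~\ref{prop co standard} and the $\nabla$-filtration of $I_\lambda$.

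The main obstacle is not computational but conceptual: one must check that applying $F$ to the $\Delta$-filtration of $P_\lambda$, after pruning the zero terms, yields a genuine $\Delta'$-filtration indexed compatibly with $(\Lambda',\le)$. The upward-closure of $\Lambda'$ is the single combinatorial fact that makes this work --- it simultaneously prevents any stray composition factor from outside $\Lambda'$ from surviving inside $e\Delta_\mu$ and ensures that every $\Delta_\mu$ with $\mu\in\\Lambda'$ contributes nontrivially to the filtration of $eP_\lambda$.
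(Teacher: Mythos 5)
Your proposal is correct and follows exactly the route the paper indicates: the paper gives no written proof of this proposition, stating only that it is ``an immediate consequence of Green's idempotent condensation theory, \cite[Section~6.2]{Gr}, and \cite[Proposition A.3.11]{D}.'' You have filled in the argument that makes this immediate, namely exactness of $e\cdot(-)$, the observation that the hypothesis is precisely upward-closure of $\Lambda'$, the resulting dichotomy $e\Delta_\mu=0$ for $\mu\notin\Lambda'$ versus $e\Delta_\mu$ having simple head $eD_\mu$ with strictly smaller radical factors for $\mu\in\Lambda'$, and the pruned $\Delta$-filtration of $eP_\lambda$ feeding into the filtration criterion of \cite{D}.
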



\section{Twisted Category Algebras}\label{sec twisted cat}

In the following we recall some properties of twisted category algebras from \cite{BDIII,LS,LS2}.
Throughout this paper we shall choose our notation in accordance with \cite{BDIII}. In particular, we shall use the following notation and situation repeatedly.

\begin{notation}\label{nota twisted cat alg}
(a)\, Let $\catC$ be a finite category, that is, the morphisms in $\catC$ form a finite set $S:=\Mor(\catC)$.
We shall henceforth suppose that $\catC$ is {\it split}, that is, for every morphism $s\in S$, there is some $t\in S$ with $s\circ t\circ s=s$.

Let $k$ be a field. A {\it $2$-cocycle $\alpha$ of $\catC$ with values in $k^\times$} assigns to each pair of morphisms $s,t\in S$ such that $s\circ t$ exists in $S$
an element $\alpha(s,t)\in k^\times$ such that the following holds: whenever $s,t,u\in S$ are such that $u\circ t\circ s$ exists, one has
$\alpha(u\circ t,s)\alpha(u,t)=\alpha(u,t\circ s)\alpha(t,s)$. The {\it twisted category algebra} $k_\alpha\catC$ is the $k$-vector space with $k$-basis $S$ and with multiplication
$$t\cdot s:=\begin{cases} 
\alpha(t,s)\cdot t\circ s &\text{ if } t\circ s \text{ exists,}\\
0 & \text{ otherwise,}
\end{cases}$$
for $s,t\in S$. For the remainder of this section, we set $A:=k_\alpha\catC$. The set of all 2-cocycles of $\catC$ with values in $k^\times$
will be denoted by $Z^2(\catC,k^\times)$. If $\catC'$ is a skeleton of $\catC$ and $\alpha'$ is the restriction of $\alpha$ to $\catC'$ then $k_{\alpha'}\catC'$ is Morita equivalent to $k_\alpha\catC$.

\smallskip
(b)\, Following Green in \cite{Gr1}, one has an equivalence relation $\scrJ$ on $S$ defined by
$$s\scrJ t:\Leftrightarrow S\circ s\circ S=S\circ t\circ S\,,$$
for $s,t\in S$. The corresponding equivalence class of $s$ is denoted by $\scrJ(s)$, and is called a {\it $\scrJ$-class of $\catC$}.
One also has a partial order $\leq_\scrJ$ on the set of $\scrJ$-classes of $\catC$ defined by
\begin{equation}\label{eqn order J-class}
\scrJ(s)\leq_\scrJ \scrJ(t):\Leftrightarrow S\circ \scrJ(s)\circ S\subseteq S\circ \scrJ(t)\circ S\,,
\end{equation}
for $s,t\in S$. 
Note also that $\scrJ(s)\leq_\scrJ\scrJ(t)$ if and only if $S\circ s\circ S\subseteq S\circ t\circ S$.
From now on, let $S_1,\ldots,S_n$ denote the $\scrJ$-classes of $\catC$, ordered such that $S_i<_\scrJ S_j$ implies $i<j$.

Since $\catC$ is split, every $\scrJ$-class $S_i$ of $\catC$ contains an idempotent endomorphism $e_i$, that is,
$e_i\in\End_\catC(X_i)$, for some $X_i\in\Ob(\catC)$, and $e_i\circ e_i=e_i$: one can, for instance, take $e_i:=s_i\circ t_i$, for any $s_i\in S_i$ and any $t_i\in S$ such that $s_i\circ t_i\circ s_i=s_i$. 

Note that 
$$e_i':=\alpha(e_i,e_i)^{-1}\cdot e_i$$
is an idempotent in the algebra $A$, whereas $e_i$ itself is in general not.

\smallskip
(c)\, For $i=1,\ldots,n$, denote by $\Gamma_{e_i}$ the group of units in the monoid $e_i\circ \End_\catC(X_i)\circ e_i$, and set
$J_{e_i}:=(e_i\circ \End_\catC(X_i)\circ e_i)\smallsetminus \Gamma_{e_i}$. The 2-cocycle $\alpha$ restricts to a 2-cocycle of the group $\Gamma_{e_i}$, so that one can regard the twisted group algebra $k_\alpha\Gamma_{e_i}$ as a (non-unitary) subalgebra of $A$. Moreover, for each $i=1,\ldots,n$, one has the following $k$-vector space decomposition of the $k$-algebra $e_i'Ae_i'$:
\begin{equation}\label{eqn eAe}
e_i'Ae_i'= e_iAe_i=k_\alpha\Gamma_{e_i}\oplus kJ_{e_i}\,;
\end{equation}
note that here $k_\alpha\Gamma_{e_i}$ is a unitary subalgebra and $kJ_{e_i}$ is a two-sided ideal of $e_i'Ae_i'$.

\smallskip
(d) In accordance with \cite{BDIII}, we also define 
\begin{equation}\label{eqn J ideals}
S_{\leq i}:= \bigdisj_{j\leq i} S_j\quad\text{ and }\quad    J_i:=kS_{\leq i}\,,
\end{equation}
for $i=1,\ldots,n$, and we set $J_0:=\{0\}$. By \cite[Proposition~3.3]{BDIII}, this yields a chain $J_0\subset J_1\subset \cdots\subset J_n=A$ of two-sided ideals in $A$.
\end{notation}

\begin{remark}\label{rem regular}
In the special case where $\catC$ is a category with one object, one simply recovers the notion of a twisted monoid algebra. 
The property of being split is then usually called {\it regular}, see for instance \cite{GMS}.
\end{remark}

\begin{nothing}\label{noth simple modules}
{\bf Simple modules and standard modules.}\,
For each $i\in\{1,\ldots,n\}$, let $e_i\in S_i$ be an idempotent endomorphism, and let $T_{(i,1)},\ldots,T_{(i,l_i)}$ be representatives of the isomorphism classes of simple $k_\alpha\Gamma_{e_i}$-modules. For $i\in\{1,\ldots,n\}$ and $r\in\{1,\ldots,l_i\}$, denote
by $\tilde{T}_{(i,r)}$ the inflation of $T_{(i,r)}$ to $e_i'Ae_i'$ with respect to the ideal $kJ_{e_i}$ and the decomposition (\ref{eqn eAe}). Consider the $A$-modules 
\begin{equation}\label{eqn Dir}
\Delta_{(i,r)}:=Ae'_i\otimes_{e'_iAe'_i} \tilde{T}_{(i,r)}\quad\text{ and } \quad D_{(i,r)}:=\Hd(\Delta_{(i,r)})\,.
\end{equation}
The isomorphism classes of $\Delta_{(i,r)}$ and $D_{(i,r)}$, respectively, are independent of the choice of the idempotent
$e_i\in S_i$.
\end{nothing}

\begin{theorem}[\protect{\cite[Theorem~1.2]{LS}}]\label{thm LS simple}
The modules $D_{(i,r)}$ ($i=1,\ldots,n,\, r=1,\ldots,l_i$) form a set of representatives of the isomorphism classes of simple $A$-modules.
\end{theorem}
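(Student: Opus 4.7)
The plan is to combine the ideal filtration of Notation~\ref{nota twisted cat alg}(d) with a Schur functor/idempotent condensation argument, following the classical pattern used for (untwisted) semigroup algebras by Munn and Ponizovsky and generalized in \cite{GMS}. Let $D$ be any simple $A$-module. Since $J_0 \subset J_1\subset\cdots\subset J_n=A$ is a chain of two-sided ideals of $A$, there exists a unique index $i\in\{1,\ldots,n\}$ with $J_{i-1}\cdot D=0$ but $J_i\cdot D\ne0$. Thus $D$ factors to a simple module for the quotient algebra $A/J_{i-1}$, and is not annihilated by the image of $J_i$ in this quotient. The first step is therefore to understand the simple modules of $A/J_{i-1}$ on which the class $S_i$ acts nontrivially, for each $i$ separately.

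Next, I would apply Green's idempotent theory to the idempotent $\bar e_i'\in A/J_{i-1}$. Using the decomposition (\ref{eqn eAe}) together with the observation that $kJ_{e_i}\subseteq J_{i-1}$ (since every element of $J_{e_i}$ lies in a $\scrJ$-class strictly below $S_i$, by the splitness of $\catC$ and standard Green's-relations arguments), one obtains a $k$-algebra isomorphism
\[
\bar e_i'\,(A/J_{i-1})\,\bar e_i' \;\cong\; e_i'Ae_i'/kJ_{e_i} \;\cong\; k_\alpha\Gamma_{e_i}\,.
\]
The Schur functor $M\mapsto \bar e_i' M$ then induces a bijection between the isomorphism classes of simple $A/J_{i-1}$-modules $D$ with $\bar e_i' D\neq 0$ and the isomorphism classes of simple $k_\alpha\Gamma_{e_i}$-modules. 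The inverse is precisely the construction in (\ref{eqn Dir}): starting from a simple $k_\alpha\Gamma_{e_i}$-module $T_{(i,r)}$, inflate it to $\tilde T_{(i,r)}$ using (\ref{eqn eAe}), induce up to obtain $\Delta_{(i,r)}=Ae_i'\otimes_{e_i'Ae_i'}\tilde T_{(i,r)}$, and take the head $D_{(i,r)}=\Hd(\Delta_{(i,r)})$, which is simple with $\bar e_i'D_{(i,r)}\cong T_{(i,r)}$ by standard properties of Green's correspondence.

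To conclude, I still need to show that every simple $A$-module $D$ with $J_{i-1}D=0$ and $J_iD\neq0$ automatically satisfies $\bar e_i' D\neq 0$, so that the Schur functor really does see all simple modules at level $i$. For this one uses that, in the split category $\catC$, every morphism $s\in S_i$ can be written, modulo morphisms in strictly smaller $\scrJ$-classes, as a product $u\circ e_i\circ v$ with $u,v\in S$: starting from a retraction $s\circ t\circ s=s$, the idempotent $s\circ t$ is $\scrJ$-equivalent to $e_i$, and a routine computation using the $2$-cocycle identity shows that $A\bar e_i' A + J_{i-1} = J_i$ in $A$. Hence $\bar e_i'D=0$ would force $J_iD\subseteq J_{i-1}D=0$, a contradiction. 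Combined with the Schur functor bijection above, this shows that the $D_{(i,r)}$ form a complete, irredundant set of representatives.

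The main obstacle I expect is the last step, namely the identity $A\bar e_i' A+J_{i-1}=J_i$ in the twisted setting: one has to keep track of the $2$-cocycle $\alpha$ when re-expressing an arbitrary $s\in S_i$ through the chosen idempotent $e_i$, and verify that the required equalities hold on the nose in $A$, not merely up to scalars. Once this is handled (and the independence of the $D_{(i,r)}$ from the choice of $e_i$ is noted, which is routine from Morita-equivalence of the corresponding twisted group algebras), the theorem follows.
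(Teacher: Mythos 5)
The paper gives no proof of this statement; it is quoted verbatim from Linckelmann--Stolorz \cite[Theorem~1.2]{LS}, so there is no internal argument to compare against. Your outline is the standard Clifford--Munn--Ponizovsky approach, adapted to the twisted category setting via the ideal chain $J_0\subset\cdots\subset J_n$ and the Schur functors attached to the idempotents $e_i'$, and this is indeed the route taken in \cite{LS} and its semigroup-theoretic antecedents such as \cite{GMS}. The steps you sketch are sound: $kJ_{e_i}\subseteq J_{i-1}$ because the non-units of the local monoid $e_i\circ\End_\catC(X_i)\circ e_i$ lie in strictly smaller $\scrJ$-classes, which gives $\bar e_i'(A/J_{i-1})\bar e_i'\cong e_i'Ae_i'/kJ_{e_i}\cong k_\alpha\Gamma_{e_i}$; and the identity $Ae_i'A+J_{i-1}=J_i$ does hold---in fact $S_i\subseteq Ae_i'A$ outright. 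To see the latter, for $s\in S_i$ choose $t$ with $s\circ t\circ s=s$; then $e:=s\circ t$ is an idempotent in $S_i$, and by stability (finiteness of $\catC$) it is $\scrD$-equivalent to $e_i$, so one finds $x,w\in S$ with $e=x\circ e_i\circ w$; then $s=e\circ s=x\circ e_i\circ w\circ s$, and since $\alpha$ takes values in $k^\times$ the corresponding product in $A$ differs from $s$ only by a unit, giving $s\in Ae_i'A$. So the 2-cocycle causes no real trouble.

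Two details deserve more care than you give them. First, after establishing that the level-$i$ simple $A$-modules (those with $J_{i-1}D=0$ and $J_iD\neq 0$) biject with $\Irr(k_\alpha\Gamma_{e_i})$ via the Schur functor on $A/J_{i-1}$, you should check that the representative attached to $T_{(i,r)}$ really is the $A$-module $D_{(i,r)}$ of (\ref{eqn Dir}), which is constructed over $A$ rather than $A/J_{i-1}$. This is handled by the natural $A$-linear surjection from $Ae_i'\otimes_{e_i'Ae_i'}\tilde T_{(i,r)}$ onto $\bigl(A/J_{i-1}\bigr)\bar e_i'\otimes_{\bar e_i'(A/J_{i-1})\bar e_i'} T_{(i,r)}$: it is nonzero after applying $e_i'(-)$, hence induces an isomorphism of heads since the source head is simple. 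Second---and this is a genuine gap in your write-up---it is not immediate from the definition that $J_{i-1}D_{(i,r)}=0$, i.e.\ that $D_{(i,r)}$ sits at level exactly $i$; yet this is exactly what you need to conclude that the $D_{(i,r)}$ for different $i$ are pairwise non-isomorphic. The point is that this now drops out of the previous observation: the surjection identifies $D_{(i,r)}$ with the level-$i$ representative, and every simple module has a unique level. With these two additions the argument is complete and delivers both completeness and irredundancy of the list.
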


\begin{remark}\label{rem condense}
Suppose again that $i\in\{1,\ldots,n\}$ and $r\in\{1,\ldots,l_i\}$. By the general theory of idempotent condensation, see 
\cite[Section~6.2]{Gr}, one has the following isomorphisms of $e_i'Ae_i'$-modules:
\begin{equation}\label{eqn id condense}
e'_i\cdot D_{(i,r)}\cong \tilde{T}_{(i,r)}\cong e'_i\cdot \Delta_{(i,r)}\,;
\end{equation}
in particular, the idempotent $e_i'$ of $A$ annihilates every composition factor of $\Rad(\Delta_{(i,r)})$.
\end{remark}

\begin{nothing}\label{noth leq}
{\bf A partial order.}\, By Theorem~\ref{thm LS simple}, the set $\Lambda:=\{(i,r)\mid 1\leq i\leq n,\, 1\leq r\leq l_i\}$ parametrizes the isomorphism classes of simple $A$-modules. Moreover, one has a partial order $\leq$ on $\Lambda$ that is defined as follows:
\begin{equation}\label{eqn leq}
(i,r)< (j,s):\Leftrightarrow S_j<_\scrJ S_i\,,
\end{equation}
for $(i,r),(j,s)\in\Lambda$. With this notation we recall the following result.
\end{nothing}

\begin{theorem}[\protect{\cite[Theorem~4.2]{BDIII}}]\label{thm A qh}
Suppose that the group orders $|\Gamma_{e_1}|,\ldots,|\Gamma_{e_n}|$ are invertible in $k$.
Then the $k$-algebra $A=k_\alpha\catC$ is quasi-hereditary with respect to $(\Lambda,\leq)$. Moreover, for $(i,r)\in\Lambda$, the standard $A$-module labelled by $(i,r)$ is isomorphic to $\Delta_{(i,r)}$.
\end{theorem}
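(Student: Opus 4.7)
The plan is to verify that the chain of two-sided ideals
$$0=J_0\subset J_1\subset\cdots\subset J_n=A$$
from (\ref{eqn J ideals}) is a heredity chain for $A$, in the sense that each quotient $J_i/J_{i-1}$ is a heredity ideal of $\bar{A}_i:=A/J_{i-1}$; quasi-heredity of $A$ with respect to $(\Lambda,\le)$ and the explicit form of the standard modules will then follow from the general heredity-chain formalism.

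Writing $\bar{e}_i'$ for the image of $e_i'$ in $\bar{A}_i$, I would first verify that $\bar{e}_i'$ is an idempotent with $\bar{A}_i\bar{e}_i'\bar{A}_i=J_i/J_{i-1}$: the inclusion $\subseteq$ is immediate since $e_i'\in J_i$, and the reverse follows because every $s\in S_i$ is $\scrJ$-equivalent to $e_i$ and hence lies in $Ae_iA$, so modulo $J_{i-1}$ every generator of $kS_i$ belongs to $\bar{A}_i\bar{e}_i'\bar{A}_i$.

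The core structural step is the identification
$$\bar{e}_i'\bar{A}_i\bar{e}_i'=(e_i'Ae_i')/(e_i'J_{i-1}e_i')\cong k_\alpha\Gamma_{e_i},$$
which via the decomposition (\ref{eqn eAe}) reduces to verifying $e_i'J_{i-1}e_i'=kJ_{e_i}$. On the one hand, any non-unit in the monoid $e_i\circ\End_\catC(X_i)\circ e_i$ factors through a $\scrJ$-class strictly below $S_i$ and hence lies in $J_{i-1}$; on the other hand, the $k_\alpha\Gamma_{e_i}$-component of an element of $J_{i-1}$ must vanish, since elements of $\Gamma_{e_i}$ are $\scrJ$-equivalent to $e_i$ and therefore do not lie in $J_{i-1}$ unless they are zero. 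By the hypothesis that $|\Gamma_{e_i}|$ is invertible in $k$, the twisted group algebra $k_\alpha\Gamma_{e_i}$ is semisimple; combined with the automatic projectivity of $\bar{A}_i\bar{e}_i'$ over $\bar{A}_i$, this shows that $J_i/J_{i-1}$ is a heredity ideal.

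With the heredity chain in place, the general theory attaches to each simple $k_\alpha\Gamma_{e_i}$-module $T_{(i,r)}$ a standard $\bar{A}_i$-module $\bar{A}_i\bar{e}_i'\otimes_{\bar{e}_i'\bar{A}_i\bar{e}_i'}T_{(i,r)}$, which I would identify with $\Delta_{(i,r)}$ from (\ref{eqn Dir}) via the natural surjection $Ae_i'\twoheadrightarrow\bar{A}_i\bar{e}_i'$ and the triviality of the $kJ_{e_i}$-action on $\tilde T_{(i,r)}$. To conclude that quasi-heredity holds with respect to the coarse partial order (\ref{eqn leq}), and not merely with respect to the finer total order induced by the chain, one observes that $Ae_i'$ is contained in the span of morphisms whose $\scrJ$-class is $\le_\scrJ S_i$; hence the filtration of the projective summand $P_{(i,r)}$ obtained by intersecting with the chain has top $\Delta_{(i,r)}$ and all remaining factors of the form $\Delta_{(m,s)}$ with $S_m<_\scrJ S_i$, which is precisely the relation $(m,s)>(i,r)$ required by (\ref{eqn leq}).

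The main obstacle is the identification $e_i'J_{i-1}e_i'=kJ_{e_i}$: this is the one place where the purely monoid-theoretic notion of being a non-unit in $e_i\circ\End_\catC(X_i)\circ e_i$ must be translated into the algebraic condition of lying in $J_{i-1}$, and the argument makes essential use of the split hypothesis on $\catC$. Once this identification is secured, the remainder of the argument is standard heredity-chain bookkeeping.
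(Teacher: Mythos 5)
Your proposal follows the heredity-chain route rather than the route taken in \cite{BDIII}: there, one filters each projective cover $P_{(i,r)}=A\ftilde_{(i,r)}$ by the chain $J_0\ftilde_{(i,r)}\subseteq\cdots\subseteq J_i\ftilde_{(i,r)}=P_{(i,r)}$, identifies the top quotient with $\Delta_{(i,r)}$, and uses \cite[Lemma~4.4]{BDIII} to show the lower quotients are direct sums of $\Delta_{(j,s)}$'s, thereby verifying Definition~\ref{defi qh} directly. Your alternative — proving that $0=J_0\subset J_1\subset\cdots\subset J_n=A$ is a heredity chain — is the approach of Linckelmann and Stolorz in \cite{LS2}, and in principle it works and yields the same standard modules and the same compatibility with the coarse order $\le$. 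The identification $e_i'J_{i-1}e_i'=kJ_{e_i}$, which you flag as the main obstacle, is indeed correct and is the right use of the split hypothesis.

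However, there is a genuine gap in the heredity-ideal step. You argue that semisimplicity of $\bar{e}_i'\bar{A}_i\bar{e}_i'\cong k_\alpha\Gamma_{e_i}$, ``combined with the automatic projectivity of $\bar{A}_i\bar{e}_i'$ over $\bar{A}_i$,'' shows $J_i/J_{i-1}$ is a heredity ideal. This conflates the projective $\bar{A}_i$-module $\bar{A}_i\bar{e}_i'$ (trivially projective, being a direct summand of $\bar{A}_i$) with the ideal $J_i/J_{i-1}=\bar{A}_i\bar{e}_i'\bar{A}_i$. Semisimplicity of $eBe$ gives $J^2=J$ and $J\Rad(\bar{A}_i)J=0$ for free, but the third defining condition — that $J_i/J_{i-1}$ be projective as a left $\bar{A}_i$-module, equivalently (by \cite{DR}) that the multiplication map $\bar{A}_i\bar{e}_i'\otimes_{\bar{e}_i'\bar{A}_i\bar{e}_i'}\bar{e}_i'\bar{A}_i\to J_i/J_{i-1}$ be \emph{injective} — does not follow and is the real crux. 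Verifying it requires analyzing the $\scrL$- and $\scrR$-class structure inside the $\scrJ$-class $S_i$ (a Rees-type decomposition) and showing that the resulting ``sandwich'' pairing is non-degenerate, which is precisely where regularity of the monoid of morphisms and semisimplicity of $k_\alpha\Gamma_{e_i}$ interact non-trivially. Without this step the argument does not establish quasi-heredity; with it, your route is sound and essentially recovers the \cite{LS2} proof.
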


An independent proof of the quasi-heredity of $k_\alpha\catC$ can be found in \cite[Corollary~1.2]{LS2}.


\section{Main Theorem}\label{sec main}

Throughout this section, we retain the situation and notation from \ref{nota twisted cat alg} and \ref{noth simple modules}. Additionally, we assume that, for each $i=1,\ldots,n$, the order of $\Gamma_{e_i}$ is invertible in $k$, so that
the twisted group algebra $k_\alpha\Gamma_{e_i}$ is semisimple. By Theorem~\ref{thm A qh}, $A=k_\alpha\catC$ is quasi-hereditary with simple modules $D_{(i,r)}$ and standard modules $\Delta_{(i,r)}$, $(i,r)\in\Lambda$, where $\Lambda$ is endowed with the partial order $\le$ from \ref{noth leq}. 
 
 \smallskip
In the following we shall introduce a new partial order $\unlhd$ on $\Lambda$ such that the partial order $\leq$  defined in \ref{noth leq} is a refinement of $\unlhd$. We aim to show that $A$ is also quasi-hereditary with respect to $(\Lambda,\unlhd)$, and that the standard  and costandard modules do not change. 

\begin{definition}\label{defi new order}
For $(i,r),(j,s)\in\Lambda$, let $f_{(i,r)}\in k_\alpha\Gamma_{e_i}$ and $f_{(j,s)}\in k_\alpha\Gamma_{e_j}$ be 
the block idempotents corresponding to the simple modules $T_{(i,r)}$ and $T_{(j,s)}$, respectively. We
set
\begin{align*}
(i,r)\sqsubset (j,s):\Leftrightarrow & \text{ (i) } S_j<_{\scrJ} S_i, \text{ and }\\
                                                           & \text{ (ii) }  f_{(i,r)}\cdot J_{j}\cdot f_{(j,s)}\not\subseteq J_{j-1} \text{ or } f_{(j,s)}\cdot J_{j}\cdot f_{(i,r)}\not\subseteq J_{j-1}\,.
\end{align*}
Here $J_{j-1}\subset J_j$ are the two-sided ideals of $A$ defined in (\ref{eqn J ideals}).
This defines a reflexive, anti-symmetric relation $\sqsubseteq$ on $\Lambda$.
The transitive closure of the relation $\sqsubseteq$ on $\Lambda$ will be denoted by $\unlhd$. This is again a partial order on $\Lambda$.
\end{definition}

\begin{remark}\label{rem transitive}
(a)\, Note that the partial order $\leq $ on $\Lambda$ defined
in (\ref{eqn leq}) is indeed a refinement of the new partial order $\unlhd$.

\smallskip
(b)\, One can show that the partial order $\unlhd$ on $\Lambda$ does neither depend on the choice of the idempotent $e_i$ in $S_i$ nor on the chosen total order $S_1, S_2,\ldots, S_n$ of the $\scrJ$-classes.

\smallskip
(c)\, We emphasize that the relation $\sqsubseteq$ in Defintion~\ref{defi new order} is in general not transitive; we shall give explicit examples later in Section~\ref{sec bisets} and Section~\ref{sec brauer}.
The condition in Definition~\ref{defi new order}(ii) can be reformulated, and can often be simplified; see Sections~\ref{sec sqsubseteq} and \ref{sec duality}. In Section~\ref{sec duality} we shall, in particular, establish criteria for the conditions $f_{(i,r)}\cdot J_{j}\cdot f_{(j,s)}\not\subseteq J_{j-1}$ and
$ f_{(j,s)}\cdot J_{j}\cdot f_{(i,r)}\not\subseteq J_{j-1}$ to be equivalent.
\end{remark}

We are now prepared to state and prove our main result:

\begin{theorem}\label{thm main}
Suppose that the group orders $|\Gamma_{e_1}|,\ldots,|\Gamma_{e_n}|$ are invertible in $k$.
Then the twisted category algebra $A=k_\alpha\catC$ is quasi-hereditary with respect to $(\Lambda,\unlhd)$. Moreover, the $A$-modules $\Delta_{(i,r)}$ ($(i,r)\in\Lambda$), as defined in (\ref{eqn Dir}), are the corresponding standard modules.
\end{theorem}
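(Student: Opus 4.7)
My strategy is to leverage the quasi-hereditary structure of $A$ with respect to the finer order $\leq$ established in Theorem~\ref{thm A qh}, and to show that the very same standard modules $\Delta_{(i,r)}$ and the very same $\Delta$-filtrations of projective covers already witness quasi-heredity with respect to the coarser order $\unlhd$. The whole theorem reduces, via BGG reciprocity for quasi-hereditary algebras, to the following two halves of a single technical statement:

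\medskip
\noindent\emph{Key Lemma (left half).}\, \emph{If $D_{(j,s)}$ is a composition factor of $\Rad(\Delta_{(i,r)})$, then $f_{(j,s)}\cdot J_i\cdot f_{(i,r)}\not\subseteq J_{i-1}$.}

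\smallskip
\noindent\emph{Key Lemma (right half).}\, \emph{If $D_{(j,s)}$ is a composition factor of $\nabla_{(i,r)}/\Soc(\nabla_{(i,r)})$, then $f_{(i,r)}\cdot J_i\cdot f_{(j,s)}\not\subseteq J_{i-1}$.}
\medskip

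Either half forces condition (ii) of Definition~\ref{defi new order} and, together with the automatic condition (i) $S_i<_\scrJ S_j$, gives $(j,s)\sqsubset(i,r)$. Granted both halves, the left half implies that $\Delta_{(i,r)}=\Delta_{(i,r)}^{\leq}$ already satisfies the defining maximality property of the $\unlhd$-standard module (Definition~\ref{defi qh}(a)); combined with the automatic fact that $\Delta_{(i,r)}^{\unlhd}$ is a quotient of $\Delta_{(i,r)}^{\leq}$ (since $\unlhd$ is coarser than $\leq$), the two standard modules must coincide. For the filtration, BGG reciprocity gives $(P_{(i,r)}:\Delta_\mu)=[\nabla_\mu:D_{(i,r)}]$, so every $\Delta_\mu$ with $\mu\neq(i,r)$ appearing in the $\Delta$-filtration has $D_{(i,r)}$ as a composition factor of $\nabla_\mu$ above its socle; the right half then forces $(i,r)\sqsubset\mu$, in particular $(i,r)\unlhd\mu$ strictly, and the existing $\leq$-filtration of $P_{(i,r)}$ already realises $\unlhd$-quasi-heredity.

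It remains to prove the two halves. For the left half, condition (i) follows from $(j,s)<(i,r)$. For the containment in (ii), observe that $e_i'\in J_i$ and $J_i$ is a two-sided ideal, so $Ae_i'\subseteq J_i$ and hence
\[
f_{(j,s)}\cdot A\cdot f_{(i,r)} \;=\; f_{(j,s)}\cdot (Ae_i')\cdot f_{(i,r)} \;=\; f_{(j,s)}\cdot J_i\cdot f_{(i,r)}.
\]
Unpacking the definition (\ref{eqn Dir}) of $\Delta_{(i,r)}$ and using $\tilde T_{(i,r)}=f_{(i,r)}\tilde T_{(i,r)}$ yields
\[
f_{(j,s)}\cdot\Delta_{(i,r)}\;\cong\;f_{(j,s)}Af_{(i,r)}\otimes_{f_{(i,r)}Af_{(i,r)}}T_{(i,r)},
\]
and this is nonzero by Remark~\ref{rem condense}, since $D_{(j,s)}$ is a composition factor of $\Delta_{(i,r)}$. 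Because $T_{(i,r)}$ is inflated through the semisimple quotient $e_i'Ae_i'/kJ_{e_i}\cong k_\alpha\Gamma_{e_i}$, the action of $f_{(i,r)}Af_{(i,r)}$ on $T_{(i,r)}$ factors through its quotient by $f_{(i,r)}kJ_{e_i}f_{(i,r)}$, and nonvanishing of the above tensor product therefore forces $f_{(j,s)}Af_{(i,r)}\not\subseteq f_{(j,s)}Af_{(i,r)}\cdot f_{(i,r)}kJ_{e_i}f_{(i,r)}$. Finally, elements of $J_{e_i}$ are non-units of the local monoid $e_i\circ\End_\catC(X_i)\circ e_i$ and hence lie in $\scrJ$-classes strictly below $S_i$, so $kJ_{e_i}\subseteq J_{i-1}$ and the right-hand side of the above non-inclusion sits inside $J_{i-1}$. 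This produces the required element of $f_{(j,s)}J_if_{(i,r)}$ outside $J_{i-1}$. The right half is entirely analogous, using the coinduced description $\nabla_{(i,r)}\cong\Hom_{e_i'Ae_i'}(e_i'A,\tilde T_{(i,r)})$, or equivalently invoking the left--right duality of \ref{noth duals}(a) and applying the left half to the opposite algebra (which is itself a twisted category algebra of the same type).

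\emph{Main obstacle.} The conceptually delicate step is the passage from nonvanishing of the condensed module $f_{(j,s)}\Delta_{(i,r)}$ to the non-containment of $f_{(j,s)}Af_{(i,r)}$ in the right ideal generated by the nilpotent ideal $kJ_{e_i}\subseteq e_i'Ae_i'$. This rests on identifying the kernel of the $f_{(i,r)}Af_{(i,r)}$-action on the simple module $T_{(i,r)}$ as precisely $f_{(i,r)}kJ_{e_i}f_{(i,r)}$. Once that is in place, the inclusion $kJ_{e_i}\subseteq J_{i-1}$ is visible directly from the $\scrJ$-class order on $S$, and the $\leq$-filtration furnished by Theorem~\ref{thm A qh} carries over verbatim to $\unlhd$ without any further filtration engineering.
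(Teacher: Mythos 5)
Your architecture is attractive: you split the work into a self-contained Key Lemma about the conditions in Definition~\ref{defi new order}, use the left half to identify the $\unlhd$-standard modules, and replace the paper's direct analysis of the chain $J_j\cdot\tilde f_{(i,r)}$ with BGG reciprocity plus the right half to handle the $\Delta$-filtration. That replacement is legitimate and is genuinely different from (arguably slicker than) what the paper does, which re-runs the filtration argument from \cite[Theorem~4.2]{BDIII}.

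However, the proof of the Key Lemma has a real logical gap at the very last step. Writing $N:=f_{(j,s)}Af_{(i,r)}$ and $I:=f_{(i,r)}\cdot kJ_{e_i}\cdot f_{(i,r)}$, what you have correctly established (granting the identification of $f_{(j,s)}\Delta_{(i,r)}$ with a tensor product over $f_{(i,r)}Af_{(i,r)}$) is
\begin{equation*}
N\not\subseteq NI \qquad\text{and}\qquad NI\subseteq J_{i-1}\,.
\end{equation*}
You then assert that ``this produces the required element of $f_{(j,s)}J_if_{(i,r)}$ outside $J_{i-1}$.'' But from $N\not\subseteq NI$ and $NI\subseteq J_{i-1}$ one cannot conclude $N\not\subseteq J_{i-1}$: the set $J_{i-1}$ is larger than $NI$, and nothing you have said rules out $NI\subsetneq N\subseteq J_{i-1}$. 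To close this, you would need the reverse containment $N\cap J_{i-1}\subseteq NI$ --- equivalently, that $J_{i-1}\cdot f_{(i,r)}\subseteq A\cdot kJ_{e_i}\cdot f_{(i,r)}$, which amounts to $J_{i-1}\cdot\tilde f_{(i,r)}=A\cdot kJ_{e_i}\cdot\tilde f_{(i,r)}$ for the primitive idempotent $\tilde f_{(i,r)}$. That equality is precisely the nontrivial structural input established in \cite[Theorem~4.2]{BDIII} via the isomorphism $\Delta_{(i,r)}\cong (J_i/J_{i-1})\cdot\tilde f_{(i,r)}$, and the paper invokes this isomorphism directly to pass from $f_{(l,t)}\cdot\Delta_{(i,r)}\neq\{0\}$ to $f_{(l,t)}\cdot J_i\cdot f_{(i,r)}\not\subseteq J_{i-1}$. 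Your argument in effect tries to bypass this step, and that is where it breaks; note also that the missing containment is exactly where the hypothesis that $\catC$ is split enters, so an argument that never uses it should make you suspicious. Once you supply $\Delta_{(i,r)}\cong(J_i/J_{i-1})\cdot\tilde f_{(i,r)}$ (or the equivalent kernel computation) the rest of your scheme goes through, including the BGG-reciprocity treatment of the filtration.
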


\begin{proof}
We shall use Definition~\ref{defi qh} and proceed as in \cite{BDIII}. That is, we shall show that, for each $(i,r)\in\Lambda$,
the $A$-module $\Delta_{(i,r)}$ satisfies (i) and (ii) below, and that the projective $A$-module $P_{(i,r)}$ admits a filtration
$$0=P_{(i,r)}^{(0)}\subset\cdots\subset P_{(i,r)}^{(m_{ir})}=P_{(i,r)}$$
satisfying (iii) and (iv) below:

\medskip
(i)\, $\Hd(\Delta_{(i,r)})\cong D_{(i,r)}$;

\smallskip
(ii)\, $[\Rad(\Delta_{(i,r)}):D_{(l,t)}]\neq 0 \Rightarrow (l,t)\lhd (i,r)$;

\smallskip
(iii)\, $P_{(i,r)}^{(m_{ir})}/P_{(i,r)}^{(m_{ir}-1)}\cong \Delta_{(i,r)}$;

\smallskip
(iv)\, $1\leq q<m_{(i,r)} \Rightarrow P_{(i,r)}^{(q)}/P_{(i,r)}^{(q-1)}\cong \Delta_{(j,s)}$, for some $(j,s)\in\Lambda$ with $(i,r)\lhd (j,s)$.

\medskip
Condition~(i) has already been verified in the proof of \cite[Theorem~4.2]{BDIII}. To show (ii), let $(i,r), (l,t)\in\Lambda$ be such
$[\Rad(\Delta_{(i,r)}):D_{(l,t)}]\neq 0$. Since $\Delta_{(i,r)}$ is the standard $A$-module (see Theorem~\ref{thm A qh})with respect to $(\Lambda,\leq)$ labelled by $(i,r)$, we already know that $S_i<_\scrJ S_l$. We shall show that $f_{(l,t)}\cdot J_i\cdot f_{(i,r)}\not\subseteq J_{i-1}$, so that
$(l,t)\sqsubset (i,r)$, and thus $(l,t)\lhd (i,r)$.

Recall from (\ref{eqn id condense}) that $e_l'\cdot D_{(l,t)}\cong e_l'\cdot\Delta_{(l,t)}\cong \tilde{T}_{(l,t)}$ as $e_l'Ae_l'$-modules. Moreover, note that
$f_{(i,r)}\cdot\tilde{T}_{(i,r)}=\tilde{T}_{(i,r)}$, since $f_{(i,r)}$ is the block idempotent of $k_\alpha\Gamma_{e_i}$ corresponding to the simple module $T_{(i,r)}$ and $\tilde{T}_{(i,r)}$  is just the inflation of $T_{(i,r)}$ to $e_i'Ae_i'$. Analogously, we have $f_{(l,t)}\cdot \tilde{T}_{(l,t)}=\tilde{T}_{(l,t)}$. This implies, in particular, that $f_{(l,t)}\cdot D_{(l,t)}=f_{(l,t)}e_l'\cdot D_{(l,t)}\neq \{0\}$, and thus also $f_{(l,t)}\cdot \Delta_{(i,r)}\neq \{0\}$, since
we are assuming $[\Delta_{(i,r)}:D_{(l,t)}]\neq 0$ and since multiplication by $f_{(l,t)}$ is exact.

Now, writing $f_{(i,r)}$ as a sum of pairwise orthogonal primitive idempotents in $e_i'Ae_i'$, there is a summand $\tilde{f}_{(i,r)}$ in this decomposition such that $\tilde{f}_{(i,r)}\cdot \tilde{T}_{(i,r)}\neq \{0\}$. By \cite[Theorem~1.8.10]{NT}, this in turn implies that the $e_i'Ae_i'$-module $e_i'Ae_i'\cdot \tilde{f}_{(i,r)}$ is a projective cover of $\tilde{T}_{(i,r)}$. Thus, as we have seen in the proof of \cite[Theorem~4.2]{BDIII}, the $A$-module $A\cdot \tilde{f}_{(i,r)}$ is a projective cover of $D_{(i,r)}$, so that we may from now on suppose that $A\cdot \tilde{f}_{(i,r)}=P_{(i,r)}$.

Next consider the following chain of $A$-modules from \cite[(12)]{BDIII}:
\begin{equation}\label{eqn BDIII chain}
\{0\}=J_0\cdot \tilde{f}_{(i,r)}\subseteq J_1\cdot \tilde{f}_{(i,r)}\subseteq\cdots \subseteq J_{i-1}\cdot \tilde{f}_{(i,r)}\subseteq J_i\cdot \tilde{f}_{(i,r)}=A\cdot\tilde{f}_{(i,r)}=P_{(i,r)}\,.
\end{equation}
As we have shown in the proof of \cite[Theorem~ 4.2]{BDIII}, there is an $A$-module isomorphism
$$\Delta_{(i,r)}\cong J_i\cdot \tilde{f}_{(i,r)}/J_{i-1}\cdot \tilde{f}_{(i,r)}=(J_i/J_{i-1})\cdot \tilde{f}_{(i,r)}\,.$$
Consequently, we have now proved that
$$\{0\}\neq f_{(l,t)}\cdot (J_i/J_{i-1})\cdot \tilde{f}_{(i,r)}= f_{(l,t)}\cdot (J_i/J_{i-1})\cdot f_{(i,r)}\cdot \tilde{f}_{(i,r)}\,,$$
and therefore $f_{(l,t)}\cdot J_i\cdot f_{(i,r)}\not\subseteq J_{i-1}$, implying
$(l,t)\sqsubset (i,r)$, and thus also $(l,t)\lhd (i,r)$, as desired. This settles the proof of (ii).

\medskip

It remains to verify conditions (iii) and (iv). To this end, we consider the chain (\ref{eqn BDIII chain}) of $A$-modules again. 
Since we already know that $\Delta_{(i,r)}\cong J_i\cdot \tilde{f}_{(i,r)}/J_{i-1}\cdot \tilde{f}_{(i,r)}$, it suffices to show that
(\ref{eqn BDIII chain}) also satisfies (iv). We argue along the lines of the proof of \cite[Theorem~4.2]{BDIII}: if $j\in\{1,\ldots,i-1\}$ is such that $S_j\not<_\scrJ S_i$ then $J_j\cdot \tilde{f}_{(i,r)}=J_{j-1}\cdot \tilde{f}_{(i,r)}$. So suppose that $j\in\{1,\ldots,i-1\}$ is such that $S_j<_\scrJ S_i$, and let $M$ be an indecomposable direct summand of the $A$-module $J_j\cdot \tilde{f}_{(i,r)}/J_{j-1}\cdot \tilde{f}_{(i,r)}$. Then, by \cite[Lemma~4.4]{BDIII}, there is some $s\in\{1,\ldots,l_j\}$ with $M\cong\Delta_{(j,s)}$. Arguing as above, we deduce
$f_{(j,s)}\cdot \tilde{T}_{(j,s)}=\tilde{T}_{(j,s)}\neq \{0\}$, and $e_j'\cdot \Delta_{(j,s)}\cong \tilde{T}_{(j,s)}$ as $e_j'Ae_j'$-modules. Thus
$f_{(j,s)}\cdot \Delta_{(j,s)}\neq \{0\}$, and
$$\{0\}\neq f_{(j,s)}\cdot (J_j/J_{j-1})\cdot \tilde{f}_{(i,r)}=f_{(j,s)}\cdot (J_j/J_{j-1})\cdot f_{(i,r)}\cdot \tilde{f}_{(i,r)}\,,$$
so that $f_{(j,s)}\cdot J_j\cdot f_{(i,r)}\not\subseteq J_{j-1}$. Since we are assuming $S_j<_\scrJ S_i$, this implies $(i,r)\sqsubset (j,s)$, hence also $(i,r)\lhd (j,s)$, proving (iv).
\end{proof}

\begin{nothing}\label{noth loose ends}
{\bf Costandard modules.}\, 
At the end of this section we should like to tie up some loose ends and comment on the costandard $A$-modules with respect to the
partial orders $\leq$ and $\unlhd$ on $\Lambda$.

To this end, first note that if $B$ is any finite-dimensional $k$-algebra and $f$ is a primitive idempotent of $B$ then
\begin{equation}\label{eqn left right simples}
  \Hd(Bf)^*\cong \Hd(fB) \text{ as right $B$-modules and } \Hd(fB)^*\cong \Hd(Bf) \text{ as left $B$-modules.}
\end{equation}
In other words, if $D$ is a simple left $B$-module belonging to a particular Wedderburn component of $B/J(B)$ and if $D'$ denotes the simple right $B$-module belonging to the same Wedderburn component, then $D'\cong D^*$ as right $B$-modules.

Next, observe that everything that was proven in Theorem~\ref{thm main} for left modules over the twisted split category algebra $A$ can also be proved for right $A$-modules. More precisely, for $(i,r)\in\Lambda$, consider the simple right $k_\alpha\Gamma_{e_i}$-module $T'_{(i,r)}$ belonging to the same Wedderburn component as the simple left $k_\alpha\Gamma_{e_i}$-module $T_{(i,r)}$ and let $\Ttilde'_{(i,r)}$ denote the inflation of $T'_{(i,r)}$ to $e'_iAe'_i$ with respect to the decomposition $e'_iAe'_i=k_\alpha\Gamma_{e_i}\oplus kJ_{e_i}$. Moreover, define the right $A$-modules
\begin{equation}\label{eqn Delta'}
  \Delta'_{(i,r)}:=\Ttilde'_{(i,r)}\otimes_{e'_iAe'_i} e'_iA \quad\text{and}\quad D'_{(i,r)}:=\Hd(\Delta'_{(i,r)})\,.
\end{equation}
Then, in analogy to \cite[Theorem~1.2]{LS}, the modules $D'_{(i,r)}$, $(i,r)\in\Lambda$, form a complete set of representatives of the isomorphism classes of simple right $A$-modules. Moreover, with respect to both the partial orders $\leq$ and $\unlhd$ on $\Lambda$, the standard right $A$-module labelled by $(i,r)\in\Lambda$ is given by $\Delta'_{(i,r)}$.
As well, $A$ is quasi-hereditary with respect to these partial orders in the formulation for right modules, in a symmetric sense to Definition~\ref{defi qh}. 

\smallskip
Note also that applying (\ref{eqn left right simples}) to the algebras $B=k_{\alpha}\Gamma_{e_i}$ and to $B=e_i'Ae_i'$, respectively, shows that
\begin{equation}\label{eqn T'=T^*}
  T'_{(i,r)}\cong T^*_{(i,r)} \text{ as right $k_\alpha\Gamma_{e_i}$-modules, and } 
  \Ttilde'_{(i,r)}\cong (\Ttilde_{(i,r)})^* \text{ as right $e_i'Ae_i'$-modules.}
\end{equation}
Note also that $(\Ttilde_{(i,r)})^*=\widetilde{T_{(i,r)}^*}$ as right $e_i'Ae_i'$-module, so that we may simply denote this module by 
$\Ttilde_{(i,r)}^*$, to avoid too many symbols.

Finally, assume that $\ftilde_{(i,r)}$ is a primitive idempotent of $e_i'Ae_i'$ occurring in the decomposition of the block
idempotent $f_{(i,r)}$ with $\ftilde_{(i,r)} \Ttilde_{(i,r)}\neq\{0\}$. Then also $\widetilde{T'}_{(i,r)}\ftilde_{(i,r)}\neq \{0\}$. From \cite[Theorem~4.2]{BDIII} we know that $A\ftilde_{(i,r)}$ is a projective cover of $D_{(i,r)}$, and, arguing completely analogously with right modules instead of left modules, we also deduce that $\ftilde_{(i,r)}A$ is a projective cover of $D'_{(i,r)}$. Thus, by (\ref{eqn left right simples}), we obtain
\begin{equation}\label{eqn D'=D^*}
  D'_{(i,r)}\cong D_{(i,r)}^* \text{ as right $A$-modules.}
\end{equation}
\end{nothing}

Consequently, we have:

\begin{corollary}\label{cor main}
Suppose that the group orders $|\Gamma_{e_1}|$, \ldots, $|\Gamma_{e_n}|$ are invertible in $k$. For $(i,r)\in\Lambda$, the corresponding standard right $A$-module, with respect to both $\leq$ and $\unlhd$, is given by $\Delta'_{(i,r)}$ in (\ref{eqn Delta'}). Furthermore, with respect to both $\leq$ and $\unlhd$, the costandard left $A$-modules $\nabla_{(i,r)}$ and the costandard right $A$-modules $\nabla'_{(i,r)}$ are given by 
\begin{equation*}
  \nabla_{(i,r)}\cong\Hom_{e'_iAe'_i}(e'_iA,\Ttilde_{(i,r)}) \quad\text{and}\quad 
  \nabla'_{(i,r)}\cong\Hom_{e'_iAe'_i}(Ae'_i,\Ttilde_{(i,r)}^*)\,.
\end{equation*}
Moreover,
\begin{equation*}
  \Delta'_{(i,r)}\cong \nabla_{(i,r)}^*\quad \text{and}\quad \nabla'_{(i,r)}\cong\Delta_{(i,r)}^*
\end{equation*}
as right $A$-modules.
\end{corollary}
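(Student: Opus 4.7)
The plan is to deduce every assertion from Theorem~\ref{thm main} together with the duality principle recalled in \ref{noth duals}(a). First, I would establish the right-handed analogue of Theorem~\ref{thm main}: running the proof of Theorem~\ref{thm main} with left and right modules interchanged gives that $A$ is also quasi-hereditary as a right module, with respect to both $\le$ and $\unlhd$, with standard right $A$-modules $\Delta'_{(i,r)}$. The identification $D'_{(i,r)}\cong D_{(i,r)}^*$ in (\ref{eqn D'=D^*}) ensures that the labelling of simple right $A$-modules agrees with that of simple left $A$-modules, so the partial orders act on the same set $\Lambda$ in both formulations. This step was already indicated in \ref{noth loose ends} and requires no new ideas.

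Given quasi-heredity on both sides, \ref{noth duals}(a) applies with respect to both $\le$ and $\unlhd$, yielding the $A$-module isomorphisms
\[
  \nabla'_{(i,r)}\cong\Delta_{(i,r)}^*\quad\text{and}\quad\nabla_{(i,r)}\cong(\Delta'_{(i,r)})^*\,,
\]
which are the last two displayed isomorphisms of the corollary.

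To rewrite these duals as the claimed Hom spaces, I would apply tensor-hom adjunction to the definitions $\Delta_{(i,r)}=Ae'_i\otimes_{e'_iAe'_i}\Ttilde_{(i,r)}$ and $\Delta'_{(i,r)}=\Ttilde'_{(i,r)}\otimes_{e'_iAe'_i}e'_iA$. Adjunction produces
\[
  \Delta_{(i,r)}^*\cong\Hom_{e'_iAe'_i}(Ae'_i,\Ttilde_{(i,r)}^*)\,,\qquad (\Delta'_{(i,r)})^*\cong\Hom_{e'_iAe'_i}(e'_iA,(\Ttilde'_{(i,r)})^*)\,,
\]
and the isomorphism $(\Ttilde'_{(i,r)})^*\cong\Ttilde_{(i,r)}$ derived from (\ref{eqn T'=T^*}) simplifies the latter Hom space to $\Hom_{e'_iAe'_i}(e'_iA,\Ttilde_{(i,r)})$, as required.

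No genuine obstacle arises: the only bookkeeping needed is to track the two-sided bimodule structures on $Ae'_i$ and $e'_iA$ so that each resulting Hom space inherits the correct one-sided $A$-action, and to verify that the labels of simple modules on both sides match via (\ref{eqn D'=D^*}).
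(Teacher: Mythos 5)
Your proof is correct and follows essentially the same route as the paper: the first claim is taken from \ref{noth loose ends}, the dual standard/costandard identifications are obtained from \ref{noth duals}(a) (the paper spells out this same argument inline via the projective cover $I_{(i,r)}^*$ of $D'_{(i,r)}$ rather than citing \ref{noth duals}(a) directly), and the Hom-space descriptions follow from tensor-hom adjunction combined with $(\Ttilde'_{(i,r)})^*\cong\Ttilde_{(i,r)}$ from (\ref{eqn T'=T^*}). The only cosmetic difference is that you use the form of the adjunction yielding $\Hom_{e'_iAe'_i}(Ae'_i,\Ttilde_{(i,r)}^*)$ for $\Delta_{(i,r)}^*$, while the paper phrases it as $\Hom_{e'_iAe'_i}(e'_iA,\Hom_k(\Ttilde'_{(i,r)},k))$ for $(\Delta'_{(i,r)})^*$; these are the two equivalent sides of the same adjunction.
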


\begin{proof}
The first statement has already been derived in \ref{noth loose ends}. 
Now let $(i,r)\in\Lambda$. Again, by the considerations in \ref{noth loose ends}, the module $\Delta'_{(i,r)}$ is the right standard $A$-modules with respect to $\leq$ and  $\unlhd$. Let $I_{(i,r)}$ denote an injective envelope of the simple left $A$-module $D_{(i,r)}$. Then $I_{(i,r)}^*$ is a projective cover of the simple right $A$-module $D_{(i,r)}^*$, and thus of $D'_{(i,r)}$, by (\ref{eqn D'=D^*}). Therefore, $\nabla_{(i,r)}^*$ is a quotient of $I^*_{(i,r)}$ such that each composition factor of $\Rad(\nabla_{(i,r)}^*)$ is of the form $D_{(j,s)}^*\cong D'_{(j,s)}$ with $(j,s)\lhd (i,r)$, and $\nabla_{(i,r)}^*$ is the largest such quotient. For otherwise, by taking duals again, $I_{(i,r)}$ would have a submodule $M$ strictly containing $\nabla_{(i,r)}$ such that  the composition factors of $M/\Soc(M)$ are of the form $D_{(j,s)}$ with $(j,s)\lhd (i,r)$, which is not the case. Therefore, both $\Delta'_{(i,r)}$ and $\nabla_{(i,r)}^*$ are standard right $A$-modules with respect to $\leq$ and $\unlhd$, with head isomorphic to $D_{(i,r)}'\cong D_{(i,r)}^*$. But this implies $\Delta'_{(i,r)}\cong \nabla_{(i,r)}^*$. Similarly, one shows that $\Delta_{(i,r)}=(\nabla'_{(i,r)})^*$.

Finally, by the usual adjunction isomorphism and (\ref{eqn T'=T^*}), we obtain that
\begin{align*}
  \nabla_{(i,r)}&\cong(\Delta'_{(i,r)})^*= \Hom_k(\Ttilde'_{(i,r)}\otimes_{e'_iAe'_i} e'_iA, k) \cong 
  \Hom_{e'_iAe'_i}(e'_iA,\Hom_k(\Ttilde'_{(i,r)},k))\\
                        & \cong \Hom_{e'_iAe'_i}(e'_iA,\Ttilde_{(i,r)})
\end{align*}
as left $A$-modules, and similarly one obtains an isomorphism $\nabla'_{(i,r)}\cong\Hom_{e'_iAe'_i}(Ae'_i,\Ttilde_{(i,r)}^*)$ of right $A$-modules.
\end{proof}


\section{Reformulations of the relation $\sqsubseteq$}\label{sec sqsubseteq}

We retain the notation from Section~\ref{sec main}. Thus, we assume the notation and situation from \ref{nota twisted cat alg} and \ref{noth simple modules}, and also assume that, for every idempotent  endomorphism $e$ in $\catC$, the group order $|\Gamma_e|$ is invertible in $k$. Thus, the corresponding twisted group algebra $k_\alpha\Gamma_e$ will then again be semisimple.
For every $(i,r)\in\Lambda$, let $f_{(i,r)}$ denote the primitive central idempotent of $k_\alpha\Gamma_{e_i}$ satisfying 
$f_{(i,r)}T_{(i,r)}\neq \{0\}$, or equivalently, $T_{(i,r)}^*f_{(i,r)}\neq\{0\}$. The following proposition gives equivalent reformulations of one of the two conditions in Definition~\ref{defi new order}(ii), concerning the relation $\sqsubseteq$ and the resulting partial order $\unlhd$ on $\Lambda$.

\begin{proposition}\label{prop cond (ii)}
Let $i,j\in\{1,\ldots,n\}$ be such that $S_j<_\scrJ S_i$. Then, for $(i,r),(j,s)\in\Lambda$, the following are equivalent:

\smallskip
{\rm (i)}\, $f_{(i,r)}\cdot J_j\cdot f_{(j,s)}\not\subseteq J_{j-1}$;

\smallskip
{\rm (ii)}\, there is some $t\in S_j\cap e_i\circ S\circ e_j$ with $f_{(i,r)}\cdot t\cdot f_{(j,s)}\neq 0$ in $A$;

\smallskip
{\rm (iii)}\, there exists a non-zero $(k_\alpha\Gamma_{e_i}, k_\alpha\Gamma_{e_j})$-bimodule homomorphism from $f_{(i,r)}k_{\alpha}\Gamma_{e_i}\otimes f_{(j,s)}k_\alpha\Gamma_{e_j}$ to $e_i' (A/J_{j-1}) e_j'$;

\smallskip
{\rm (iv)}\,  there exists a non-zero $(k_\alpha\Gamma_{e_i}, k_\alpha\Gamma_{e_j})$-bimodule homomorphism from $T_{(i,r)}\otimes T_{(j,s)}^*$ to $e_i' (A/J_{j-1}) e_j'$;

\smallskip
{\rm (v)}\, $T_{(i,r)}\otimes T_{(j,s)}^*$ is isomorphic to a direct summand of the $(k_\alpha\Gamma_{e_i}, k_\alpha\Gamma_{e_j})$-bimodule $e_i' (A/J_{j-1}) e_j'$;

\smallskip
{\rm (vi)}\, $f_{(i,r)}\cdot (A/J_{j-1})\cdot f_{(j,s)}\neq \{0\}$.
\end{proposition}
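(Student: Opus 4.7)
My plan is to route all six conditions through the bimodule $M:=e_i'(A/J_{j-1})e_j'$, with its natural $(k_\alpha\Gamma_{e_i},k_\alpha\Gamma_{e_j})$-bimodule structure. The crucial structural observation is that the hypothesis $S_j<_\scrJ S_i$ forces $e_i'Ae_j'\subseteq J_j$: every $k$-basis element $e_i\circ s\circ e_j$ of $e_i'Ae_j'$ factors through both $e_i$ and $e_j$, so its $\scrJ$-class is bounded above by both $S_i$ and $S_j$, and hence lies in $S_{\leq j}$. Combined with $f_{(i,r)}=f_{(i,r)}e_i'$ and $f_{(j,s)}=e_j'f_{(j,s)}$, this gives
\[
  f_{(i,r)}\cdot A\cdot f_{(j,s)} \,=\, f_{(i,r)}\cdot e_i'Ae_j'\cdot f_{(j,s)} \,=\, f_{(i,r)}\cdot J_j\cdot f_{(j,s)},
\]
so (i) and (vi) literally assert the same non-containment in $J_{j-1}$.

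For (vi)$\iff$(ii), the bimodule $M$ has a $k$-basis indexed by $S_j\cap e_i\circ S\circ e_j$, and for each such $t$, the element $f_{(i,r)}\cdot t\cdot f_{(j,s)}$ is a $k$-linear combination of products $g\circ t\circ h$ with $g\in\Gamma_{e_i}$ and $h\in\Gamma_{e_j}$, each of which remains in $S_j$ (multiplication by units does not move an element out of its $\scrJ$-class). Hence $f_{(i,r)}\cdot t\cdot f_{(j,s)}\in kS_j$, and since $kS_j\cap J_{j-1}=\{0\}$, this product vanishes in $A/J_{j-1}$ iff it vanishes in $A$. The same calculation further shows that any contribution to $f_{(i,r)}Af_{(j,s)}$ coming from an $s'\in e_i\circ S\circ e_j$ with $\scrJ(s')<_\scrJ S_j$ already sits in $J_{j-1}$, so non-triviality of $f_{(i,r)}(A/J_{j-1})f_{(j,s)}$ forces the existence of some $t$ as in (ii); the converse is obvious.

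For the three bimodule-theoretic conditions, I invoke semisimplicity of $k_\alpha\Gamma_{e_i}\otimes_k(k_\alpha\Gamma_{e_j})^{\mathrm{op}}$, which is guaranteed by our invertibility hypothesis. Then $M$ decomposes into isotypic bimodule components, and the central idempotents $f_{(i,r)}$ (acting from the left) and $f_{(j,s)}$ (acting from the right) jointly cut out exactly the component whose left restriction is $T_{(i,r)}$-isotypic and whose right restriction is $T_{(j,s)}^*$-isotypic. This component is nonzero iff $T_{(i,r)}\otimes T_{(j,s)}^*$ appears as a bimodule summand of $M$, yielding (v)$\iff$(vi); the same reasoning gives (iv)$\iff$(vi), since the existence of a nonzero bimodule map from $T_{(i,r)}\otimes T_{(j,s)}^*$ into $M$ is equivalent to these two isotypic components overlapping. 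Finally, (iii)$\iff$(iv) follows because, using (\ref{eqn left right simples}) to identify the right Wedderburn component of $f_{(j,s)}k_\alpha\Gamma_{e_j}$ with copies of $T_{(j,s)}^*$, the $k$-tensor product $f_{(i,r)}k_\alpha\Gamma_{e_i}\otimes_k f_{(j,s)}k_\alpha\Gamma_{e_j}$ is a bimodule direct sum of copies of $T_{(i,r)}\otimes T_{(j,s)}^*$, and a nonzero bimodule map from a nonzero direct sum of copies of $X$ exists iff a nonzero bimodule map from $X$ itself exists.

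The step I anticipate requiring the most care is (vi)$\iff$(ii), where one must correctly track the $2$-cocycle scalars from $\alpha$ (including those hidden in the normalization $e_i'=\alpha(e_i,e_i)^{-1}e_i$) and rule out cancellations between contributions to $f_{(i,r)}Af_{(j,s)}$ coming from different $\scrJ$-classes when passing to $A/J_{j-1}$. The cleanest way to handle this is to filter $f_{(i,r)}Af_{(j,s)}$ by the two-sided ideals $J_k$ and observe that, inside each graded piece $f_{(i,r)}J_kf_{(j,s)}/f_{(i,r)}J_{k-1}f_{(j,s)}$, the cocycle scalars are just nonzero constants that do not affect whether a sum of basis elements vanishes.
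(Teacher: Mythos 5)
Your proof is correct and essentially matches the paper's, resting on the same two key facts: that $f_{(i,r)}\cdot t\cdot f_{(j,s)}$ for $t\in S_j\cap e_i\circ S\circ e_j$ is supported on $kS_j$, which meets $J_{j-1}$ trivially, and semisimplicity of the $(k_\alpha\Gamma_{e_i},k_\alpha\Gamma_{e_j})$-bimodule category. The only cosmetic differences are that you use (vi) rather than (i) as the hub and derive $f_{(i,r)}\cdot A\cdot f_{(j,s)}=f_{(i,r)}\cdot J_j\cdot f_{(j,s)}$ from $e_i'Ae_j'\subseteq J_j$ rather than the paper's $Ae_j'\subseteq J_j$; the cancellation and cocycle-scalar worry you flag at the end is already resolved by the first fact, so no separate filtration by the $J_k$ is required.
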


\begin{proof}
(i) $\iff$ (ii): Note first that we have
\begin{equation*}
  f_{(i,r)}\cdot J_j\cdot f_{(j,s)}\not\subseteq J_{j-1}\Leftrightarrow 
  \exists\, t\in S_{\leq j}: f_{(i,r)}\cdot t\cdot f_{(j,s)}\notin J_{j-1}\,.
\end{equation*}
So, suppose that $t\in S_{\leq j}$ is such that $ f_{(i,r)}\cdot t\cdot f_{(j,s)}\notin J_{j-1}$. Then $t':=e_i\circ t\circ e_j$ is such that $t'\in e_i\circ S\circ e_j\cap S_{\leq j}$ and $f_{(i,r)}\cdot t'\cdot f_{(j,s)}=f_{(i,r)}\cdot t\cdot f_{(j,s)}\notin J_{j-1}$. But if $t'$ was contained in some $\scrJ$-class $S_l$ with $l<j$ then we would get $f_{(i,r)}\cdot t'\cdot f_{(j,s)}\in J_{j-1}$, which is not the case. Thus $t'\in e_i\circ S\circ e_j\cap S_j$, which gives
\begin{equation}\label{eqn s}
f_{(i,r)}\cdot J_j\cdot f_{(j,s)}\not\subseteq J_{j-1}\Leftrightarrow\exists\, t\in S_j\cap e_i\circ S\circ e_j \colon f_{(i,r)}\cdot t\cdot f_{(j,s)}\notin J_{j-1}\,.
\end{equation}
Now let $t\in S_j\cap e_i\circ S\circ e_j$. Then $f_{(i,r)}\cdot t\cdot f_{(j,s)}$ is a $k$-linear combination of elements of the form
$t_i\circ t\circ t_j$, for suitable $t_i\in \Gamma_{e_i}$ and $t_j\in\Gamma_{e_j}$. Fix such elements $t_i$ and $t_j$. Then we have
$$S\circ t\circ S= S\circ e_i\circ t\circ e_j\circ S=S\circ t_i^{-1}\circ t_i\circ t\circ t_j\circ t_j^{-1}\circ S\subseteq S\circ t_i\circ t\circ t_j\circ S\subseteq S\circ t\circ S,$$
hence $\scrJ(t_i\circ t\circ t_j)=\scrJ(t)=S_j$, that is, $f_{(i,r)}\cdot t\cdot f_{(j,s)}\in kS_j$. Since $kS_j\cap J_{j-1}=\{0\}$, this finally yields
$$f_{(i,r)}\cdot J_j\cdot f_{(j,s)}\not\subseteq J_{j-1}\Leftrightarrow\exists\, t\in S_j\cap e_i\circ S\circ e_j \colon f_{(i,r)}\cdot t\cdot f_{(j,s)}\neq 0\,,$$
proving the equivalence of (i) and (ii).

\smallskip
(i) $\Rightarrow$ (iii): Let $a\in f_{(i,r)}\cdot J_j\cdot f_{(j,s)}\smallsetminus J_{j-1}$. Then the map 
$$\varphi\colon f_{(i,r)}\cdot k_\alpha\Gamma_{e_i}\otimes f_{(j,s)}\cdot  k_\alpha\Gamma_{e_j}\to f_{(i,r)}\cdot (A/J_{j-1})\cdot f_{(j,s)}\subseteq e_i'(A/J_{j-1})e_j'$$
given by $\varphi(x\otimes y)=xay +J_{j-1}$, for $x\in f_{(i,r)}\cdot k_\alpha\Gamma_{e_i}$
and $y\in f_{(j,s)}\cdot  k_\alpha\Gamma_{e_j}=k_\alpha\Gamma_{e_j}\cdot f_{(j,s)}$, is a $(k_\alpha\Gamma_{e_i},k_\alpha\Gamma_{e_j})$-bimodule homomorphism with $\varphi(f_{(i,r)}\otimes f_{(j,s)})=f_{(i,r)}\cdot a\cdot f_{(j,s)}+J_{j-1} = a + J_{j-1} \neq J_{j-1}$. 

\smallskip
(iii) $\Rightarrow$ (i): Suppose that $\varphi$ is a non-zero $(k_\alpha\Gamma_{e_i},k_\alpha\Gamma_{e_j})$-bimodule homomorphism from $f_{(i,r)}\cdot k_\alpha\Gamma_{e_i}\otimes f_{(j,s)}\cdot k_\alpha\Gamma_{e_j}$ to $e_i' (A/J_{j-1}) e_j'$. Then $0\neq\varphi(f_{(i,r)}\otimes f_{(j,s)})\in f_{(i,r)}\cdot (A/J_{j-1})\cdot f_{(j,s)}\subseteq e_i'(A/J_{j-1})e_j'$. Note that $Af_{(j,s)}\subseteq Ae_j'f_{(j,s)}\subseteq J_jf_{(j,s)}$. Thus also
$f_{(i,r)}\cdot A\cdot f_{(j,s)}=f_{(i,r)}\cdot J_j\cdot f_{(j,s)}$, and we obtain $f_{(i,r)}\cdot J_j\cdot f_{(j,s)}\not\subseteq J_{j-1}$. 

\smallskip
(iii) $\iff$ (iv) $\iff$ (v): 
Since $k_\alpha\Gamma_{e_i}$ and $k_{\alpha}\Gamma_{e_j}$ are semisimple $k$-algebras, the category of $(k_\alpha\Gamma_{e_i},k_\alpha\Gamma_{e_j})$-bimodules is semisimple and the bimodule $f_{(i,r)}k_{\alpha}\Gamma_{e_i}\otimes f_{(j,s)}k_\alpha\Gamma_{e_j}$ is isomorphic to a direct sum of copies of $T_{(i,r)}\otimes T_{(j,s)}^*$. The assertions (iii)--(v) are now clearly equivalent.

\smallskip
(i) $\iff$ (vi): This follows immediately from $f_{(i,r)}\cdot A \cdot f_{(j,s)}=f_{(i,r)}\cdot J_j \cdot f_{(j,s)}$.
\end{proof}

\noindent
Similarly to the proof of the previous proposition one proves the following symmetric result:

\begin{proposition}\label{prop cond (ii)'}
 Let again $i,j\in\{1,\ldots,n\}$ be such that $S_j<_\scrJ S_i$ and let $(i,r),(j,s)\in\Lambda$. Then the following are equivalent:

\smallskip
{\rm (i)}\, $f_{(j,s)}\cdot J_j\cdot f_{(i,r)}\not\subseteq J_{j-1}$;

\smallskip
{\rm (ii)}\, there is some $t\in S_j\cap e_j\circ S\circ e_i$ with $f_{(j,s)}\cdot t\cdot f_{(i,r)}\neq 0$ in $A$;

\smallskip
{\rm (iii)}\, there exists a non-zero $(k_\alpha\Gamma_{e_j}, k_\alpha\Gamma_{e_i})$-bimodule homomorphism from $f_{(j,s)}k_{\alpha}\Gamma_{e_j}\otimes f_{(i,r)}k_\alpha\Gamma_{e_i}$ to $e_j' (A/J_{j-1}) e_i'$;

\smallskip
{\rm (iv)}\,  there exists a non-zero $(k_\alpha\Gamma_{e_j}, k_\alpha\Gamma_{e_i})$-bimodule homomorphism from $T_{(j,s)}\otimes T_{(i,r)}^*$ to $e_j' (A/J_{j-1}) e_i'$;

\smallskip
{\rm (v)}\, $T_{(j,s)}\otimes T_{(i,r)}^*$ is isomorphic to a direct summand of the $(k_\alpha\Gamma_{e_j}, k_\alpha\Gamma_{e_i})$-bimodule $e_j' (A/J_{j-1}) e_i'$;

\smallskip
{\rm (vi)}\, $f_{(j,s)}\cdot (A/J_{j-1})\cdot f_{(i,r)}\neq \{0\}$.
\end{proposition}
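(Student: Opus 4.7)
The plan is to mirror the proof of Proposition~\ref{prop cond (ii)} step by step, swapping the roles of $(i,r)$ and $(j,s)$ (and of $e_i$ and $e_j$) throughout, while keeping the asymmetric hypothesis $S_j<_\scrJ S_i$ fixed, since it only governs which ideal $J_j$ appears.

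For the equivalence (i) $\iff$ (ii), I would first note that any witness $t\in S_{\le j}$ with $f_{(j,s)}\cdot t\cdot f_{(i,r)}\notin J_{j-1}$ may be replaced by $t':=e_j\circ t\circ e_i\in e_j\circ S\circ e_i$, because $f_{(j,s)}e_j'=f_{(j,s)}$ and $e_i'f_{(i,r)}=f_{(i,r)}$ leave the product unchanged. A $\scrJ$-class argument then forces $t'\in S_j$: if $t'\in S_l$ for some $l<j$ then $f_{(j,s)}\cdot t'\cdot f_{(i,r)}\in J_{j-1}$, a contradiction. Conversely, for any $t\in S_j\cap e_j\circ S\circ e_i$ and any $t_j\in\Gamma_{e_j}$, $t_i\in\Gamma_{e_i}$, the chain
\[
  S\circ t\circ S\;=\;S\circ e_j\circ t\circ e_i\circ S\;\subseteq\;S\circ t_j\circ t\circ t_i\circ S\;\subseteq\;S\circ t\circ S
\]
shows that $\scrJ(t_j\circ t\circ t_i)=S_j$, so $f_{(j,s)}\cdot t\cdot f_{(i,r)}\in kS_j$, and since $kS_j\cap J_{j-1}=\{0\}$, the equivalence follows.

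For (i) $\Rightarrow$ (iii), I would pick $a\in f_{(j,s)}\cdot J_j\cdot f_{(i,r)}\smallsetminus J_{j-1}$ and define the bimodule map
\[
  \varphi\colon f_{(j,s)}k_\alpha\Gamma_{e_j}\otimes f_{(i,r)}k_\alpha\Gamma_{e_i}\to e_j'(A/J_{j-1})e_i'\,,\quad x\otimes y\mapsto xay+J_{j-1}\,,
\]
which sends $f_{(j,s)}\otimes f_{(i,r)}$ to the non-zero class $a+J_{j-1}$. Conversely, evaluating any non-zero such $\varphi$ at $f_{(j,s)}\otimes f_{(i,r)}$ produces a non-zero element of $f_{(j,s)}\cdot(A/J_{j-1})\cdot f_{(i,r)}$; then the inclusion $f_{(j,s)}\cdot A\subseteq e_j'A\subseteq J_j$, which uses $e_j'\in J_j$ and the two-sidedness of $J_j$, yields $f_{(j,s)}\cdot A\cdot f_{(i,r)}=f_{(j,s)}\cdot J_j\cdot f_{(i,r)}$, giving (i). This same identity settles (i) $\iff$ (vi) directly. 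The equivalences (iii) $\iff$ (iv) $\iff$ (v) are then formal: semisimplicity of $k_\alpha\Gamma_{e_j}$ and $k_\alpha\Gamma_{e_i}$ (guaranteed by the invertibility of $|\Gamma_{e_i}|$ and $|\Gamma_{e_j}|$ in $k$) realises the bimodule $f_{(j,s)}k_\alpha\Gamma_{e_j}\otimes f_{(i,r)}k_\alpha\Gamma_{e_i}$ as a direct sum of copies of the simple bimodule $T_{(j,s)}\otimes T_{(i,r)}^*$.

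I do not expect a substantial obstacle, since the argument is essentially a formal mirror image of Proposition~\ref{prop cond (ii)}. The only point requiring care is the reduction $f_{(j,s)}\cdot A\subseteq J_j$ in the (iii) $\Rightarrow$ (i) step: one must place the idempotent belonging to the \emph{small} $\scrJ$-class, namely $e_j'$, on the outside of $A$, since $e_i'\in J_i$ with $j<i$ does not imply any containment in the smaller ideal $J_j$.
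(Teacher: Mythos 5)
Your proposal is correct and follows exactly the approach the paper intends: the paper itself gives no separate proof of this proposition, stating only that it is proved ``similarly to the previous proposition,'' and your proof is precisely the left--right mirror of the proof of Proposition~\ref{prop cond (ii)}. You correctly identified the one asymmetry worth flagging, namely that in the implication (iii)~$\Rightarrow$~(i) one needs $f_{(j,s)}A\subseteq e_j'A\subseteq J_j$ rather than the analogous statement with $e_i'$, since it is $e_j$ (the idempotent in the smaller $\scrJ$-class) that lies in $J_j$.
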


\begin{remark}\label{rem trivial alpha}
Suppose that $i\in\{1,\ldots,n\}$ and that the restriction of $\alpha$ to $\Gamma_{e_i}$ is a coboundary, that is, that there exists a function $\mu\colon \Gamma_{e_i}\to k^\times$ such that $\alpha(t,t')=\mu(t)\mu(t')\mu(t\circ t')^{-1}$,  for all  $t,t'\in\Gamma_{e_i}$. Then we have a $k$-algebra isomorphism 
\begin{equation}\label{eqn trivial alpha}
k_\alpha\Gamma_{e_i}\myiso k\Gamma_{e_i}\,,\quad t\mapsto \mu(t)\cdot t\,,
\end{equation}
where $t\in\Gamma_{e_i}$, and where $k\Gamma_{e_i}$ denotes the (untwisted) group algebra of $\Gamma_{e_i}$ over $k$. In particular, this holds if the restriction of $\alpha$ to $\Gamma_{e_i}$ is the constant function with some value $a\in k^\times$. In this case $\mu$ can also be chosen to be the constant function with value $a$. 

\smallskip
If also $j\in\{1,\ldots,n\}$ and if the restriction of $\alpha$ to $\Gamma_{e_j}$ is a coboundary then we have 
$k$-algebra isomorphisms
\begin{equation}\label{eqn group algebra}
k_\alpha\Gamma_{e_i}\otimes k_\alpha\Gamma_{e_j}\cong k\Gamma_{e_i}\otimes k\Gamma_{e_j}
\cong k[\Gamma_{e_i}\times\Gamma_{e_j}]\,.
\end{equation}
We may and shall then identify every left $k_\alpha\Gamma_{e_i}\otimes k_\alpha\Gamma_{e_j}$-module with a
$k[\Gamma_{e_i}\times\Gamma_{e_j}]$-module. Moreover, we shall always identify every $(k\Gamma_{e_i},k\Gamma_{e_j})$-bimodule $M$ with the left $k[\Gamma_{e_i}\times \Gamma_{e_j}]$-module $M$ defined by $(x,y)\cdot m:= x\cdot m\cdot y^{-1}$, for $m\in M$, $x\in \Gamma_{e_i}$, $y\in\Gamma_{e_j}$. Note that, under these identifications, the $(k_\alpha\Gamma_{e_i},k_\alpha\Gamma_{e_j})$-bimodule $T_{(i,r)}\otimes T_{(j,s)}^*$ becomes the left $k[\Gamma_{e_i}\times \Gamma_{e_j}]$-module 
$T_{(i,r)}\otimes T_{(j,s)}^*$.

\smallskip
In our applications to biset functors and Brauer algebras we shall see that the restrictions of the relevant 2-cocycles to the maximal subgroups $\Gamma_{e_i}$ of the respective categories are always $2$-coboundaries (in fact, even constant). Before simplifying the first condition in Definition~\ref{defi new order}(ii) further in this situation, we introduce a last bit of notation.
\end{remark}

\begin{notation}\label{nota perm action}
Let $i,j\in\{1,\ldots,n\}$ be such that $S_j<_\scrJ S_i$. Then the set $S_j\cap e_i\circ S\circ e_j$ carries a left $\Gamma_{e_i}\times\Gamma_{e_j}$-set structure via
$$(x,y)\cdot t:=x\circ t\circ y^{-1}\quad (x\in\Gamma_{e_i},\, y\in\Gamma_{e_j},\, t\in S_j\cap e_i\circ S\circ e_j)\,.$$
In fact, clearly $x\circ t\circ y^{-1} \in e_i\circ S\circ e_j$, and $x\circ t\circ y^{-1}\in S_j=\scrJ(t)$, since $x\circ  t\circ y^{-1}\in S\circ t\circ S$ and $t=x^{-1}\circ x\circ t\circ y^{-1}\circ y\in S\circ x\circ t\circ y^{-1} \circ S$.
We denote the stabilizer of $t\in S_j\cap e_i\circ S\circ e_j$ by $\stab_{\Gamma_{e_i}\times\Gamma_{e_j}}(t)$, or simply
by $\stab(t)$ when no confusion concerning the groups is possible.

Note that, analogously, the set $S_j\cap e_j\circ S\circ e_i$ carries a left $\Gamma_{e_j}\times \Gamma_{e_i}$-module structure.
\end{notation}

\begin{corollary}\label{cor cond (ii)}
Let $i,j\in\{1,\ldots,n\}$ be such that $S_j<_\scrJ S_i$. Suppose that $\alpha$ restricts to constant
$2$-cocycles on $\Gamma_{e_i}$ and on $\Gamma_{e_j}$ with values $a_i$ and $a_j$, respectively. Moreover, let $(i,r),(j,s)\in\Lambda$.

\smallskip
{\rm (a)}\, Assume that $\alpha(x,t)=a_i$ and $\alpha(t,y)=a_j$ for all $x\in \Gamma_{e_i}$, $t\in S_j\cap e_i\circ S\circ e_j$, and $y\in \Gamma_{e_j}$. Then one has $f_{(i,r)}\cdot J_j\cdot f_{(j,s)}\not\subseteq J_{j-1}$ if and only if there is some $t\in S_j\cap e_i\circ S\circ e_j$ such that $T_{(i,r)}\otimes T_{(j,s)}^*$ is isomorphic to a direct summand of the permutation $k[\Gamma_{e_i}\times\Gamma_{e_j}]$-module $\Ind_{\stab(t)}^{\Gamma_{e_i}\times\Gamma_{e_j}}(k)$;

\smallskip
{\rm (b)}\, Assume that $\alpha(y,t)=a_j$ and $\alpha(t,x)=a_i$ for all $y\in \Gamma_{e_j}$, $t\in S_j\cap e_j\circ S\circ e_i$, and $x\in \Gamma_{e_i}$. Then one has $f_{(j,s)}\cdot J_j\cdot f_{(i,r)}\not\subseteq J_{j-1}$ if and only if there is some $t\in S_j\cap e_j\circ S\circ e_i$ such that $T_{(j,s)}\otimes T_{(i,r)}^*$ is isomorphic to a direct summand of the permutation $k[\Gamma_{e_j}\times\Gamma_{e_i}]$-module $\Ind_{\stab(t)}^{\Gamma_{e_j}\times\Gamma_{e_i}}(k)$.
\end{corollary}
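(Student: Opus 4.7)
I would deduce both parts directly from Propositions~\ref{prop cond (ii)} and \ref{prop cond (ii)'} by identifying the $(k_\alpha\Gamma_{e_i},k_\alpha\Gamma_{e_j})$-bimodule structure of $e_i'(A/J_{j-1})e_j'$ explicitly under the present cocycle hypotheses. I focus on part~(a); part~(b) will then follow by an entirely symmetric argument using Proposition~\ref{prop cond (ii)'} in place of Proposition~\ref{prop cond (ii)}.

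First, as in the proof of Proposition~\ref{prop cond (ii)}, $Af_{(j,s)}\subseteq Ae_j'\subseteq J_j$, so that $f_{(i,r)}\cdot(A/J_{j-1})\cdot f_{(j,s)}=f_{(i,r)}\cdot(J_j/J_{j-1})\cdot f_{(j,s)}$. Hence the whole question takes place inside $M:=e_i'(J_j/J_{j-1})e_j'$. I would then observe that $M$ has the set $X:=S_j\cap e_i\circ S\circ e_j$ as a $k$-basis: for $t\in S_j$, the product $e_i\cdot t\cdot e_j\in A$ is non-zero precisely when $t\in X$, in which case it is a non-zero scalar multiple of $t$.

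Next I would compute the bimodule action on $M=kX$. Using $g\circ e_i=e_i\circ g=g$ for $g\in\Gamma_{e_i}$, $h\circ e_j=e_j\circ h=h$ for $h\in\Gamma_{e_j}$, and $t=e_i\circ t\circ e_j$ for $t\in X$, one checks that $g\circ t\circ h\in X$. The two cocycle hypotheses then yield
\[
  g\cdot t\cdot h=\alpha(g,t)\,\alpha(g\circ t,h)\,(g\circ t\circ h)=a_i a_j\,(g\circ t\circ h)
\]
in $A$. Invoking the $2$-coboundary trivializations $\phi\colon k_\alpha\Gamma_{e_i}\myiso k\Gamma_{e_i}$, $g\mapsto a_ig$, and the analogous $\psi$ from Remark~\ref{rem trivial alpha}, the scalars $a_i$, $a_j$ cancel and $M$ becomes isomorphic, as a $(k\Gamma_{e_i},k\Gamma_{e_j})$-bimodule, to $kX$ equipped with the plain permutation action. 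Under the convention of Remark~\ref{rem trivial alpha}, this is the permutation $k[\Gamma_{e_i}\times\Gamma_{e_j}]$-module associated to the $\Gamma_{e_i}\times\Gamma_{e_j}$-set $X$ of Notation~\ref{nota perm action}, so that
\[
  M\cong\bigoplus_{[t]\in X/(\Gamma_{e_i}\times\Gamma_{e_j})}\Ind_{\stab(t)}^{\Gamma_{e_i}\times\Gamma_{e_j}}(k).
\]

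Finally, by Proposition~\ref{prop cond (ii)}(v), the condition $f_{(i,r)}\cdot J_j\cdot f_{(j,s)}\not\subseteq J_{j-1}$ is equivalent to $T_{(i,r)}\otimes T_{(j,s)}^*$ being a summand of $e_i'(A/J_{j-1})e_j'$, and by the first step this space equals $M$. Since $k[\Gamma_{e_i}\times\Gamma_{e_j}]$ is semisimple, the displayed decomposition then reformulates the condition as $T_{(i,r)}\otimes T_{(j,s)}^*$ being a summand of $\Ind_{\stab(t)}^{\Gamma_{e_i}\times\Gamma_{e_j}}(k)$ for some $t\in X$. The delicate points are the bookkeeping of the two cocycle values $a_i$, $a_j$ so that they cancel correctly under $\phi$ and $\psi$, and the consistent identification of $T_{(i,r)}\otimes T_{(j,s)}^*$ as a $k[\Gamma_{e_i}\times\Gamma_{e_j}]$-module throughout, which is precisely what Remark~\ref{rem trivial alpha} is designed to justify.
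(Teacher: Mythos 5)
Your proposal is correct and follows essentially the same route as the paper: reduce to $e_i'(J_j/J_{j-1})e_j'$ with basis $X:=S_j\cap e_i\circ S\circ e_j$, show the cocycle hypothesis makes the $(k_\alpha\Gamma_{e_i},k_\alpha\Gamma_{e_j})$-bimodule action into a permutation action after trivialization, decompose into transitive permutation modules, and invoke Proposition~\ref{prop cond (ii)}(v) (resp.\ \ref{prop cond (ii)'}(v) for part~(b)). One small inaccuracy in your justification of the basis: for $t\in S_j\smallsetminus X$ the product $e_i\cdot t\cdot e_j$ may well be non-zero in $A$ (it equals a scalar times $e_i\circ t\circ e_j$ whenever the composition exists); what is true, and what you actually need, is that the \emph{coset} of $e_i\cdot t\cdot e_j$ in $J_j/J_{j-1}$ is non-zero precisely when $e_i\circ t\circ e_j\in S_j$, i.e.\ when $e_i\circ t\circ e_j\in X$, and that this element then equals $t$ exactly when $t\in X$; together with $kS_j\cap J_{j-1}=\{0\}$ this gives the asserted $k$-basis. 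The remainder of your computation --- in particular that $g\circ t\in X$ so the hypothesis applies to $\alpha(g\circ t,h)$, and that the trivializations $g\mapsto a_i g$, $h\mapsto a_j h$ cancel the factor $a_i a_j$ --- is carried out correctly and matches the paper.
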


\begin{proof}
We identify every $(k_\alpha\Gamma_{e_i}, k_\alpha\Gamma_{e_j})$-bimodule with a $k[\Gamma_{e_i}\times\Gamma_{e_j}]$-module as indicated in Remark~\ref{rem trivial alpha}. To prove (a), we first observe again that $J_je'_j= Ae'_j$, and we deduce that the cosets in $A/J_{j-1}$ of the elements of
$S_j\cap e_i\circ S\circ e_j$ form in fact a $k$-basis of $e_i'(A/J_{j-1})e_j'$. The hypothesis in (a) implies that under the resulting $k[\Gamma_{e_i}\times \Gamma_{e_j}]$-module structure, this $k$-basis is permuted by $\Gamma_{e_i}\times\Gamma_{e_j}$ in the same way as in \ref{nota perm action}. Thus, we obtain a $k[\Gamma_{e_i}\times\Gamma_{e_j}]$-module isomorphism 
$$e_i'(A/J_{j-1})e_j'\cong \bigoplus_t\Ind_{\stab(t)}^{\Gamma_{e_i}\times\Gamma_{e_j}}(k)\,,$$
where $t$ varies over a set of representatives of the $\Gamma_{e_i}\times\Gamma_{e_j}$-orbits on 
$S_j\cap e_i\circ S\circ e_j$. Now assertion (a) follows from Proposition~\ref{prop cond (ii)}(v).

Assertion (b) is proved analogously using Proposition~\ref{prop cond (ii)'}.
\end{proof}



\section{Duality in Twisted Category Algebras}\label{sec duality}

Again, we retain the notation from Section~\ref{sec main}. Thus, we assume the notation and situation from \ref{nota twisted cat alg} and \ref{noth simple modules}, and also assume that, for every idempotent  endomorphism $e$ in $\catC$, the group order $|\Gamma_e|$ is invertible in $k$, so that Theorem~\ref{thm main} applies.


\smallskip
In this section we shall show that the partial orders $\le$ and $\unlhd$ defined in \ref{noth leq} and in Definition~\ref{defi new order} behave well under a natural notion of duality introduced in Hypotheses~\ref{hypo op}. This will allow us to apply Proposition~\ref{prop dual standard modules}. If $\alpha$ restricts to particular coboundaries on the groups $\Gamma_{e_i}$, then we shall show that the two conditions in Definition~\ref{defi new order}(ii)  are equivalent.

\smallskip
We shall need the following hypotheses, which will be satisfied in many instances, and, in particular, in the two applications we are interested in; see Sections~\ref{sec bisets} and \ref{sec brauer}.

\begin{hypotheses}\label{hypo op}
For the remainder of this section, suppose that there is a contravariant functor $-^\circ:\catC\to\catC$ satisfying the following properties:

\smallskip
(i)\, $X^\circ=X$, for every $X\in\Ob(\catC)$;

\smallskip
(ii)\, $(s^\circ)^\circ=s$,  for every $s\in \Mor(\catC)=S$;

\smallskip
(iii)\, $s\circ s^\circ \circ s=s$, for every $s\in \Mor(\catC)=S$;


\smallskip
(iv)\, $\alpha(s,t)=\alpha(t^\circ,s^\circ)$,  for all $s,t\in S$;

\smallskip
(v)\, $e_i^\circ=e_i$, for $i=1,\ldots,n$.

\smallskip\noindent
Given any subset $M$ of $S$, we  set $M^\circ:=\{s^\circ: s \in M\}$.
\end{hypotheses}

\begin{remark}\label{rem op}
Note that, given a contravariant functor $-^\circ:\catC\to\catC$ with properties (i)--(iv) above, we can always choose the idempotents
$e_1,\ldots,e_n$ such that they satisfy (v), by setting $e_i:=s_i\circ s_i^\circ$, for any $s_i\in S_i$.
\end{remark}

As immediate consequences of Hypotheses~\ref{hypo op} we obtain the following, which will be used repeatedly throughout
this article.

\begin{lemma}\label{lemma anti}
{\rm (a)}\, The functor $-^\circ:\catC\to\catC$ induces a $k$-algebra anti-involution
$$-^\circ:A\to A,\quad \sum_{s\in S} a_s s\mapsto \sum_{s\in S} a_s s^\circ\,,$$
where $a_s\in k$, for $s\in S$.

\smallskip
{\rm (b)}\, For $i\in\{1,\ldots,n\}$, one has $S_i=S_i^\circ$, and thus $J_i=J_i^\circ$, where $J_i$ is the ideal in $A$ defined in (\ref{eqn J ideals}).

\smallskip
{\rm (c)}\, For $i\in\{1,\ldots,n\}$ and $x\in \Gamma_{e_i}$, one has $x^\circ=x^{-1}$.
\end{lemma}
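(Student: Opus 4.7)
The plan is to verify each of the three parts of the lemma directly from Hypotheses~\ref{hypo op}, proceeding in the given order since (a) underlies the formal meaning of the dualities appearing in (b) and (c), while (c) will later be used when comparing the two conditions in Definition~\ref{defi new order}(ii).

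For part (a), linearity and involutivity of the map are immediate from the definition and Hypothesis~\ref{hypo op}(ii). The only substantive point is anti-multiplicativity on basis elements $s,t \in S$. Since $-^\circ$ is a contravariant functor on $\catC$, the composite $s\circ t$ exists in $\catC$ if and only if $t^\circ \circ s^\circ$ does, in which case $(s\circ t)^\circ = t^\circ \circ s^\circ$. In the existing case, I would compute $(st)^\circ = \alpha(s,t)(s\circ t)^\circ = \alpha(s,t)(t^\circ \circ s^\circ)$ and $t^\circ s^\circ = \alpha(t^\circ, s^\circ)(t^\circ \circ s^\circ)$, and the two agree by Hypothesis~\ref{hypo op}(iv); in the non-existing case both products are zero.

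For part (b), the heart is showing that $s\, \scrJ\, s^\circ$ for every $s \in S$. Hypothesis~\ref{hypo op}(iii) gives $s = s\circ s^\circ \circ s \in S\circ s^\circ \circ S$, hence $S\circ s\circ S \subseteq S\circ s^\circ \circ S$. Applying $-^\circ$ to this same identity and using Hypothesis~\ref{hypo op}(ii) yields $s^\circ = s^\circ \circ s\circ s^\circ \in S\circ s\circ S$, giving the reverse inclusion. Hence $S_i^\circ \subseteq S_i$, and the opposite inclusion follows because $-^\circ$ is an involution. The identity $J_i^\circ = J_i$ is then immediate from the basis description $J_i = k S_{\le i}$.

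For part (c), let $x \in \Gamma_{e_i}$. First, from $x = e_i \circ x \circ e_i$ and Hypothesis~\ref{hypo op}(v), applying $-^\circ$ shows $x^\circ \in e_i \circ \End_\catC(X_i) \circ e_i$, so the ensuing computations take place in the monoid whose identity is $e_i$. Hypothesis~\ref{hypo op}(iii) gives $x = x \circ x^\circ \circ x$; multiplying this on the left by $x^{-1} \in \Gamma_{e_i}$ yields $e_i = x^\circ \circ x$, and multiplying on the right by $x^{-1}$ yields $e_i = x \circ x^\circ$. By uniqueness of inverses in $\Gamma_{e_i}$, we conclude $x^\circ = x^{-1}$.

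I do not expect any serious obstacle: all three parts are essentially unpackings of the axioms. The only step that is not purely formal is showing $s\, \scrJ\, s^\circ$ in (b), where the split property (iii) does the real work by producing $s$ (resp.\ $s^\circ$) as a three-fold composition involving its dual.
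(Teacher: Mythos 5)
Your proof is correct, and in parts (a) and (b) it matches the paper's argument essentially step for step (the paper labels (a) ``a straightforward calculation,'' which your explicit check with Hypothesis~\ref{hypo op}(iv) confirms, and (b) rests on exactly the observation $\scrJ(s)=\scrJ(s^\circ)$ via (ii) and (iii)). For (c) your route is marginally different: you cancel $x^{-1}$ on each side of $x=x\circ x^\circ\circ x$ to get $x^\circ\circ x = e_i = x\circ x^\circ$ and then invoke uniqueness of two-sided inverses in a monoid, whereas the paper first shows $x^\circ\in\Gamma_{e_i}$ by dualizing $x\circ y=e_i=y\circ x$ and then uses that $x\circ x^\circ$ is an idempotent in the group $\Gamma_{e_i}$, hence equals $e_i$. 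Both arguments are valid and of comparable length; yours sidesteps the ``idempotent in a group'' observation at the price of a brief check that $e_i\circ x^\circ=x^\circ$, which you supply correctly via $x^\circ=e_i\circ x^\circ\circ e_i$.
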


\begin{proof}
Part~(a) is a straightforward calculation. For~(b) note that Hypothesis~\ref{hypo op}(iii) implies $\scrJ(s^\circ)\leq_{\scrJ} \scrJ(s)$, and (ii) then implies $\scrJ(s)\leq_{\scrJ} \scrJ(s^\circ)$. Thus $\scrJ(s)=\scrJ(s^\circ)$, and both assertions in (b) are immediate from this.

For (c), let $i\in\{1,\ldots,n\}$, and let $X_i\in\Ob(\catC)$ be such that $e_i\in \End_\catC(X_i)$.
Then, for $x\in\Gamma_{e_i}$, we have $x^\circ=(e_i\circ x\circ e_i)^\circ=e_i^\circ\circ x^\circ\circ e_i^\circ=e_i\circ x^\circ \circ e_i$, thus
$x^\circ\in e_i\circ \End_\catC(X_i)\circ e_i$. Since $x\in\Gamma_{e_i}$, there is some $y\in\Gamma_{e_i}$ with $x\circ y=e_i=y\circ x$.
Hence 
$$y^\circ\circ x^\circ=e_i^\circ=x^\circ\circ y^\circ\,,$$
so that also $x^\circ\in\Gamma_{e_i}$, since $e_i^\circ=e_i$. Moreover, by Hypothesis~\ref{hypo op}(iii), $x\circ x^\circ$ is then an idempotent in the group $\Gamma_{e_i}$, implying
$x\circ x^\circ=e_i$, thus $x^\circ=x^{-1}$.
\end{proof}

\begin{nothing}\label{noth duals for A}
{\bf Dual $A$-modules.}\, (a)\, We apply the conventions from \ref{noth duals} to the $k$-algebra involution $-^\circ \colon A\to A$ from Lemma~\ref{lemma anti}(a). Thus, whenever $M$ is a left $A$-module, its $k$-linear dual becomes a left $A$-module $M^\circ$, via (\ref{eqn dual module}). 

\smallskip
(b)\, Suppose that $B$ is a (not necessarily unitary) $k$-subalgebra of $A$ such that $B^\circ=B$. Then the $k$-algebra anti-involution $-^\circ \colon A\to A$ restricts to a $k$-algebra anti-involution of $B$, and thus also the $k$-linear dual of every left $B$-module $N$ becomes a left $B$-module, which we again denote by $N^\circ$. 

By our Hypothesis~\ref{hypo op}(v) and Lemma~\ref{lemma anti}(b),  this is, in particular, satisfied if $B$ is one of the algebras $e_i'Ae_i'$ or $k_\alpha\Gamma_{e_i}$, for $i=1,\ldots,n$.


\smallskip
(c)\,  Again suppose that $B$ is a $k$-subalgebra of $A$ such that $B^\circ=B$. Let $M$ be a left $B$-module, and let $f$ be a central idempotent in $B$. Then $f^\circ$ is also a central idempotent in $B$, and one easily checks that the restriction map
\begin{equation}\label{eqn dual iso}
f^\circ\cdot M^\circ\to (f\cdot M)^\circ\,, \quad \varphi\mapsto \varphi|_{f\cdot M}\,,
\end{equation}
defines a left $B$-module isomorphism.

So, in particular, if $B=k_\alpha\Gamma_{e_i}$, for some $i\in\{1,\ldots,n\}$, and if $f_{(i,r)}$ is the block idempotent of 
$k_\alpha\Gamma_{e_i}$ corresponding to the simple module $T_{(i,r)}$ then $f_{(i,r)}^\circ$ is the block idempotent of $k_\alpha\Gamma_{e_i}$ corresponding to the simple module $T_{(i,r)}^\circ$.

\smallskip
(d)\, Suppose now that $f\in A$ is an idempotent such that $f^\circ=f$, let $B:=fAf$, and let $M$ be a left $A$-module.
 Then the restriction map
\begin{equation}\label{eqn dual iso'}
  f\cdot M^\circ \mapsto (f\cdot M)^\circ\,, \quad \varphi\mapsto \varphi|_{f\cdot M}\,,
\end{equation}
is a left $B$-module isomorphism.
\end{nothing}

\begin{notation}\label{nota dual labels}
As before, for each $(j,s)\in\Lambda$, we denote by $\Delta_{(j,s)}$ and $\nabla_{(j,s)}$ the standard $A$-module and the costandard $A$-module, respectively, labelled by $(j,s)$ with respect to $(\Lambda,\leq)$ and $(\Lambda,\unlhd)$, as defined in (\ref{eqn Dir}) and determined in Corollary~\ref{cor main}.
In accordance with \ref{noth duals}, for $(i,r)\in\Lambda$, we denote by $(i,r)^\circ\in\Lambda$ the label of the simple $A$-module
$D_{(i,r)}^\circ$. Analogously, let $r^\circ\in\{1,\ldots,l_i\}$ be such that $T_{(i,r^\circ)}\cong T_{(i,r)}^\circ$ as $k_\alpha\Gamma_{e_i}$-modules.

With this, we now have:
\end{notation}

\begin{proposition}\label{prop costandard A-modules}
For $(i,r),(j,s)\in \Lambda$, one has

\smallskip
{\rm (a)}\, $\Delta_{(i,r)^\circ}\cong Ae_i'\otimes_{e_i'Ae_i'} \tilde{T}_{(i,r)}^\circ\cong \Delta_{(i,r^\circ)}$, thus $(i,r)^\circ=(i,r^\circ)$;

\smallskip
{\rm (b)}\, $(i,r)\leq (j,s)$ if and only if $(i,r)^\circ\leq (j,s)^\circ$;

\smallskip
{\rm (c)}\, $(i,r)\unlhd (j,s)$ if and only if $(i,r)^\circ\unlhd (j,s)^\circ$;

\smallskip
{\rm (d)}\, $\Delta_{(i,r)^\circ}\cong \nabla_{(i,r)}^\circ$ and $\nabla_{(i,r)^\circ}\cong\Delta_{(i,r)}^\circ$.
\end{proposition}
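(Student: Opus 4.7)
The proof will proceed in four steps corresponding to the four claims. My plan is to first establish (a), a description of the $-^\circ$-dual of a standard module, and then derive (b) and (c) directly from the observation that the duality transports all the data defining $\le$ and $\unlhd$ to themselves (after appropriate index swaps), and finally to apply Proposition~\ref{prop dual standard modules} to deduce (d). The main technical point is the bookkeeping in (a), namely the identification $f_{(i,r)}^\circ=f_{(i,r^\circ)}$; once this is in hand the rest is essentially formal.

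For (a), the first observation is that $(e_i')^\circ=\alpha(e_i,e_i)^{-1}e_i^\circ=e_i'$ by Hypothesis~\ref{hypo op}(v), so the subalgebra $e_i'Ae_i'$, its unitary subalgebra $k_\alpha\Gamma_{e_i}$ (cf.\ Lemma~\ref{lemma anti}(c)), and its ideal $kJ_{e_i}$ are all stable under $-^\circ$. Applying the isomorphism (\ref{eqn dual iso}) with $B=k_\alpha\Gamma_{e_i}$, $f=f_{(i,r)}$ and $M=T_{(i,r)}$ yields $f_{(i,r)}^\circ\cdot T_{(i,r)}^\circ\cong(f_{(i,r)}\cdot T_{(i,r)})^\circ=T_{(i,r)}^\circ$, so $f_{(i,r)}^\circ$ is the block idempotent corresponding to $T_{(i,r)}^\circ\cong T_{(i,r^\circ)}$, i.e.\ $f_{(i,r)}^\circ=f_{(i,r^\circ)}$. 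Since $kJ_{e_i}$ is stable under $-^\circ$ and acts as $0$ on $\tilde T_{(i,r)}$, it also acts as $0$ on $\tilde T_{(i,r)}^\circ$, showing $\tilde T_{(i,r)}^\circ\cong\tilde T_{(i,r^\circ)}$ as $e_i'Ae_i'$-modules, whence $Ae_i'\otimes_{e_i'Ae_i'}\tilde T_{(i,r)}^\circ\cong\Delta_{(i,r^\circ)}$. To identify this module with $\Delta_{(i,r)^\circ}$ it suffices, by Theorem~\ref{thm LS simple} and the fact that $D_{(i,r^\circ)}=\Hd(\Delta_{(i,r^\circ)})$, to check that $D_{(i,r)^\circ}\cong D_{(i,r^\circ)}$. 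Using (\ref{eqn dual iso'}) with $f=e_i'$ together with Remark~\ref{rem condense}, one obtains the $e_i'Ae_i'$-isomorphisms $e_i'\cdot D_{(i,r)}^\circ\cong(e_i'\cdot D_{(i,r)})^\circ\cong\tilde T_{(i,r)}^\circ\cong\tilde T_{(i,r^\circ)}\cong e_i'\cdot D_{(i,r^\circ)}$, which forces $D_{(i,r)}^\circ\cong D_{(i,r^\circ)}$ and hence $(i,r)^\circ=(i,r^\circ)$.

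Part (b) is then immediate: the order $\le$ depends only on the first coordinate, and by (a) the first coordinate is preserved under $(\cdot)^\circ$. For (c), since $\unlhd$ is by definition the transitive closure of $\sqsubseteq$, it suffices to verify the equivalence for $\sqsubseteq$. The condition $S_j<_{\scrJ}S_i$ depends only on first coordinates and is unchanged. For the second condition, Lemma~\ref{lemma anti}(b) gives $J_j^\circ=J_j$ and $J_{j-1}^\circ=J_{j-1}$, so applying the anti-involution yields
\begin{equation*}
\bigl(f_{(i,r)}\cdot J_j\cdot f_{(j,s)}\bigr)^\circ=f_{(j,s)}^\circ\cdot J_j\cdot f_{(i,r)}^\circ=f_{(j,s^\circ)}\cdot J_j\cdot f_{(i,r^\circ)}\,,
\end{equation*}
which lies in $J_{j-1}$ if and only if the original subset does. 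The two clauses in Definition~\ref{defi new order}(ii) for $((i,r),(j,s))$ are therefore exchanged with those for $((i,r)^\circ,(j,s)^\circ)$, leaving the disjunction invariant; hence $(i,r)\sqsubset(j,s)\iff(i,r)^\circ\sqsubset(j,s)^\circ$, and taking transitive closure yields (c).

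Finally, (d) follows by combining (b), (c), and Proposition~\ref{prop dual standard modules}: with respect to either partial order on $\Lambda$, we obtain $\Delta_{(i,r)}^\circ\cong\nabla_{(i,r)^\circ}$ and $\nabla_{(i,r)}^\circ\cong\Delta_{(i,r)^\circ}$, and substituting $(i,r)\mapsto(i,r)^\circ$ (and using $((i,r)^\circ)^\circ=(i,r)$) gives the stated form. The only genuine work is contained in part (a); once $(i,r)^\circ=(i,r^\circ)$ and $f_{(i,r)}^\circ=f_{(i,r^\circ)}$ are established, the remaining parts follow with no further computation.
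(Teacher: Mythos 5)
Your proof is correct and follows essentially the same route as the paper's: establishing $T_{(i,r)}^\circ\cong T_{(i,r^\circ)}$, $\tilde T_{(i,r)}^\circ\cong\tilde T_{(i,r^\circ)}$, and then identifying heads via $e_i'$-condensation to get $(i,r)^\circ=(i,r^\circ)$; then transporting the two conditions of Definition~\ref{defi new order} through $-^\circ$ (using Lemma~\ref{lemma anti}(b) and $f_{(i,r)}^\circ=f_{(i,r^\circ)}$) and observing the disjunction is invariant; and finally invoking Proposition~\ref{prop dual standard modules} for (d). The only cosmetic difference is that you spell out the preliminary observations $(e_i')^\circ=e_i'$ and $f_{(i,r)}^\circ=f_{(i,r^\circ)}$ inside the proof, whereas the paper has already recorded these in \ref{noth duals for A}, but the underlying argument is identical.
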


\begin{proof}
Since $T_{(i,r)}^\circ\cong T_{(i,r^\circ)}$ as $k_\alpha\Gamma_{e_i}$-modules, we have
$M:=Ae_i'\otimes_{e_i'Ae_i'} \tilde{T}_{(i,r)}^\circ\cong \Delta_{(i,r^\circ)}$ as $A$-modules. 
In order to show that $(i,r^\circ)=(i,r)^\circ$, recall that every standard module is determined by its head, and the isomorphism class of $D_{(i,r^\circ)}=\Hd(M)$ is determined by the property
$$e_i'\cdot M\cong \tilde{T}_{(i,r)}^\circ\cong e_i'\cdot \Hd(M)$$
as $e_i'Ae_i'$-modules. Since, by  (\ref{eqn id condense}), $e_i'\cdot D_{(i,r)}\cong e_i'\cdot \Delta_{(i,r)}\cong \tilde{T}_{(i,r)}$ as $e_i'Ae_i'$-modules, we also have
$$\tilde{T}_{(i,r^\circ)}\cong\tilde{T}_{(i,r)}^\circ\cong (e_i'\cdot D_{(i,r)})^\circ\cong e_i'\cdot D_{(i,r)}^\circ\cong e_i'\cdot D_{(i,r)^\circ}$$
as $e_i'Ae_i'$-modules. Note that here we
applied (\ref{eqn dual iso'}) with $f=e_i'$ to derive the third isomorphism. So, altogether, this implies
$D_{(i,r)^\circ}\cong D_{(i,r^\circ)}$ and $\Delta_{(i,r)^\circ}\cong \Delta_{(i,r^\circ)}\cong M$, proving
(a). From this, assertion~(b) follows immediately.

\smallskip
From (a),  \ref{noth duals for A}(c), and Lemma~\ref{lemma anti}(b) we now obtain
\begin{align*}
(i,r)\sqsubset (j,s)&\Leftrightarrow S_j<_{\mathscr{J}}S_i \text{ and } (f_{(i,r)}\cdot J_j\cdot f_{(j,s)}\not\subseteq J_{j-1}\text{ or }  f_{(j,s)}\cdot J_j\cdot f_{(i,r)}\not\subseteq J_{j-1})\\
&\Leftrightarrow S_j<_{\mathscr{J}}S_i \text{ and } (f_{(j,s)}^\circ \cdot J_j\cdot f_{(i,r)}^\circ\not\subseteq J_{j-1}\text{ or }  f_{(i,r)}^\circ\cdot J_j\cdot f_{(j,s)}^\circ\not\subseteq J_{j-1})\\
&\Leftrightarrow (i,r^\circ)\sqsubset (j,s^\circ)\Leftrightarrow (i,r)^\circ\sqsubset (j,s)^\circ\,,
\end{align*}
which proves (c). 

\smallskip
Assertion~(d) follows from (a), (b), and Proposition~\ref{prop dual standard modules}.
\end{proof}

\begin{corollary}\label{cor cond (ii)'}
Let $i,j\in\{1,\ldots,n\}$ be such that $S_j<_\scrJ S_i$ and suppose that $\alpha$ restricts to constant
$2$-cocycles on $\Gamma_{e_i}$ and on $\Gamma_{e_j}$ with values $a_i$ and $a_j$, respectively. Assume further that one has $\alpha(x,t)=a_i$ and $\alpha(t,y)=a_j$ for all $x\in \Gamma_{e_i}$, $t\in S_j\cap e_i\circ S\circ e_j$, and $y\in\Gamma_{e_j}$. Then, for $(i,r),(j,s)\in\Lambda$, one has $f_{(i,r)}\cdot J_j\cdot f_{(j,s)}\not\subseteq J_{j-1}$  if and only if $f_{(j,s)}\cdot J_j\cdot f_{(i,r)}\not\subseteq J_{j-1}$.
\end{corollary}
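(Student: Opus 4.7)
The plan is to apply both parts of Corollary~\ref{cor cond (ii)} to reformulate each of the two conditions in terms of simple summands of certain permutation modules over $k[\Gamma_{e_i} \times \Gamma_{e_j}]$ and $k[\Gamma_{e_j} \times \Gamma_{e_i}]$, and then to match these reformulations via the coordinate swap together with the self-duality of permutation modules over a semisimple group algebra.

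First I would verify that the hypothesis of Corollary~\ref{cor cond (ii)}(b) is automatically inherited from our assumption. By Lemma~\ref{lemma anti}(b) together with Hypothesis~\ref{hypo op}(v), the map $t \mapsto t^\circ$ is a bijection between $S_j \cap e_i \circ S \circ e_j$ and $S_j \cap e_j \circ S \circ e_i$. Using Hypothesis~\ref{hypo op}(iv) and Lemma~\ref{lemma anti}(c) I would then obtain $\alpha(y, t^\circ) = \alpha(t, y^{-1}) = a_j$ for $y \in \Gamma_{e_j}$ and $\alpha(t^\circ, x) = \alpha(x^{-1}, t) = a_i$ for $x \in \Gamma_{e_i}$, which are precisely the required conditions. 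Thus $f_{(i,r)} \cdot J_j \cdot f_{(j,s)} \not\subseteq J_{j-1}$ becomes the assertion that there is some $t \in S_j \cap e_i \circ S \circ e_j$ for which $T_{(i,r)} \otimes T_{(j,s)}^*$ is a summand of $\Ind_{\stab(t)}^{\Gamma_{e_i} \times \Gamma_{e_j}}(k)$, while $f_{(j,s)} \cdot J_j \cdot f_{(i,r)} \not\subseteq J_{j-1}$ becomes the analogous assertion over $\Gamma_{e_j} \times \Gamma_{e_i}$ for some $t' \in S_j \cap e_j \circ S \circ e_i$ and the bimodule $T_{(j,s)} \otimes T_{(i,r)}^*$.

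To bridge the two reformulations I would use two observations. First, under the coordinate swap $\Gamma_{e_i} \times \Gamma_{e_j} \to \Gamma_{e_j} \times \Gamma_{e_i}$, $(x,y) \mapsto (y,x)$, the equation $y \circ t^\circ \circ x^{-1} = t^\circ$ is (by applying $-^\circ$ and using $x^\circ = x^{-1}$ from Lemma~\ref{lemma anti}(c)) equivalent to $x \circ t \circ y^{-1} = t$, so $\stab_{\Gamma_{e_j} \times \Gamma_{e_i}}(t^\circ)$ is the image of $\stab_{\Gamma_{e_i} \times \Gamma_{e_j}}(t)$ under the swap; consequently $\Ind_{\stab(t)}^{\Gamma_{e_i} \times \Gamma_{e_j}}(k)$ corresponds under the swap to $\Ind_{\stab(t^\circ)}^{\Gamma_{e_j} \times \Gamma_{e_i}}(k)$, and the bimodule $T_{(i,r)}^* \otimes T_{(j,s)}$ corresponds to $T_{(j,s)} \otimes T_{(i,r)}^*$. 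Second, since $|\Gamma_{e_i} \times \Gamma_{e_j}|$ is invertible in $k$, every permutation module $\Ind_H^G(k)$ is self-dual, so a simple $V$ occurs in it with the same multiplicity as its $k$-dual $V^*$; in particular $T_{(i,r)} \otimes T_{(j,s)}^*$ is a summand of $\Ind_{\stab(t)}^{\Gamma_{e_i} \times \Gamma_{e_j}}(k)$ iff its dual $T_{(i,r)}^* \otimes T_{(j,s)}$ is. Chaining these two equivalences with the bijection $t \leftrightarrow t^\circ$ yields the claim. The main piece requiring care is the stabilizer identification under the swap, which reduces to the single identity $(y \circ t^\circ \circ x^{-1})^\circ = x \circ t \circ y^{-1}$ guaranteed by Lemma~\ref{lemma anti}(c).
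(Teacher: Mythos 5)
Your argument is correct, and it reaches the conclusion by a route that differs from the paper's in one structural respect. You verify explicitly that the hypothesis of Corollary~\ref{cor cond (ii)}(b) follows from that of part (a) together with Hypotheses~\ref{hypo op} (using $\alpha(y,t^\circ)=\alpha(t,y^{-1})=a_j$, etc.), then apply both parts of Corollary~\ref{cor cond (ii)} and match the resulting $\Gamma_{e_i}\times\Gamma_{e_j}$ and $\Gamma_{e_j}\times\Gamma_{e_i}$ permutation-module statements by means of the coordinate swap and the observation $\stab_{\Gamma_{e_j}\times\Gamma_{e_i}}(t^\circ)=\mathrm{swap}(\stab_{\Gamma_{e_i}\times\Gamma_{e_j}}(t))$, finishing with self-duality of permutation modules. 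The paper instead never passes to the $\Gamma_{e_j}\times\Gamma_{e_i}$ side at all: it first applies the anti-involution to the ideal condition via Lemma~\ref{lemma anti}(b) to turn $f_{(j,s)}\cdot J_j\cdot f_{(i,r)}\not\subseteq J_{j-1}$ into $f_{(i,r)}^\circ\cdot J_j\cdot f_{(j,s)}^\circ\not\subseteq J_{j-1}$, identifies $T_{(i,r)}^\circ$ with $T_{(i,r)}^*$ via Lemma~\ref{lemma anti}(c), and then works entirely with $(k_\alpha\Gamma_{e_i},k_\alpha\Gamma_{e_j})$-bimodules and Proposition~\ref{prop cond (ii)} before invoking self-duality of $e_i'(A/J_{j-1})e_j'$. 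Both proofs ultimately rest on the same two pillars (the anti-involution from Hypotheses~\ref{hypo op} and self-duality of the relevant permutation module); the paper's version is a little shorter because it avoids the stabilizer-swap bookkeeping and never needs Proposition~\ref{prop cond (ii)'} or Corollary~\ref{cor cond (ii)}(b), while yours makes the symmetry between the two sides more visible and, as a bonus, records explicitly that the hypothesis of Corollary~\ref{cor cond (ii)}(b) is automatic under the standing Hypotheses~\ref{hypo op}.
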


\begin{proof}
Recall that, by Lemma~\ref{lemma anti}(b), we have
\begin{equation}\label{eqn dual condition}
  f_{(j,s)}\cdot J_j\cdot f_{(i,r)}\not\subseteq J_{j-1} \Leftrightarrow 
  f_{(i,r)}^\circ\cdot J_j\cdot  f_{(j,s)}^\circ\not\subseteq J_{j-1}\,.
\end{equation}
By Lemma~\ref{lemma anti}(c), the simple left $k\Gamma_{e_i}$-module $T_{(i,r)}^\circ$ associated with $f_{(i,r)}^\circ$  is equal to $T_{(i,r)}^*$. Thus, by Proposition~\ref{prop cond (ii)} and Remark~\ref{rem trivial alpha}, we obtain
\begin{align*}
  f_{(i,r)}^\circ\cdot J_j\cdot f_{(j,s)}^\circ\not\subseteq J_{j-1} \Leftrightarrow\
  & \Hom_{k[\Gamma_{e_i}\times\Gamma_{e_j}]}(T_{(i,r)}^*\otimes T_{(j,s)},e_i' (A/J_{j-1})e_j')\neq \{0\} \\
  \Leftrightarrow\ & \Hom_{k[\Gamma_{e_i}\times\Gamma_{e_j}]}(T_{(i,r)}\otimes T_{(j,s)}^*,(e_i'(A/J_{j-1})e_j')^*)
  \neq \{0\}\,.
\end{align*}
The last equivalence holds because $k[\Gamma_{e_i}\times\Gamma_{e_j}]$ is semisimple.

But, as in the proof of Corollary~\ref{cor cond (ii)}, the $k[\Gamma_{e_i}\times\Gamma_{e_j}]$-module $e_i' (A/J_{j-1}) e_j'$ is a permutation module, and thus self-dual. Hence $e_i' (A/J_{j-1}) e_j'\cong (e_i' (A/J_{j-1}) e_j')^*$
as $k[\Gamma_{e_i}\times\Gamma_{e_j}]$-modules.
Altogether, this implies
\begin{align*}
f_{(j,s)}\cdot J_j\cdot f_{(i,r)}\not\subseteq J_{j-1}&\Leftrightarrow  \Hom_{k[\Gamma_{e_i}\times\Gamma_{e_j}]}(T_{(i,r)}\otimes T_{(j,s)}^*,e_i'(A/J_{j-1})e_j')\neq \{0\}\\
&\Leftrightarrow f_{(i,r)}\cdot J_j\cdot f_{(j,s)}\not\subseteq J_{j-1}\,,
\end{align*}
where the last equivalence again follows from Proposition~\ref{prop cond (ii)}.
\end{proof}

%


\section{Application I: Biset Functors}\label{sec bisets}

In this section we shall apply our results from Sections~\ref{sec main}, \ref{sec sqsubseteq} and \ref{sec duality}
to the case where the twisted category algebra is the one introduced in 
\cite[Example~5.15]{BDII}. This category algebra is closely related to the category of biset functors, as observed in \cite{BDII}. The goal of this section is to reprove, via a different approach, a result due to Webb in \cite{Webb} stating that the category of biset functors over a field of characteristic zero is a highest weight category. We shall also give an improvement on the relevant partial order on the set $\Lambda$ of isomorphism classes of simple modules. Here we only deal with the case that the underlying category of finite groups has finitely many objects. This is sufficient for many purposes, as established in \cite{Webb}. 

We begin by recalling the relevant notation as well as some results from \cite{BDII} about the category $\catC$ we need to consider. The connection to biset functors will be given in more detail in Remark~\ref{rem conn with biset functors}. From now on we suppose that $k$ is a field of characteristic 0.

\begin{notation}\label{nota ghost}
(a)\, Given finite groups $G$ and $H$, we denote by $p_1$ and $p_2$ the canonical projections $G\times H\to G$ and
$G\times H\to H$, respectively. Moreover, for every $L\leq G\times H$, we set $k_1(L):=\{g\in G\mid (g,1)\in L\}$ and
$k_2(L):=\{h\in H\mid (1,h)\in L\}$, so that $k_i(L)\unlhd p_i(L)$, for $i=1,2$.

Note that, by Goursat's Lemma, we may and shall from now on identify every subgroup $L$ of $G\times H$ with the quintuple $(p_1(L),k_1(L),\eta_L,p_2(L),k_2(L))$, where $\eta_L$ is the group isomorphism given by
$$\eta_L:p_2(L)/k_2(L)\myiso p_1(L)/k_1(L)\,,\quad hk_2(L)\mapsto gk_1(L)\,,$$
whenever $(g,h)\in L$.  The common isomorphism class of $p_1(L)/k_1(L)$ and $p_2(L)/k_2(L)$ will be denoted by 
$q(L)$.

Furthermore, in the case where $p_1(L)=G=H=p_2(L)$ and $k_1(L)=1=k_2(L)$, we have $\eta_L=\alpha$ for some automorphism
$\alpha$ of $G$, and we also denote the group $L$ by $\Delta_\alpha(G)$; in particular, for $\alpha=\mathrm{id}_G$, this gives $\Delta_\alpha(G)=\Delta(G):=\{(g,g)\mid g\in G\}$.

If $g\in G$ then the corresponding inner automorphism $G\to G$, $x\mapsto gxg^{-1}$, will be denoted by $c_g$, and 
we also set $\Delta_g(G):=\Delta_{c_g}(G)$. 

By  a {\it section} of  a finite group $G$ we understand a pair $(P,K)$ such that $K\unlhd P\leq G$. 

\smallskip
(b)\, Let $\catC$ be a category with the following properties: the objects of $\catC$ form a finite set of pairwise non-isomorphic finite groups that is {\em section-closed}, that is, whenever $G\in \Ob(\catC)$ and $(P,K)$ is a section of $G$ then there is some $H\in\Ob(\catC)$ such that $P/K\cong H$. The morphism set, for $G,H\in\Ob(\catC)$, is defined by 
$$\Hom_\catC(H,G):=\catC_{G,H}:=\{L\mid L\leq G\times H\}\,,$$
and the composition of morphisms in $\catC$ is given by
$$L\circ M:=L*M:=\{(g,k)\in G\times K\mid \exists h\in H: (g,h)\in L,\, (h,k)\in M\},\,$$
for $G,H,K\in\Ob(\catC)$, $L\in \catC_{G, H}$, $M\in\catC_{H, K}$. 
For $L\in\catC_{G,H}$, let $L^\circ:=\{(h,g)\in H\times G\mid (g,h)\in L\}\in\catC_{H,G}$.
The category $\catC$ is finite by construction, and split since $L*L^\circ*L=L$, for any $G,H\in\Ob(\catC)$ and $L\in\catC_{G,H}$; see \cite[Proposition~2.7(ii)]{BDII}. Note that, by the last statement in \ref{nota twisted cat alg}(a), the assumption that the objects of $\catC$ are pairwise non-isomorphic groups is not a significant restriction.

By \cite[Proposition~3.5]{BDII}, we have a 2-cocycle $\kappa\in Z^2(\catC,k^\times)$ defined by
\begin{equation}\label{eqn kappa}
\kappa(L,M):=\frac{|k_2(L)\cap k_1(M)|}{|H|}\,,
\end{equation}
for $G,H,K\in\Ob(\catC)$, $L\in\catC_{G,H}$, $M\in\catC_{H,K}$. 
The resulting twisted category algebra $k_\kappa\catC$ will be denoted by $A$, for the remainder of this section. Moreover, we denote the objects of $\catC$ by $G_1,\ldots,G_n$ such that $|G_i|\le|G_{i+1}|$, for $i=1,\ldots,n-1$, and we set $S:=\Mor(\catC)$. Note that mapping $L\in\catC_{G,H}$ to $L^\circ\in\catC_{H,G}$ gives rise to a contravariant functor $-^\circ\colon \catC \to\catC$ satisfying the properties (i)--(v) in Hypotheses~\ref{hypo op} with respect to the 2-cocycle $\kappa$ of $\catC$. Concrete idempotents $e_1,\ldots,e_n$
will be determined in Proposition~\ref{prop J-classes of C} below.
\end{notation}

\begin{remark}\label{rem conn with biset functors}
Biset functors on $\catC$ over $k$ are related to the twisted category algebra $A=k_\kappa\catC$ as follows. For each $i=1,\ldots,n$, we set $\varepsilon_i:= \sum_{g\in G_i} \Delta_g(G_i)=|Z(G_i)|\cdot \sum_{\alpha\in\Inn(G_i)}\Delta_\alpha(G_i)\in e'_i A e'_i$, and we set $\varepsilon:=\varepsilon_{\catC}:=\sum_{i=1}^n \varepsilon_i$. Then $\varepsilon_1,\ldots,\varepsilon_n$ are pairwise orthogonal idempotents of $A$, $\varepsilon$ is an idempotent of $A$, and the left module category of the $k$-algebra $\varepsilon A\varepsilon$ is equivalent to the category of biset functors on $\catC$ over $k$; see \cite[Example~5.15(c)]{BDII} for more detailed explanations. By Theorem~\ref{thm main}, we know that $A$ is quasi-hereditary with respect to $(\Lambda,\unlhd)$, using the notation from Sections~\ref{sec twisted cat} and \ref{sec main}. 

Our goal is to show that also the condensed $k$-algebra $\varepsilon A\varepsilon$ is quasi-hereditary. Recall from Green's idempotent condensation theory (see \cite[Section~6.2]{Gr}) that the simple modules of $\varepsilon A\varepsilon$ are of the form $\varepsilon \cdot D_{(i,r)}$, with $(i,r)\in\Lambda$ such that $\varepsilon\cdot D_{(i,r)}\neq\{0\}$ and that any two distinct such indices $(i,r)$ result in non-isomorphic simple $\varepsilon A\varepsilon$-modules. Thus, the labelling set $\Lambda'$ of the isomorphism classes of simple $\varepsilon A\varepsilon$-modules can be considered as a subset of $\Lambda$ in a natural way. Moreover, by Proposition~\ref{prop qh condensed}, it suffices to show that the idempotent $\varepsilon$ satisfies the following property: If $(i,r)\unlhd(j,s)$ are elements in $\Lambda$ and if $\varepsilon\cdot D_{(i,r)}\neq\{0\}$ then also $\varepsilon\cdot D_{(j,s)}\neq\{0\}$. This will be done in Theorem~\ref{thm condense}. The main reason for introducing the partial order $\unlhd$ in Section~\ref{sec main} is that
this property is not satisfied for the partial order $\leq$, as we shall see in Example~\ref{expl S_4} below.
\end{remark}

The following proposition establishes quickly the set $\Lambda$ for the finite split category algebra $A=k_\kappa\catC$ and the subset $\Lambda'\subseteq\Lambda$.

\begin{proposition}\label{prop J-classes of C}
{\rm (a)}\, For $L,M\in S$, one has $\scrJ(L)=\scrJ(M)$ if and only if $q(L)=q(M)$. In particular, the elements $e_i:=\Delta(G_i)\in \catC_{G_i,G_i}\subseteq S$, with $i=1,\ldots,n$, form a set of representatives of the $\scrJ$-classes of $\catC$. 
For $i=1,\ldots,n$, we have $e_i^\circ =e_i$, where $-^\circ:\catC\to \catC$ is the functor in \ref{nota ghost}(b).

\smallskip
{\rm (b)}\, For $i\in\{1,\ldots,n\}$, the element $e_i$ is an idempotent and
$\Gamma_{e_i}=\{\Delta_\alpha(G_i)\mid \alpha\in\Aut(G_i)\}$; in particular,
$\Aut(G_i)\cong\Gamma_{e_i}$ via the map $\alpha\mapsto\Delta_\alpha(G_i)$. Moreover, the $2$-cocycle $\kappa\in Z^2(\catC,k^\times)$ restricts to a constant $2$-cocycle
on $\Gamma_{e_i}$ with value $|G_i|^{-1}$, and the $k$-linear map
\begin{equation}\label{eqn untwisted iso}
k_\kappa\Gamma_{e_i}\to k\Aut(G_i),\; \Delta_\alpha(G_i)\mapsto |G_i|^{-1}\cdot \alpha\,,\quad (\alpha\in \Aut(G_i))
\end{equation}
defines a $k$-algebra isomorphism. If also $j\in \{1,\ldots,n\}$ and $L\in \catC_{G_i,G_j}$ then $\kappa(e_i,L)=|G_i|^{-1}$.

\smallskip
{\rm (c)}\, For $(i,r)\in\Lambda$, one has $\varepsilon\cdot D_{(i,r)}\neq\{0\}$ if and only if $\Inn(G_i)$ acts trivially on $T_{(i,r)}$, when viewed as $k\Aut(G_i)$-module via the isomorphism in (b).

\smallskip
{\rm (d)}\, For $i=1,\ldots,n$, we set $S_i:=\scrJ(e_i)$. Then, for $i,j\in\{1,\ldots,n\}$, one has $S_i\leq_\scrJ S_j$ if and only if there is a section $(P,K)$ of $G_j$ with $G_i\cong P/K$. In particular, the ordering $G_1,\ldots,G_n$ has the property that if $\scrJ(e_i)\leq_\scrJ \scrJ(e_j)$ then $i\le j$, as required in \ref{nota twisted cat alg}(b).
\end{proposition}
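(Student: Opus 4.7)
The plan is to verify (a), (b), and (d) by direct computations with Goursat's lemma for subgroups of direct products, the composition formula $L*M$, and the cocycle $\kappa(L,M)=|k_2(L)\cap k_1(M)|/|H|$; part (c) requires an additional argument via the algebra isomorphism established in (b). For (a), I would use the standard subquotient property of biset composition: for composable $L,M$, the Goursat quotient $q(L*M)$ is a subquotient of both $q(L)$ and $q(M)$. Together with the trivial identity $\Delta(G_i)*\Delta(G_i)=\Delta(G_i)$, this forces $\scrJ(L)=\scrJ(M)\Leftrightarrow q(L)\cong q(M)$: the direction $(\Rightarrow)$ follows from mutual subquotients of finite groups, and the converse follows by explicitly decomposing any $L$ with $q(L)\cong G_i$ as $L=L_1*\Delta(G_i)*L_2$ using the Goursat quintuple of $L$. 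The invariance $e_i^\circ=e_i$ is immediate from $\Delta(G_i)^\circ=\Delta(G_i)$. For (b), the composition rule $\Delta_\alpha(G_i)*\Delta_\beta(G_i)=\Delta_{\alpha\circ\beta}(G_i)$ gives the group isomorphism $\Aut(G_i)\cong\Gamma_{e_i}$, while $\kappa(\Delta_\alpha(G_i),\Delta_\beta(G_i))=|1\cap 1|/|G_i|=1/|G_i|$ shows constancy of the restricted cocycle; the algebra isomorphism (\ref{eqn untwisted iso}) and the identity $\kappa(e_i,L)=1/|G_i|$ for $L\in\catC_{G_i,G_j}$ then follow by routine verification.

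For (d), $S_i\leq_\scrJ S_j$ is equivalent to the existence of $L\in\catC_{G_i,G_j}$ and $M\in\catC_{G_j,G_i}$ with $L*M=\Delta(G_i)$ (using $L*e_j*M=L*M$). The forward direction applies the subquotient lemma from (a): $G_i=q(\Delta(G_i))$ is a subquotient of $q(L)$, hence of $G_j$. Conversely, given $G_i\cong P/K$ with $K\trianglelefteq P\leq G_j$ via an isomorphism $\phi\colon P/K\to G_i$, the subgroups $L:=(G_i,1,\phi,P,K)\in\catC_{G_i,G_j}$ and $M:=L^\circ\in\catC_{G_j,G_i}$ satisfy $L*M=\Delta(G_i)$ by direct calculation. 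The cardinality constraint $|G_i|\leq|G_j|$ for subquotients ensures the chosen ordering is compatible with $\leq_\scrJ$. For (c), the algebra isomorphism of (b) identifies $\varepsilon_i$ with the Reynolds idempotent $|\Inn(G_i)|^{-1}\sum_{\alpha\in\Inn(G_i)}\alpha\in k\Aut(G_i)$ of $\Inn(G_i)\trianglelefteq\Aut(G_i)$. Hence $\varepsilon_i T_{(i,r)}$ is the $\Inn(G_i)$-fixed subspace of the simple module $T_{(i,r)}$, which by simplicity equals $T_{(i,r)}$ or $\{0\}$ according to whether $\Inn(G_i)$ acts trivially. Since $e_i'D_{(i,r)}\cong T_{(i,r)}$ by (\ref{eqn id condense}) and $\varepsilon_i\in e_i'Ae_i'$, the ``if'' direction follows immediately: $\varepsilon\cdot D_{(i,r)}\supseteq\varepsilon_i\cdot D_{(i,r)}\cong\varepsilon_i T_{(i,r)}\neq\{0\}$.

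The ``only if'' direction of (c) is the main obstacle. Assume $\Inn(G_i)$ acts nontrivially on $T_{(i,r)}$; I must show $\varepsilon_j\cdot D_{(i,r)}=\{0\}$ for every $j$. Using the orthogonal idempotent decomposition $1_A=\sum_l e_l'$, this reduces to computing $\varepsilon_j\cdot(e_j' D_{(i,r)})$. The case $j=i$ is the Reynolds computation above. For $j$ with $S_j<_\scrJ S_i$, $\varepsilon_j\in kS_j\subseteq J_{i-1}$, and the apex property $J_{i-1}\cdot D_{(i,r)}=\{0\}$ (inherited from the isomorphism $\Delta_{(i,r)}\cong J_i\tilde{f}_{(i,r)}/J_{i-1}\tilde{f}_{(i,r)}$ established in Theorem~\ref{thm main}) handles this case. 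The remaining case $j\neq i$ with $S_j\not<_\scrJ S_i$ is the subtle point; I would argue via a counting strategy. Since $\Inn(G_l)\trianglelefteq\Aut(G_l)$, the Reynolds idempotent $\varepsilon_l$ is central in $k_\kappa\Gamma_{e_l}$, so $\varepsilon_l\cdot k_\kappa\Gamma_{e_l}\cdot\varepsilon_l\cong k\Out(G_l)$; combined with the fact that $\varepsilon_l\cdot kJ_{e_l}\cdot\varepsilon_l$ is nilpotent (inherited from $kJ_{e_l}$), the corner algebra $\varepsilon_l A\varepsilon_l$ has semisimple quotient isomorphic to $k\Out(G_l)$, and its simple modules biject with simple $k\Out(G_l)$-modules. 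Green's condensation theory then bounds the number of simple $\varepsilon A\varepsilon$-modules above by $\sum_l|\Irr(k\Out(G_l))|$, which is exactly the cardinality of $\{(i,r)\in\Lambda:\Inn(G_i)\text{ acts trivially on }T_{(i,r)}\}$. The easy direction provides the reverse inequality via an injection into $\{(i,r):\varepsilon\cdot D_{(i,r)}\neq\{0\}\}$, and the resulting equality of cardinalities forces the biconditional.
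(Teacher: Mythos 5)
Your treatment of parts (a), (b), and (d) is sound and amounts to the same routine verifications that the paper delegates to \cite{BDII}; your converse for (d) via $L*L^\circ=\Delta(G_i)$ with $L=(G_i,1,\phi,P,K)$ is a minor variant of the paper's construction using the idempotent $(P,K,\id,P,K)$, and both work. The ``if'' direction of (c) via the Reynolds idempotent and Clifford theory is also correct.

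The counting strategy for the ``only if'' direction of (c) has a genuine gap. You assert that $\varepsilon_l\cdot kJ_{e_l}\cdot\varepsilon_l$ is nilpotent ``inherited from $kJ_{e_l}$'' and conclude that $\varepsilon_l A\varepsilon_l$ has exactly $|\Irr(k\Out(G_l))|$ simple modules. But $kJ_{e_l}$ is not a nilpotent ideal of $e_l'Ae_l'$: it is merely a complement to $k_\kappa\Gamma_{e_l}$ and strictly contains the Jacobson radical in general, precisely because $e_l'Ae_l'$ has simple modules $e_l'D_{(j,s)}$ for every $(j,s)$ with $S_j\le_\scrJ S_l$, not only for $j=l$. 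Correspondingly, by condensation inside $\varepsilon A\varepsilon$, the simple $\varepsilon_l A\varepsilon_l$-modules are the evaluations $\varepsilon_l N\cong N(G_l)$ of simple biset functors $N$, and a simple biset functor $S_{G,V}$ has $S_{G,V}(G_l)\neq\{0\}$ for many groups $G_l$ other than its minimal group $G$. So the number of simple $\varepsilon_l A\varepsilon_l$-modules typically far exceeds $|\Irr(k\Out(G_l))|$, the claimed upper bound $\sum_l|\Irr(k\Out(G_l))|$ on the number of simple $\varepsilon A\varepsilon$-modules does not follow, and the equality-by-counting argument collapses. What is actually needed in the remaining case $S_j>_\scrJ S_i$ is a concrete computation showing that if $\Inn(G_i)$ acts nontrivially on $T_{(i,r)}$ then $\varepsilon_j D_{(i,r)}=\{0\}$; this is the content of \cite[Corollary~7.4]{BDII}, and the flavour of argument required is visible in Lemma~\ref{lemma inner} of the present paper, where character sums over $\stab(L)(1\times I_j)$ and $(I_i\times 1)\stab(L)$ are compared directly.
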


\begin{proof}
Assertions (a) and (b) follow immediately from \cite[Proposition~6.3, Proposition~6.4]{BDII}, \cite[Lemma~2.1]{LS}, and the definition of $\kappa$.

\smallskip
Part (c) follows immediately from \cite[Corollary~7.4]{BDII} and its proof.

\smallskip
To prove part (d), note that, in the notation of \ref{nota ghost}, we have 
$e_j=(G_j,1,\id,G_j,1)$.
Suppose first that $S_i\leq_\scrJ S_j$, that is, $S*e_i*S_i\subseteq S*e_j*S$, by \ref{nota twisted cat alg}(b).
Thus $e_i=L*e_j*M$, for some $L,M\in S$. But this implies that $G_i$ is isomorphic to a subquotient of $G_j$, see \cite[Lemma~2.7]{BDII}.

Conversely, suppose that there is a section $(P,K)$ of $G_j$ such that $P/K\cong G_i$. Then $e:=(P,K,\id,P,K)$ is an idempotent in $\catC$ with $\scrJ(e)=\scrJ(e_i)=S_i$, by part~(a). Moreover, $e=e*e_j*e$, thus $S*e*S\subseteq S*e_j*S$, implying
$S_i=\scrJ(e_i)=\scrJ(e)\leq_\scrJ\scrJ(e_j)=S_j$.
\end{proof}

\begin{notation}\label{nota J-classes of C}
(a)\, For $i\in\{1,\ldots,n\}$, let $e_i:=\Delta(G_i)$ and set $S_i:=\scrJ(e_i)$, as in Proposition~\ref{prop J-classes of C}.
In consequence of Proposition~\ref{prop J-classes of C}, $S_1,\ldots,S_n$ are then precisely the distinct $\scrJ$-classes of $\catC$, and $e_i$ is an idempotent endomorphism in $S_i$, for $i\in\{1,\ldots,n\}$.

Also, for $i\in\{1,\ldots,n\}$, we shall from now on identify the group $\Gamma_{e_i}$ with the automorphism group
$\Aut(G_i)$, and the twisted group algebra $k_\kappa\Gamma_{e_i}$ with the untwisted group algebra $k\Aut(G_i)$, via the isomorphisms in Proposition~\ref{prop J-classes of C}(b). In particular, every $k_\kappa\Gamma_{e_i}$-module can and will from now on
be viewed as a $k\Aut(G_i)$-module.

\smallskip
(b)\, Suppose that $i,j\in\{1,\ldots,n\}$ are such that $S_j<_\scrJ S_i$, so that we have the $\Gamma_{e_i}\times\Gamma_{e_j}$-action
on $S_j\cap e_i*S*e_j$ introduced in \ref{nota perm action}. Thus, via the isomorphisms $\Gamma_{e_i}\cong \Aut(G_i)$ and
$\Gamma_{e_j}\cong \Aut(G_j)$, we also have a left $\Aut(G_i)\times \Aut(G_j)$-action on
$S_j\cap e_i*S*e_j$ via
\begin{equation}\label{eqn conj L}
{}^{(\alpha,\beta)}L:=\Delta_\alpha(G_i)*L*\Delta_{\beta^{-1}}(G_j)\quad (\alpha\in\Aut(G_i),\beta\in\Aut(G_j),L\in S_j\cap e_i*S*e_j)\,.
\end{equation}
As before, we shall denote the stabilizer of $L\in S$ in $\Aut(G_i)\times\Aut(G_j)$ simply by $\stab(L)$, whenever $i$ and $j$ are apparent from the context.
\end{notation}

\begin{remark}\label{rem untwisted}
(a)\, 
By Proposition~\ref{prop J-classes of C}(b), we are able to apply Corollary~\ref{cor cond (ii)'}.
So, suppose that $(i,r),(j,s)\in\Lambda$ are such that $S_j<_\scrJ S_i$. Then, by Corollary~\ref{cor cond (ii)'},
Proposition~\ref{prop cond (ii)} and Proposition~\ref{prop cond (ii)'}, the following are equivalent:

\smallskip
\quad (i)\, $f_{(i,r)}\cdot J_j\cdot f_{(j,s)}\not\subseteq J_{j-1}$;

\smallskip
\quad (ii)\, $f_{(j,s)}\cdot J_j\cdot f_{(i,r)}\not\subseteq J_{j-1}$;

\smallskip
\quad (iii)\, there is some $L\in S_j\cap e_i*S*e_j$ with $f_{(i,r)}\cdot L\cdot f_{(j,s)}\neq 0$;

\smallskip
\quad (iv)\, there is some $M\in S_j\cap e_j*S*e_i$ with $f_{(j,s)}\cdot M\cdot f_{(i,r)}\neq 0$.

\smallskip
Note also that the set $S_j\cap e_i*S*e_j$ consists precisely of those subgroups $(P,K,\eta,G_j,1)$ of $G_i\times G_j$, where $(P,K)$ is a section of $G_i$, and $\eta:G_j\to P/K$ is an isomorphism.

\smallskip
(b)\, In the proof of Lemma~\ref{lemma inner} below we shall often only be interested to see whether certain products of elements in $A=k_\kappa\catC$ are non-zero, without determining the coefficients  at the standard basis elements explicitly. Therefore, given
$a,b\in A$,  we shall write $a\sim b$ if there is some $\lambda\in k^\times$ such that $a=\lambda b$.

With this convention we, in particular, deduce the following description of the block idempotent $f_{(i,r)}$ of $k_\kappa\Gamma_{e_i}$:
view $T_{(i,r)}$ as a simple $k\Aut(G_i)$-module via the isomorphism (\ref{eqn untwisted iso}), and let $\chi_{(i,r)}$ be the
character of $\Aut(G_i)$ afforded by $T_{(i,r)}$. Then the corresponding
block idempotent of $k\Aut(G_i)$ is
$$f'_{(i,r)}:=\frac{\chi_{(i,r)}(1)}{u^2v|\Aut(G_i)|}\sum_{\alpha\in \Aut(G_i)}\chi_{(i,r)}(\alpha^{-1})\alpha\,,$$
where $\chi_{(i,r)}=u(\psi_1+\cdots +\psi_v)$ is a decomposition into absolutely irreducible characters over a suitable extension field of $k$. Thus, applying (\ref{eqn untwisted iso}) again, we get 
\begin{equation}\label{eqn f_ir}
f_{(i,r)}= \frac{|G_i|\cdot \chi_{(i,r)}(1)}{u^2v|\Aut(G_i)|}\sum_{\alpha\in \Aut(G_i)}\chi_{(i,r)}(\alpha^{-1})\Delta_\alpha(G_i)    \sim \sum_{\alpha\in\Aut(G_i)}\chi_{(i,r)}(\alpha^{-1})\Delta_\alpha(G_i)\,.
\end{equation}
\end{remark}

The next lemma will be the key step towards establishing Theorem~\ref{thm condense}, our main result of this section.

\begin{lemma}\label{lemma inner}
Let $(i,r),(j,s)\in \Lambda$ be such that $S_j<_\scrJ S_i$, and suppose that $L\in S_j\cap e_i* S* e_j$ is such that $f_{(i,r)}\cdot L\cdot f_{(j,s)}\neq 0$. Then one has

\smallskip
{\rm (a)}\, $\stab_{\Aut(G_i)\times\Aut(G_j)}(L)\cdot(1\times \Inn(G_j))\leq (\Inn(G_i)\times 1)\cdot\stab_{\Aut(G_i)\times\Aut(G_j)}(L)$;

\smallskip
{\rm (b)}\, if $\Inn(G_i)$ acts trivially on the simple $k\Aut(G_i)$-module $T_{(i,r)}$ then $\Inn(G_j)$ acts trivially on the simple $k\Aut(G_j)$-module $T_{(j,s)}$.
\end{lemma}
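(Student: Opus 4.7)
For part (a), I would unwind the action of $\Aut(G_i)\times\Aut(G_j)$ on $L$ explicitly. Because $L\in S_j\cap e_i*S*e_j$, Remark~\ref{rem untwisted}(a) forces $L=(P,K,\eta,G_j,1)\le G_i\times G_j$, and in particular $p_2(L)=G_j$. A short calculation with the $*$-product of~\ref{nota ghost}(b) shows that ${}^{(\alpha,\beta)}L=\{(\alpha(p),\beta(h)):(p,h)\in L\}$; that is, $(\alpha,\beta)$ acts on $L$ componentwise. Consequently, conjugation in $G_i\times G_j$ by any $(g,h)\in L$ stabilizes $L$, so $(c_g,c_h)\in\stab(L)$. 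Since $p_2(L)=G_j$, for each $h\in G_j$ there is some such $g\in G_i$, and then $(1,c_h)=(c_g^{-1},1)\cdot(c_g,c_h)\in(\Inn(G_i)\times 1)\cdot\stab(L)$. Left-multiplying the resulting inclusion $1\times\Inn(G_j)\subseteq(\Inn(G_i)\times 1)\cdot\stab(L)$ by $\stab(L)$, and using that $\Inn(G_i)\times 1$ is normal in $\Aut(G_i)\times\Aut(G_j)$, yields (a). Note that the hypothesis $f_{(i,r)}\cdot L\cdot f_{(j,s)}\neq 0$ is not actually needed here.

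For (b), the argument splits into two steps. The first step converts the hypothesis into a non-zero $\stab(L)$-equivariant homomorphism $\varphi\colon T_{(i,r)}\to T_{(j,s)}$, where $\stab(L)$ acts on $T_{(i,r)}$ via its projection to $\Aut(G_i)$ and on $T_{(j,s)}$ via its projection to $\Aut(G_j)$. Under Remark~\ref{rem trivial alpha} and Corollary~\ref{cor cond (ii)}, the bimodule $e_i'(A/J_{j-1})e_j'$ becomes a left $k[\Aut(G_i)\times\Aut(G_j)]$-module which decomposes as a direct sum of permutation modules $\Ind_{\stab(M)}^{\Aut(G_i)\times\Aut(G_j)}(k)$ as $M$ runs through orbit representatives on $S_j\cap e_i*S*e_j$. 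Because $f_{(i,r)}\cdot L\cdot f_{(j,s)}$ lies in the orbit summand indexed by $L$, its non-vanishing implies that $T_{(i,r)}\otimes T_{(j,s)}^*$ is a direct summand of $\Ind_{\stab(L)}^{\Aut(G_i)\times\Aut(G_j)}(k)$, and Frobenius reciprocity converts this into the desired $\varphi$.

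The second step combines $\varphi$ with part~(a) and the hypothesis that $\Inn(G_i)$ acts trivially on $T_{(i,r)}$. For each $h\in G_j$, (a) supplies some $g\in G_i$ with $(c_g,c_h)\in\stab(L)$, so equivariance gives $c_h\cdot\varphi(v)=\varphi(c_g\cdot v)=\varphi(v)$ for every $v\in T_{(i,r)}$; thus $\mathrm{im}(\varphi)\subseteq T_{(j,s)}^{\Inn(G_j)}$. Since $\Inn(G_j)$ is normal in $\Aut(G_j)$, this fixed-point space is an $\Aut(G_j)$-submodule of the simple module $T_{(j,s)}$, so simplicity together with $\varphi\neq 0$ forces $T_{(j,s)}^{\Inn(G_j)}=T_{(j,s)}$, proving (b). The main subtlety is the first step of (b): one must ensure that the specific non-vanishing $f_{(i,r)}\cdot L\cdot f_{(j,s)}\neq 0$ produces an intertwiner with respect to $\stab(L)$ itself, rather than the stabilizer of some other orbit representative; this requires carefully tracking the orbit decomposition of $e_i'(A/J_{j-1})e_j'$ and the action of the block idempotents through~(\ref{eqn untwisted iso}) and~(\ref{eqn f_ir}).
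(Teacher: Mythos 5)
Your proof of part~(a) is essentially the paper's argument, stated more transparently: both hinge on the observation that for each $h\in G_j$ one can find $g\in G_i$ with $(g,h)\in L$ (using $p_2(L)=G_j$) and that conjugation by $(g,h)\in L$ stabilizes $L$. The paper grinds through the Goursat quintuple $(\alpha(P),\alpha(K),\bar\alpha\circ\eta\circ\beta^{-1},G_j,1)=(P,K,\eta,G_j,1)$, whereas you shortcut this by noting that the action is componentwise and a subgroup is closed under conjugation by its own elements; the payoff is a cleaner computation at no loss. You are also right that the hypothesis $f_{(i,r)}\cdot L\cdot f_{(j,s)}\neq 0$ is not used in~(a).

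Part~(b) is where you genuinely diverge from the paper, and your route is correct. The paper argues by contradiction and via explicit character sums: it shows that $\varepsilon_i f_{(i,r)}\sim f_{(i,r)}$, assumes $\varepsilon_j f_{(j,s)}=0$, and then, by expanding $f_{(i,r)}\cdot L\cdot f_{(j,s)}\varepsilon_j$ and $\varepsilon_i f_{(i,r)}\cdot L\cdot f_{(j,s)}$ in the standard basis (using~(\ref{eqn f_ir}) and~(\ref{eqn epsilon})), reduces the claim to showing $(\chi_{(i,r)}^*\times\chi_{(j,s)},1)_{\stab(L)(1\times I_j)}=0\neq(\chi_{(i,r)}^*\times\chi_{(j,s)},1)_{(I_i\times 1)\stab(L)}$, which contradicts the subgroup inclusion from~(a) via monotonicity of fixed-point spaces. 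Your argument instead uses Corollary~\ref{cor cond (ii)} and Frobenius reciprocity to extract a nonzero $\stab(L)$-intertwiner, pushes the trivial $\Inn(G_i)$-action through it using the elements $(c_g,c_h)\in\stab(L)$ from~(a), lands the image in the $\Aut(G_j)$-submodule $T_{(j,s)}^{\Inn(G_j)}$, and finishes by simplicity. This is a direct (non-contradiction) proof, more module-theoretic and less calculational; what it buys is transparency about where part~(a) is actually used. The one technical point you flag — that the non-vanishing really does land in the orbit summand of $L$ and hence produces an intertwiner for $\stab(L)$ itself — is handled correctly because $f_{(i,r)}\cdot L\cdot f_{(j,s)}$ is a linear combination of ${}^{(\alpha,\beta)}L$, all in a single $\Aut(G_i)\times\Aut(G_j)$-orbit, so the central idempotents respect the orbit decomposition of $e_i'(A/J_{j-1})e_j'$.
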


\begin{proof}
For ease of notation, set $A_i:=\Aut(G_i)$, $A_j:=\Aut(G_j)$, $I_i:=\Inn(G_i)$, and $I_j:=\Inn(G_j)$.

Since $L\in S_j\cap e_i*S*e_j$, we deduce from Proposition~\ref{prop J-classes of C} that $L=(P,K,\eta,G_j,1)$, for some $1\leq K\unlhd P\leq G_i$.

\smallskip
To prove (a), note first that $\stab(L)(1\times I_j)$ and $(I_i\times 1)\stab(L)$ are indeed subgroups of $A_i\times A_j$, since $I_i\unlhd A_i$ and $I_j\unlhd A_j$. Note further that it suffices to show that $1\times I_j\leq (I_i\times 1)\stab(L)$.

For $(\alpha,\beta)\in A_i\times A_j$, we have
\begin{align*}
(\alpha,\beta)\in \stab(L)&\Leftrightarrow \Delta_\alpha(G_i)*L*\Delta_{\beta^{-1}}(G_j)=L\\
&\Leftrightarrow (\alpha(P),\alpha(K),\bar{\alpha}\circ \eta\circ \beta^{-1},G_j,1)=(P,K,\eta,G_j,1)\,,
\end{align*}
where $\bar{\alpha}$ is the isomorphism $P/K\to \alpha(P)/\alpha(K)$ induced by $\alpha$.

Now, given $\beta\in\Inn(G_j)$, there is some $g\in G_j$ with $\beta=c_g$. Let 
$h\in P\leq G_i$ be such that $\eta(g)=hK$, and set $\alpha:=c_h\in\Inn(G_i)$. Since $h\in P$ and $K\unlhd P$, we get $\alpha(P)=P$,
$\alpha(K)=K$ as well as 
$$(\bar{\alpha}\circ\eta\circ\beta^{-1})(x)=\bar{\alpha}(\eta(g^{-1}xg))=\bar{\alpha}(h^{-1}K\cdot \eta(x)\cdot hK)=hK\cdot h^{-1}K\cdot \eta(x)\cdot hK\cdot h^{-1}K =\eta(x)\,,$$
for all $x\in G_j$. Thus $(\alpha,\beta)\in\stab(L)$, and 
$$(1,\beta)=(\alpha,\beta)\cdot (\alpha^{-1},1)\in \stab(L)(I_i\times 1)\,,$$
implying $1\times I_j\leq \stab(L)(I_i\times 1)$. This proves assertion (a).

\smallskip
To prove assertion~(b), recall from (\ref{eqn f_ir}) that
$$f_{(i,r)}\sim\sum_{\alpha\in A_i}\chi_{(i,r)}(\alpha^{-1})\Delta_\alpha(G_i)\quad\text{ and }\quad f_{(j,s)}\sim\sum_{\beta\in A_j}\chi_{(j,s)}(\beta^{-1})\Delta_\beta(G_j)\,,$$
and recall from Remark~\ref{rem conn with biset functors} that
\begin{equation}\label{eqn epsilon}
\varepsilon_i=|Z(G_i)|\cdot\sum_{\alpha\in I_i}\Delta_\alpha(G_i)
\end{equation}
is an idempotent in $k_\kappa\Gamma_{e_i}$ that, up to a  non-zero scalar, corresponds under the isomorphism in (\ref{eqn untwisted iso}) to the principal block idempotent of $kI_i$. Since $I_i\unlhd A_i$, the element $\varepsilon_i$, viewed in $kA_i$, is stable under $A_i$-conjugation. Thus, $\varepsilon_i$ is a central idempotent of $k_\kappa \Gamma_{e_i}$. Similarly, $\varepsilon_j$ is an idempotent in $Z(k_\kappa\Gamma_{e_j})$.

\smallskip
Now, assume that $I_i$ acts trivially on $T_{(i,r)}$, but $I_j$ does not act trivially on $T_{(j,s)}$. Then we get 
$0\neq \varepsilon_i\cdot T_{(i,r)}=\varepsilon_i f_{(i,r)}\cdot T_{(i,r)}$, thus $\varepsilon_if_{(i,r)}\neq 0$ and
$$\varepsilon_if_{(i,r)}\sim f_{(i,r)}\sim f_{(i,r)}\varepsilon_i\,.$$
On the other hand, we have 
$$\varepsilon_jf_{(j,s)}=0=f_{(j,s)}\varepsilon_j\,;$$ 
for otherwise we would have $ \varepsilon_jf_{(j,s)}\sim f_{(j,s)}\sim f_{(j,s)}\varepsilon_j$,  and so $I_j$ would act trivially on $T_{(j,s)}=f_{(j,s)}\cdot T_{(j,s)}=\varepsilon_jf_{(j,s)}\cdot T_{(j,s)}$, contradicting our assumption.

Therefore, we also have $0=f_{(i,r)}\cdot L\cdot f_{(j,s)}\varepsilon_j$ and $0\neq f_{(i,r)}\cdot L\cdot f_{(j,s)}\sim \varepsilon_if_{(i,r)}\cdot L\cdot f_{(j,s)}$. 

\smallskip
Our final step will be to show that
\begin{equation}\label{eqn char prod}
(\chi_{(i,r)}^*\times \chi_{(j,s)},1)_{\stab(L)(1\times I_j)}=0\neq (\chi_{(i,r)}^*\times \chi_{(j,s)},1)_{(I_i\times 1)\stab(L)}\,,
\end{equation}
which will then, by (a), lead to a contradiction completing the proof of (b).
Here $1$ simply denotes the trivial character of $\stab(L)(1\times I_j)$ and $(I_i\times 1)\stab(L)$, respectively.

\smallskip
By (\ref{eqn f_ir}) and (\ref{eqn epsilon}), we have
\begin{align}\nonumber
0&=f_{(i,r)}\cdot L\cdot f_{(j,s)}\varepsilon_j\sim\sum_{\alpha\in A_i}\sum_{\beta\in A_j}\sum_{g\in G_j}\chi_{(i,r)}(\alpha^{-1})\chi_{(j,s)}(\beta^{-1}) \cdot {}^{(\alpha,\beta^{-1})}L*\Delta_{g}(G_j)\\\nonumber
&=\sum_{\alpha\in A_i}\sum_{\beta\in A_j}\sum_{g\in G_j}\chi_{(i,r)}(\alpha^{-1})\chi_{(j,s)}(\beta^{-1}) \cdot  {}^{(\alpha,c_{g^{-1}}\circ\beta^{-1})}L\\\label{eqn fLf}
&=\sum_{\alpha\in A_i}\sum_{\beta\in A_j}\sum_{g\in G_j}\chi_{(i,r)}(\alpha^{-1})\chi_{(j,s)}(\beta) \cdot  {}^{(\alpha,c_{g^{-1}}\circ\beta)}L\,.
\end{align}
Fixing $(\alpha_0,\beta_0)\in A_i\times A_j$, the coefficient at ${}^{(\alpha_0,\beta_0)}L$ in (\ref{eqn fLf}) equals
\begin{align*}
\sum_{\substack{\alpha\in A_i\\ \beta\in A_j\\ g\in G_j\\ {}^{(\alpha,c_g^{-1}\circ\beta)}L={}^{(\alpha_0,\beta_0)}L}}\chi_{(i,r)}(\alpha^{-1})\chi_{(j,s)}(\beta)&=\sum_{g\in G_j}\sum_{(\sigma,\tau)\in\stab(L)}\chi_{(i,r)}(\sigma^{-1}\circ\alpha_0^{-1})\chi_{(j,s)}(c_g\circ\beta_0\circ\tau)\\
&=\sum_{g\in G_j}\sum_{(\sigma,\tau)\in\stab(L)}\chi_{(i,r)}(\sigma^{-1}\circ\alpha_0^{-1})\chi_{(j,s)}(\beta_0\circ\tau\circ c_g)\\
&\sim\sum_{(\sigma,\tau)\in\stab(L)(1\times I_j)}\chi_{(i,r)}(\sigma^{-1}\circ\alpha_0^{-1})\chi_{(j,s)}(\beta_0\circ\tau)\\
&=(\chi_{(i,r)}^*\times\chi_{(j,s)})\left((\alpha_0,\beta_0)\cdot (\stab(L)(1\times I_j))^+\right)\,.
\end{align*}
Here $(\stab(L)(1\times I_j))^+:=\sum_{(\sigma,\tau)\in\stab(L)(1\times I_j)}(\sigma,\tau)\in k[A_i\times A_j]$.

Hence, altogether this yields
$$0=f_{(i,r)}\cdot L\cdot f_{(j,s)}\varepsilon_j\sim\sum_{\substack{(\alpha,\beta)\in\\ [A_i\times A_j/\stab(L)]}}(\chi_{(i,r)}^*\times\chi_{(j,s)})\left((\alpha,\beta)\cdot (\stab(L)(1\times I_j))^+\right)\cdot {}^{(\alpha,\beta)}L\,,$$
where $[A_i\times A_j/\stab(L)]$ denotes a set of representatives of the left cosets $A_i\times A_j/\stab(L)$.
Thus,  we have $0=f_{(i,r)}\cdot L\cdot f_{(j,s)}\varepsilon_j$ if and only if $(\chi_{(i,r)}^*\times\chi_{(j,s)})\left((\alpha,\beta)\cdot (\stab(L)(1\times I_j))^+\right)=0$, for all $(\alpha,\beta)\in A_i\times A_j$. By \cite[Lemma~7.3]{BDII}, the latter condition is in turn satisfied if and only if 
$(\chi_{(i,r)}^*\times\chi_{(j,s)})\left((\stab(L)(1\times I_j))^+\right)=0$, that is, if and only if 
$(\chi_{(i,r)}^*\times \chi_{(j,s)},1)_{\stab(L)(1\times I_j)}=0$.

\smallskip
A completely analogous calculation gives
$$0\neq \varepsilon_i f_{(i,r)}\cdot L\cdot f_{(j,s)}\sim\sum_{\substack{(\alpha,\beta)\in\\ [A_i\times A_j/\stab(L)]}}(\chi_{(i,r)}^*\times\chi_{(j,s)})\left((\alpha,\beta)((I_i\times 1)\stab(L))^+\right)\cdot {}^{(\alpha,\beta)}L\,,$$
so that $0\neq \varepsilon_i f_{(i,r)}\cdot L\cdot f_{(j,s)}$ holds if and only if there is some $(\alpha,\beta)\in A_i\times A_j$ with
$(\chi_{(i,r)}^*\times\chi_{(j,s)})\left((\alpha,\beta)((I_i\times 1)\stab(L))^+\right)\neq 0$. By \cite[Lemma~7,3]{BDII} again, this is equivalent
to $(\chi_{(i,r)}^*\times\chi_{(j,s)})\left(((I_i\times 1)\stab(L))^+\right)\neq 0$, which is equivalent to
$(\chi_{(i,r)}^*\times \chi_{(j,s)},1)_{(I_i\times 1)\stab(L)}\neq 0$.

\smallskip
To summarize, we have now established (\ref{eqn char prod}), which completes the proof of assertion~(b).
\end{proof}

\begin{theorem}\label{thm condense}
For $i\in\{1,\ldots,n\}$, let $\varepsilon_i:=|Z(G_i)|\cdot\sum_{\alpha\in\Inn(G_i)}\Delta_\alpha(G_i)$, and let further $\varepsilon:=\varepsilon_\catC:=\sum_{i=1}^n\varepsilon_i$ (see Remark~\ref{rem conn with biset functors}). Then the following hold:

\smallskip
{\rm (a)}\, The twisted category algebra $A=k_\kappa\catC$ is quasi-hereditary, both with respect to $(\Lambda,\leq)$ and to $(\Lambda,\unlhd)$.
For $(i,r)\in\Lambda$, the corresponding standard and costandard $A$-modules $\Delta_{(i,r)}$ and $\nabla_{(i,r)}$ with respect to both $\leq$ and $\unlhd$ satisfy
\begin{equation*}
  \Delta_{(i,r)}\cong A e_i'\otimes_{e_i'Ae_i'}\Ttilde_{(i,r)}\quad \text{and}\quad
  \nabla_{(i,r)}\cong \Hom_{e_i'Ae_i'}(e_i'A,\Ttilde_{(i,r)}) \cong (Ae_i'\otimes_{e_i'Ae_i'}\Ttilde_{(i,r)}^\circ)^\circ
\end{equation*}
with respect to the anti-involution $-^\circ\colon A\to A$ from \ref{nota ghost}(b).

\smallskip
{\rm (b)}\, Suppose that $(i,r),(j,s)\in\Lambda$ are such that $(i,r)\lhd (j,s)$. If $\varepsilon\cdot D_{(i,r)}\neq \{0\}$ then also $\varepsilon\cdot D_{(j,s)}\neq \{0\}$.

\smallskip
{\rm (c)}\, The element $\varepsilon$ is an idempotent in $A$. Moreover, the condensed algebra $\varepsilon A \varepsilon$ is quasi-hereditary with respect to the partial order induced by $\unlhd$ on $\Lambda':=\{(i,r)\in\Lambda\mid \varepsilon\cdot D_{(i,r)}\neq \{0\}\}$. The corresponding standard and costandard modules are precisely the modules  $\Delta_{(i,r)}':=\varepsilon\cdot \Delta_{(i,r)}$ and $\nabla_{(i,r)}':=\varepsilon\cdot \nabla_{(i,r)}$, respectively, for $(i,r)\in\Lambda'$. For every $(i,r)\in\Lambda'$, one also has an isomorphism $(\nabla_{(i,r)}')^\circ\cong \Delta_{(i,r)^\circ}'=\Delta_{(i,r^\circ)}'$ of $\varepsilon A\varepsilon$-modules.
\end{theorem}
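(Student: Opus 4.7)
My plan is to derive (a) as an immediate application of the general machinery of Sections~\ref{sec main} and \ref{sec duality}, to obtain (b) by reducing to the relation $\sqsubset$ and invoking Lemma~\ref{lemma inner}(b), and to deduce (c) from (b) via the condensation Proposition~\ref{prop qh condensed} together with a routine duality computation. For part~(a) I would first verify that all standing hypotheses are satisfied: by Proposition~\ref{prop J-classes of C}(b) one has $\Gamma_{e_i} \cong \Aut(G_i)$, whose orders are invertible in $k$ because $\mathrm{char}(k) = 0$; and the contravariant functor $-^\circ$ of \ref{nota ghost}(b), together with the explicit choice $e_i = \Delta(G_i)$ from Proposition~\ref{prop J-classes of C}(a), satisfies Hypotheses~\ref{hypo op}(i)--(v). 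The statements about $\Delta_{(i,r)}$, $\nabla_{(i,r)}$ and quasi-heredity with respect to both partial orders then follow at once from Theorem~\ref{thm main}, Corollary~\ref{cor main} and Proposition~\ref{prop costandard A-modules}.

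Part~(b) is the heart of the argument, but essentially all of the work has been shifted into Lemma~\ref{lemma inner}. Since $\unlhd$ is by definition the transitive closure of $\sqsubseteq$, an induction on the length of a $\sqsubset$-chain reduces the claim to the case $(i,r) \sqsubset (j,s)$. By Proposition~\ref{prop J-classes of C}(c), the hypothesis $\varepsilon \cdot D_{(i,r)} \neq \{0\}$ says precisely that $\Inn(G_i)$ acts trivially on $T_{(i,r)}$, and the desired conclusion $\varepsilon \cdot D_{(j,s)} \neq \{0\}$ is the analogous statement for $T_{(j,s)}$. The hypotheses of Corollary~\ref{cor cond (ii)'} are met here, so Remark~\ref{rem untwisted}(a) tells me that the two alternative conditions in Definition~\ref{defi new order}(ii) are equivalent, and that each produces some $L \in S_j \cap e_i * S * e_j$ with $f_{(i,r)} \cdot L \cdot f_{(j,s)} \neq 0$. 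Lemma~\ref{lemma inner}(b) applied to this $L$ then delivers exactly the triviality of the $\Inn(G_j)$-action on $T_{(j,s)}$ that is needed.

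For part~(c), the idempotency $\varepsilon^2 = \varepsilon$ is recorded in \cite[Example~5.15(c)]{BDII}, and the identity $\varepsilon^\circ = \varepsilon$ is immediate from $\Delta_\alpha(G_i)^\circ = \Delta_{\alpha^{-1}}(G_i)$ together with the stability of $\Inn(G_i)$ under inversion. With (a) and (b) in place, Proposition~\ref{prop qh condensed} directly yields that $\varepsilon A \varepsilon$ is quasi-hereditary with respect to the restriction of $\unlhd$ to $\Lambda'$, with standard modules $\varepsilon \Delta_{(i,r)}$ and costandard modules $\varepsilon \nabla_{(i,r)}$. The final duality $(\nabla'_{(i,r)})^\circ \cong \Delta'_{(i,r^\circ)}$ then drops out by applying the isomorphism $\varepsilon \cdot M^\circ \cong (\varepsilon \cdot M)^\circ$ of \ref{noth duals for A}(d) (with $f = \varepsilon$) to $M = \nabla_{(i,r)}$ and combining it with $\nabla_{(i,r)}^\circ \cong \Delta_{(i,r^\circ)}$ from Proposition~\ref{prop costandard A-modules}(a),(d). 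The only genuinely non-formal step in the entire argument is (b), and that in turn rests on Lemma~\ref{lemma inner}, which was the real obstacle and has already been cleared.
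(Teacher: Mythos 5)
Your proof is correct and follows essentially the same route as the paper's: part (a) via Theorem~\ref{thm main}, Corollary~\ref{cor main} and Proposition~\ref{prop costandard A-modules}; part (b) by reducing along a $\sqsubset$-chain to a single step and invoking Remark~\ref{rem untwisted}(a) to produce an $L$ to feed into Lemma~\ref{lemma inner}(b); and part (c) via Proposition~\ref{prop qh condensed} and a duality computation. The one place where your references are actually cleaner than the paper's is the final duality step in (c), where you correctly invoke \ref{noth duals for A}(d) with $f=\varepsilon$ (an idempotent that is $-^\circ$-stable but not central), whereas the paper cites the central-idempotent isomorphism (\ref{eqn dual iso}) from \ref{noth duals for A}(c).
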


\begin{proof}
Assertion~(a) is immediate from Theorem~\ref{thm main}, Corollary~\ref{cor main}, Proposition~\ref{prop costandard A-modules}, and the properties of the duality functor $-^\circ:\catC\to\catC$, see the last paragraph of \ref{nota ghost}(b). \smallskip
To prove (b), let $(i,r),(j,s)\in\Lambda$ be such that $(i,r)\lhd (j,s)$, that is, there exist $m\in\mathbb{N}$ and 
suitable $(i_0,r_0)=(i,r),(i_1,r_1),\ldots,(i_{m-1},r_{m-1}),(i_m,r_m)=(j,s)\in\Lambda$ such that
$$S_{i_{q+1}}<_\scrJ S_{i_q}\quad\text{ and }\quad f_{(i_q,r_q)}\cdot J_{i_{q+1}}\cdot f_{(i_{q+1},r_{q+1})}\not\subseteq J_{i_{q+1}-1}\,,$$
for all $q=0,\ldots,m-1$. 
Since $\varepsilon\cdot D_{(i,r)}\neq 0$, we deduce from \cite[Corollary~7.6]{BDII} that $\Inn(G_i)$ acts trivially on $T_{(i,r)}$.
By Proposition~\ref{prop cond (ii)}(b) and Lemma~\ref{lemma inner}(b), $\Inn(G_{i_1})$ acts trivially on $T_{(i_1,r_1)}$. Thus
$\varepsilon\cdot D_{(i_1,r_1)}\neq 0$, by \cite[Corollary~7.6]{BDII} again. Iteration of this argument implies $\varepsilon\cdot D_{(j,s)}\neq 0$, as claimed.

\smallskip
As already mentioned in Remark~\ref{rem conn with biset functors} and shown in \cite{BDII}, $\varepsilon$ is an idempotent in $A$.
Moreover, for every $i=1,\ldots,n$, we have $\varepsilon_i^\circ=\sum_{g\in G_i}\Delta_g(G_i)^\circ=\sum_{g\in G_i}\Delta_{g^{-1}}(G_i)=\varepsilon_i$, and thus also 
$\varepsilon^\circ=\varepsilon$.
Assertion~(c) now follows immediately from (a), (b), Proposition~\ref{prop qh condensed}, (\ref{eqn dual iso}), and Proposition~\ref{prop costandard A-modules}(a).
\end{proof}

\begin{remark}\label{rem not section closed}
(a) Theorem~\ref{thm condense}(a) remains true if one only requires that the morphisms of $\catC$ satisfy the slightly technical condition (10) in \cite{BDII}. 
But the assumption of $\catC$ being section-closed was needed in \cite[Corollary~7.6]{BDII}, and thus in the proofs of Lemma~\ref{lemma inner} and Theorem~\ref{thm condense}(b)--(c). 
So if $\catC$  satisfies condition (10) in \cite{BDII} and is section-closed then
parts (b) and (c) of Theorem~\ref{thm condense} still remain true.

\smallskip
(b) Part~(c) of Theorem~\ref{thm condense} gives a different proof of one of  the main results of Webb in \cite[Section~7]{Webb} in the case that $\catC$ is finite. Lifting the finite case to the infinite case is possible using standard techniques. In fact, if $\catC'\subseteq\catC$ is a full subcategory whose objects are again closed under taking subsections then all the constructions for $\catC'$ arise from those of $\catC$ by multiplying with the idempotent $\varepsilon_{\catC'}$. 

In order to compare our approach with the one in \cite{Webb},
we first claim that if $\varepsilon D_{(i,r)}\neq \{0\}$ then this module corresponds to the simple biset functor $S_{G_i,T_{(i,r)}}$ (in the notation in \cite{Webb}). In fact, by \cite[Theorem~4.3.10 and Lemma~4.3.9]{Bouc}, $S_{G_i, T_{(i,r)}}$ is characterized by the
following two properties:

\smallskip

(i) $G_i$ has minimal order among all group $G_j\in\Ob(\catC)$ with the property that $S_{G_i,T_{(i,r)}}(G_j)\neq \{0\}$, and

(ii) $S_{G_i,T_{(i,r)}}(G_i)\cong T_{(i,r)}$ as $k\Out(G_i)$-modules.\\
To prove the claim, recall from \cite{Webb} that evaluation of a biset functor at a group $G_j$ translates into multiplying the corresponding $\varepsilon A\varepsilon$-module with $\varepsilon_j$. 
Suppose that $\varepsilon\cdot D_{(i,r)}\neq \{0\}$.
Then, by \cite[Corollary~7.6]{BDII}, $\Inn(G_i)$ acts trivially on $T_{(i,r)}$, so that $T_{(i,r)}$ can be viewed as a simple
$k\Out(G_i)$-module.
Now, $D_{(i,r)}$ satisfies property (ii), since 
\begin{equation*}
  \varepsilon_i\varepsilon D_{(i,r)} =\varepsilon_i  D_{(i,r)}= \varepsilon_i e_i D_{(i,r)} \cong \varepsilon_i \Ttilde_{(i,r)} \cong T_{(i,r)}
\end{equation*}
as $k\Out(G_i)$-modules, by (\ref{eqn id condense}). 
In order to prove (i), suppose that $|G_j|<|G_i|$, so that $S_i\not\leq_{\scrJ}S_j$. Thus $S_j\cdot D_{(i,r)}=\{0\}$, by \cite[Theorem~1.2, Proposition~5.1]{LS}. From this we get 
$$\varepsilon_j\varepsilon D_{(i,r)}= \varepsilon_j D_{(i,r)}=\varepsilon_je_j D_{(i,r)}=\{0\}\,.$$

Therefore, also the standard modules and costandard modules constructed in \cite{Webb} must coincide with ours, since the ones constructed in \cite{Webb} are the standard modules with respect to the partial order we call $\leq$, and since $\leq$ is a refinement of $\unlhd$. However, knowing that they are also the standard modules with respect to $\unlhd$ is an improvement, since it imposes restrictions on possible composition factors occurring in the standard modules (see Example~\ref{expl S_4}). 

\smallskip
(c) The standard modules $\varepsilon\cdot\Delta_{(i,r)}$ appear also in \cite{BST} as the functors $\Lbar_{H,V}$, see the paragraph preceding \cite[Lemma~4.3]{BST} for a definition. There they play an important role in the determination of simple biset functors, but without any investigation of quasi-hereditary structures. That our standard modules coincide with these functors can also be seen from their definition using (\ref{eqn Dir}).
\end{remark}

\begin{example}\label{expl S_4}
To conclude this section, we shall illustrate some of our previous results by an explicit example. We shall, in particular, see that the relation $\sqsubseteq$ in Definition~\ref{defi new order} is in general not transitive. Furthermore, we shall show that the partial order $\leq$ on $\Lambda$ is in general a proper refinement of $\unlhd$. Throughout this example, let $k:=\mathbb{C}$.

\smallskip
(a)\, With the notation as in \ref{nota ghost}, we consider the category $\catC$ whose objects are the following finite groups:
$$G_1:=\{1\},\; G_2:=C_2,\; G_3:=C_3,\, G_4:=C_4,\; G_5:=C_2\times C_2\,,$$
$$G_6:=\mathfrak{S}_3,\; G_7:=D_8,\; G_8:=\mathfrak{A}_4,\; G_9:=\mathfrak{S}_4\,.$$
Here, $D_8$ denotes the dihedral group of order 8. In particular, $\Ob(\catC)$ is a section-closed set. 
We shall determine the relation $\sqsubseteq$ via Corollary~\ref{cor cond (ii)}(a), and encode it in Table~\ref{tab S_4}.
To this end, we first list, for each $G\in\Ob(\catC)$,
the isomorphism type of $\Aut(G)$ as well as the ordinary  irreducible $\Aut(G)$-characters. 
The $\Aut(G)$-characters that restrict trivially to $\Inn(G)$ 
are set in boldface, as these are precisely the characters leading to simple $A$-modules not annihilated by $\varepsilon$.

\smallskip
\begin{center}
\begin{tabular}{|c|c|c|l|} \hline
$i$ & $G_i$ & $\Aut(G_i)$ & $\mathrm{Irr}(\Aut(G_i))$\\\hline\hline
$1$ & $\{1\}$& $\{1\}$ & $\boldsymbol{\chi_{(1,1)}}:=1$\\\hline
$2$ & $C_2$&$\{1\}$ & $\boldsymbol{\chi_{(2,1)}}:=1$\\\hline
$3$ & $C_3$&$C_2$&$\boldsymbol{\chi_{(3,1)}}:=1, \boldsymbol{\chi_{(3,2)}}:=\sgn$\\\hline
$4$ & $C_4$& $C_2$&$\boldsymbol{\chi_{(4,1)}}:=1,\boldsymbol{\chi_{(4,2)}}:=\sgn$\\\hline
$5$ & $C_2\times C_2$&$\mathfrak{S}_3$&$\boldsymbol{\chi_{(5,1)}}:=1,\boldsymbol{\chi_{(5,2)}}:=\sgn,\boldsymbol{\chi_{(5,3)}}:=\nu_2$\\\hline
$6$ & $\mathfrak{S}_3$&$\mathfrak{S}_3$&$\boldsymbol{\chi_{(6,1)}}:=1,\chi_{(6,2)}:=\sgn,\chi_{(6,3)}:=\nu_2$\\\hline
$7$ & $D_8$&$D_8$&$\boldsymbol{\chi_{(7,1)}}:=1,\chi_{(7,2)}:=\tau,\boldsymbol{\chi_{(7,3)}}:=\mu,\chi_{(7,4)}:=\mu',\chi_{(7,5)}:=\chi$\\\hline
$8$ & $\mathfrak{A}_4$&$\mathfrak{S}_4$&$\boldsymbol{\chi_{(8,1)}}:=1,\boldsymbol{\chi_{(8,2)}}:=\sgn,\chi_{(8,3)}:=\chi_2,\chi_{(8,4)}:=\chi_3,\chi_{(8,5)}:=\chi_3'$\\\hline
$9$ & $\mathfrak{S}_4$&$\mathfrak{S}_4$&$\boldsymbol{\chi_{(9,1)}}:=1,\chi_{(9,2)}:=\sgn,\chi_{(9,3)}:=\chi_2,\chi_{(9,4)}:=\chi_3,\chi_{(9,5)}:=\chi_3'$\\\hline
\end{tabular}
\end{center}

\smallskip
Here, by abuse of notation, $1$ always denotes the trivial character, and $\sgn$ denotes the sign character, for each of the relevant groups. Moreover, $\nu_2$ is the natural
character of $\mathfrak{S}_3$ of degree 2, $\chi_3$ is the natural character of $\mathfrak{S}_4$ of degree 3, $\chi_3'=\chi_3\cdot\sgn$,
and $\chi_2$ is the unique irreducible $\mathfrak{S}_4$-character of degree 2.

As for the characters of $\Aut(D_8)\cong D_8$, we have $\deg(\tau)=\deg(\mu)=\deg(\mu')=1$ and
$\deg(\chi)=2$. Moreover, taking $D_8$ to be the Sylow 2-subgroup of $\mathfrak{S}_4$ generated by $(1,2)$ and $(1,3)(2,4)$, 
an explicit isomorphism $D_8\myiso \Aut(D_8)$ is given by the map
$$\varphi:D_8\to \Aut(D_8)\,; \begin{cases}  (1,2)\mapsto \begin{cases} (1,2)\mapsto (1,2)\\ (1,3)(2,4)\mapsto (1,4)(2,3) \end{cases}\\
                                                                  (1,3)(2,4)\mapsto\begin{cases}    (1,2)\mapsto (1,3)(2,4)\\ (1,3)(2,4)\mapsto (1,2)\,. \end{cases}
 \end{cases}$$
 With this convention, we get $\ker(\mu)=\langle  \varphi((1,2)),\varphi((3,4))  \rangle$,
 $\ker(\mu')=\langle  \varphi((1,2)(3,4)), \varphi((1,3)(2,4))\rangle$, and $\ker(\tau)$ is the unique cyclic subgroup of $\Aut(D_8)$ of order 4.

\smallskip
Now, whenever $i,j\in\{1,\ldots,9\}$ are such that $S_j<_\scrJ S_i$, that is, $G_j$ is isomorphic to a subquotient
of $G_i$, we proceed as follows, according to Corollary~\ref{cor cond (ii)}: we determine a set of representatives of the $\Aut(G_i)\times\Aut(G_j)$-orbits on
$$S_j\cap e_i*S*e_j=\{(P,K,\eta,G_j,1)\mid K\unlhd P\leq G_i: P/K\cong G_j\}\,.$$
For every such representative $L$, we decompose the permutation character $\Ind_{\stab(L)}^{\Aut(G_i)\times\Aut(G_j)}(1)$ into
a sum of irreducible characters. This determines the relation $\sqsubset$: if $\chi_{(i,r)}\times\chi_{(j,s)}^*$ is a constituent of the above permutation character then $(i,r)\sqsubset(j,s)$. We indicate this with an entry $1$ in Table~\ref{tab S_4}, and with $\cdot$ otherwise. 

As above, if $G_i\in\Ob(\catC)$ and if $\chi_{(i,r)}\in\mathrm{Irr}(\Aut(G_i))$ restricts trivially to $\Inn(G_i)$ then
we set the entries involving $\chi_{(i,r)}$ in boldface, since these characters lead precisely to the simple $A$-modules not 
annihilated by $\varepsilon$.
Note that all characters in the above table are self-dual. So, in our particular example, we have $\chi_{(j,s)}^*=\chi_{(j,s)}$, for all $(j,s)\in\Lambda$.

\begin{landscape}
\begin{table}
{\small \begin{tabular}{|c|c|c||c|c|cc|cc|ccc|ccc|ccccc|ccccc|ccccc|}\cline{3-30}
\multicolumn{2}{c|}{}&$i$&$1$&$2$&\multicolumn{2}{|c|}{$3$}&\multicolumn{2}{|c|}{$4$}&\multicolumn{3}{|c|}{$5$}&\multicolumn{3}{|c|}{$6$}&\multicolumn{5}{|c|}{$7$}&\multicolumn{5}{|c|}{$8$}&\multicolumn{5}{|c|}{$9$}\\\cline{3-30}

\multicolumn{2}{c|}{}&$G_i$&$\{1\}$&$C_2$&\multicolumn{2}{|c|}{$C_3$}&\multicolumn{2}{|c|}{$C_4$}&\multicolumn{3}{|c|}{$C_2^2$}&\multicolumn{3}{|c|}{$\mathfrak{S}_3$}&\multicolumn{5}{|c|}{$D_8$}&\multicolumn{5}{|c|}{$\mathfrak{A}_4$}&\multicolumn{5}{|c|}{$\mathfrak{S}_4$}\\\cline{3-30}

\multicolumn{2}{c|}{}&$\Aut(G_i)$&$\{1\}$&$\{1\}$&\multicolumn{2}{|c|}{$C_2$}&\multicolumn{2}{|c|}{$C_2$}&\multicolumn{3}{|c|}{$\mathfrak{S}_3$}&\multicolumn{3}{|c|}{$\mathfrak{S}_3$}&\multicolumn{5}{|c|}{$D_8$}&\multicolumn{5}{|c|}{$\mathfrak{S}_4$}&\multicolumn{5}{|c|}{$\mathfrak{S}_4$}\\\hline

$G_i$&$\Aut(G_i)$&$r$&{\bf 1}&{\bf 1}&{\bf 1}&{\bf  2}&{\bf 1}&{\bf 2}&{\bf  1}&{\bf 2}&{\bf 3}&{\bf 1}&2&3&{\bf 1}&2&{\bf 3}&4&5&{\bf 1}&{\bf 2}&3&4&5&{\bf 1}&2&3&4&5\\\hline\hline

$\{1\}$&$\{1\}$&{\bf 1}&{\bf 1}&{\bf 1}&{\bf 1}&$\cdot$&{\bf 1}&$\cdot$ &{\bf 1}&$\cdot$ &{\bf 1} & {\bf 1}&$\cdot$ &1 & {\bf 1}&$\cdot$ &{\bf 1}&$\cdot$& 1& {\bf 1}&$\cdot$ &1&1&$\cdot$&{\bf 1}&$\cdot$&1&1&$\cdot$\\\hline

$C_2$&$\{1\}$&{\bf 1}&$\cdot$& {\bf 1}&{\bf 1}&$\cdot$&{\bf 1}&$\cdot$&{\bf 1}&$\cdot$&{\bf 1}&{\bf 1}&$\cdot$& 1&{\bf 1}&$\cdot$&{\bf 1}&$\cdot$&1&{\bf 1}&$\cdot$ &1&$\cdot$&$\cdot$&{\bf 1}&$\cdot$&1&1&$\cdot$\\\hline

\multirow{2}{*}{$C_3$}&\multirow{2}{*}{$C_2$}&{\bf 1}&$\cdot$&$\cdot$&{\bf 1}&$\cdot$&$\cdot$&$\cdot$&$\cdot$&$\cdot$&$\cdot$&{\bf 1}&$\cdot$&$\cdot$&$\cdot$&$\cdot$&$\cdot$&$\cdot$&$\cdot$&{\bf 1}&$\cdot$&$\cdot$&1&$\cdot$&{\bf 1}&$\cdot$&$\cdot$&1&$\cdot$\\
&&{\bf 2}&$\cdot$&$\cdot$&$\cdot$&{\bf 1}&$\cdot$&$\cdot$&$\cdot$&$\cdot$&$\cdot$&$\cdot$&1&$\cdot$&$\cdot$&$\cdot$&$\cdot$&$\cdot$&$\cdot$&$\cdot$&{\bf 1}&$\cdot$&$\cdot$&1&$\cdot$&1&$\cdot$&$\cdot$&1\\\hline

\multirow{2}{*}{$C_4$}&\multirow{2}{*}{$C_2$}&{\bf 1}&$\cdot$&$\cdot$&$\cdot$&$\cdot$&{\bf 1}&$\cdot$&$\cdot$&$\cdot$&$\cdot$&$\cdot$&$\cdot$&$\cdot$&{\bf 1}&$\cdot$&$\cdot$&$\cdot$&$\cdot$&$\cdot$&$\cdot$&$\cdot$&$\cdot$&$\cdot$&{\bf 1}&$\cdot$&1&$\cdot$&$\cdot$\\
&&{\bf 2}&$\cdot$&$\cdot$&$\cdot$&$\cdot$&$\cdot$&{\bf 1}&$\cdot$&$\cdot$&$\cdot$&$\cdot$&$\cdot$&$\cdot$&$\cdot$&1&$\cdot$&$\cdot$&$\cdot$&$\cdot$&$\cdot$&$\cdot$&$\cdot$&$\cdot$&$\cdot$&$\cdot$&$\cdot$&$\cdot$&1\\\hline

\multirow{3}{*}{$C_2^2$}&\multirow{3}{*}{$\mathfrak{S}_3$}&{\bf 1}&$\cdot$&$\cdot$&$\cdot$&$\cdot$&$\cdot$&$\cdot$&{\bf 1}&$\cdot$&$\cdot$&$\cdot$&$\cdot$&$\cdot$&{\bf 1}&$\cdot$&{\bf 1}&$\cdot$&$\cdot$&{\bf 1}&$\cdot$&$\cdot$&$\cdot$&$\cdot$&{\bf 1}&$\cdot$&1&$\cdot$&$\cdot$\\
&&{\bf 2}&$\cdot$&$\cdot$&$\cdot$&$\cdot$&$\cdot$&$\cdot$&$\cdot$&{\bf 1}&$\cdot$&$\cdot$&$\cdot$&$\cdot$&$\cdot$&$\cdot$&{\bf 1}&$\cdot$&1&$\cdot$&{\bf 1}&$\cdot$&$\cdot$&$\cdot$&{\bf 1}&1&1&1&$\cdot$\\
&&{\bf 3}&$\cdot$&$\cdot$&$\cdot$&$\cdot$&$\cdot$&$\cdot$&$\cdot$&$\cdot$&{\bf 1}&$\cdot$&$\cdot$&$\cdot$&{\bf 1}&$\cdot$&{\bf 1}&$\cdot$&1&$\cdot$&$\cdot$&1&$\cdot$&$\cdot$&{\bf 1}&$\cdot$&1&1&$\cdot$\\\hline

\multirow{3}{*}{$\mathfrak{S}_3$}&\multirow{3}{*}{$\mathfrak{S}_3$}&{\bf 1}&$\cdot$&$\cdot$&$\cdot$&$\cdot$&$\cdot$&$\cdot$&$\cdot$&$\cdot$&$\cdot$&{\bf 1}&$\cdot$&$\cdot$&$\cdot$&$\cdot$&$\cdot$&$\cdot$&$\cdot$&$\cdot$&$\cdot$&$\cdot$&$\cdot$&$\cdot$&{\bf 1}&$\cdot$&$\cdot$&1&$\cdot$\\
&& 2&$\cdot$&$\cdot$&$\cdot$&$\cdot$&$\cdot$&$\cdot$&$\cdot$&$\cdot$&$\cdot$&$\cdot$&1&$\cdot$&$\cdot$&$\cdot$&$\cdot$&$\cdot$&$\cdot$&$\cdot$&$\cdot$&$\cdot$&$\cdot$&$\cdot$&$\cdot$&1&$\cdot$&$\cdot$&1\\
&&3&$\cdot$&$\cdot$&$\cdot$&$\cdot$&$\cdot$&$\cdot$&$\cdot$&$\cdot$&$\cdot$&$\cdot$&$\cdot$&1&$\cdot$&$\cdot$&$\cdot$&$\cdot$&$\cdot$&$\cdot$&$\cdot$&$\cdot$&$\cdot$&$\cdot$&$\cdot$&$\cdot$&1&1&1\\\hline

\multirow{5}{*}{$D_8$}&\multirow{5}{*}{$D_8$}&{\bf 1}&$\cdot$&$\cdot$&$\cdot$&$\cdot$&$\cdot$&$\cdot$&$\cdot$&$\cdot$&$\cdot$&$\cdot$&$\cdot$&$\cdot$&{\bf 1}&$\cdot$&$\cdot$&$\cdot$&$\cdot$&$\cdot$&$\cdot$&$\cdot$&$\cdot$&$\cdot$&{\bf 1}&$\cdot$&1&$\cdot$&$\cdot$\\
&&2&$\cdot$&$\cdot$&$\cdot$&$\cdot$&$\cdot$&$\cdot$&$\cdot$&$\cdot$&$\cdot$&$\cdot$&$\cdot$&$\cdot$&$\cdot$&1&$\cdot$&$\cdot$&$\cdot$&$\cdot$&$\cdot$&$\cdot$&$\cdot$&$\cdot$&$\cdot$&$\cdot$&$\cdot$&$\cdot$&1\\
&&{\bf 3}&$\cdot$&$\cdot$&$\cdot$&$\cdot$&$\cdot$&$\cdot$&$\cdot$&$\cdot$&$\cdot$&$\cdot$&$\cdot$&$\cdot$&$\cdot$&$\cdot$&{\bf 1}&$\cdot$&$\cdot$&$\cdot$&$\cdot$&$\cdot$&$\cdot$&$\cdot$&{\bf 1}&$\cdot$&1&$\cdot$&$\cdot$\\
&&4&$\cdot$&$\cdot$&$\cdot$&$\cdot$&$\cdot$&$\cdot$&$\cdot$&$\cdot$&$\cdot$&$\cdot$&$\cdot$&$\cdot$&$\cdot$&$\cdot$&$\cdot$&1&$\cdot$&$\cdot$&$\cdot$&$\cdot$&$\cdot$&$\cdot$&$\cdot$&$\cdot$&$\cdot$&$\cdot$&1\\
&&5&$\cdot$&$\cdot$&$\cdot$&$\cdot$&$\cdot$&$\cdot$&$\cdot$&$\cdot$&$\cdot$&$\cdot$&$\cdot$&$\cdot$&$\cdot$&$\cdot$&$\cdot$&$\cdot$&1&$\cdot$&$\cdot$&$\cdot$&$\cdot$&$\cdot$&$\cdot$&1&1&1&$\cdot$\\\hline

\multirow{5}{*}{$\mathfrak{A}_4$}&\multirow{5}{*}{$\mathfrak{S}_4$}&{\bf 1}&$\cdot$&$\cdot$&$\cdot$&$\cdot$&$\cdot$&$\cdot$&$\cdot$&$\cdot$&$\cdot$&$\cdot$&$\cdot$&$\cdot$&$\cdot$&$\cdot$&$\cdot$&$\cdot$&$\cdot$&{\bf 1}&$\cdot$&$\cdot$&$\cdot$&$\cdot$&{\bf 1}&$\cdot$&$\cdot$&$\cdot$&$\cdot$\\
&&{\bf 2}&$\cdot$&$\cdot$&$\cdot$&$\cdot$&$\cdot$&$\cdot$&$\cdot$&$\cdot$&$\cdot$&$\cdot$&$\cdot$&$\cdot$&$\cdot$&$\cdot$&$\cdot$&$\cdot$&$\cdot$&$\cdot$&{\bf 1}&$\cdot$&$\cdot$&$\cdot$&$\cdot$&1&$\cdot$&$\cdot$&$\cdot$\\
&&3&$\cdot$&$\cdot$&$\cdot$&$\cdot$&$\cdot$&$\cdot$&$\cdot$&$\cdot$&$\cdot$&$\cdot$&$\cdot$&$\cdot$&$\cdot$&$\cdot$&$\cdot$&$\cdot$&$\cdot$&$\cdot$&$\cdot$&1&$\cdot$&$\cdot$&$\cdot$&$\cdot$&1&$\cdot$&$\cdot$\\
&&4&$\cdot$&$\cdot$&$\cdot$&$\cdot$&$\cdot$&$\cdot$&$\cdot$&$\cdot$&$\cdot$&$\cdot$&$\cdot$&$\cdot$&$\cdot$&$\cdot$&$\cdot$&$\cdot$&$\cdot$&$\cdot$&$\cdot$&$\cdot$&1&$\cdot$&$\cdot$&$\cdot$&$\cdot$&1&$\cdot$\\
&&5&$\cdot$&$\cdot$&$\cdot$&$\cdot$&$\cdot$&$\cdot$&$\cdot$&$\cdot$&$\cdot$&$\cdot$&$\cdot$&$\cdot$&$\cdot$&$\cdot$&$\cdot$&$\cdot$&$\cdot$&$\cdot$&$\cdot$&$\cdot$&$\cdot$&1&$\cdot$&$\cdot$&$\cdot$&$\cdot$&1\\\hline

\multirow{5}{*}{$\mathfrak{S}_4$}&\multirow{5}{*}{$\mathfrak{S}_4$}&{\bf 1}&$\cdot$&$\cdot$&$\cdot$&$\cdot$&$\cdot$&$\cdot$&$\cdot$&$\cdot$&$\cdot$&$\cdot$&$\cdot$&$\cdot$&$\cdot$&$\cdot$&$\cdot$&$\cdot$&$\cdot$&$\cdot$&$\cdot$&$\cdot$&$\cdot$&$\cdot$&{\bf 1}&$\cdot$&$\cdot$&$\cdot$&$\cdot$\\
&&2&$\cdot$&$\cdot$&$\cdot$&$\cdot$&$\cdot$&$\cdot$&$\cdot$&$\cdot$&$\cdot$&$\cdot$&$\cdot$&$\cdot$&$\cdot$&$\cdot$&$\cdot$&$\cdot$&$\cdot$&$\cdot$&$\cdot$&$\cdot$&$\cdot$&$\cdot$&$\cdot$&1&$\cdot$&$\cdot$&$\cdot$\\
&&3&$\cdot$&$\cdot$&$\cdot$&$\cdot$&$\cdot$&$\cdot$&$\cdot$&$\cdot$&$\cdot$&$\cdot$&$\cdot$&$\cdot$&$\cdot$&$\cdot$&$\cdot$&$\cdot$&$\cdot$&$\cdot$&$\cdot$&$\cdot$&$\cdot$&$\cdot$&$\cdot$&$\cdot$&1&$\cdot$&$\cdot$\\
&&4&$\cdot$&$\cdot$&$\cdot$&$\cdot$&$\cdot$&$\cdot$&$\cdot$&$\cdot$&$\cdot$&$\cdot$&$\cdot$&$\cdot$&$\cdot$&$\cdot$&$\cdot$&$\cdot$&$\cdot$&$\cdot$&$\cdot$&$\cdot$&$\cdot$&$\cdot$&$\cdot$&$\cdot$&$\cdot$&1&$\cdot$\\
&&5&$\cdot$&$\cdot$&$\cdot$&$\cdot$&$\cdot$&$\cdot$&$\cdot$&$\cdot$&$\cdot$&$\cdot$&$\cdot$&$\cdot$&$\cdot$&$\cdot$&$\cdot$&$\cdot$&$\cdot$&$\cdot$&$\cdot$&$\cdot$&$\cdot$&$\cdot$&$\cdot$&$\cdot$&$\cdot$&$\cdot$&1\\\hline
\end{tabular}}
\caption{The relation $\sqsubseteq$ in Example~\ref{expl S_4}}
\label{tab S_4}
\end{table}
\end{landscape}

For instance, letting $i:=9$ and $j:=6$, we have $G_i=\mathfrak{S}_4$ and $G_j=\mathfrak{S}_3$, thus
$S_j<_\scrJ S_i$. Representatives of the $\Aut(G_i)\times \Aut(G_j)$-orbits 
of $S_j\cap e_i*S*e_j$ are given by $L:=\Delta(\mathfrak{S}_3)$ and $M:=(\mathfrak{S}_4,V_4,\eta,\mathfrak{S}_3,1)$,
where $V_4$ is the normal Klein four-group in $\mathfrak{S}_4$ and $\eta:\mathfrak{S}_3\to\mathfrak{S}_4/V_4$ is
any fixed isomorphism. 
We have $\Aut(G_i)=\Inn(G_i)\cong G_i$ and $\Aut(G_j)=\Inn(G_j)\cong G_j$.
Identifying $G_i$ with $\Aut(G_i)$ and $G_j$ with $\Aut(G_j)$, we get $\stab(L)=\Delta(\mathfrak{S}_3)$
and $\stab(M)=\Delta(\mathfrak{S}_3)(1\times V_4)$. Furthermore,
$$\Ind_{\stab(L)}^{\mathfrak{S}_4\times\mathfrak{S}_3}(1)=1\times 1+\sgn\times \sgn+\chi_3\times 1+\chi_3'\times\sgn+\chi_2\times \nu_2+\chi_3\times \nu_2+\chi_3'\times\nu_2\,,$$
and
$$\Ind_{\stab(M)}^{\mathfrak{S}_4\times\mathfrak{S}_3}(1)=\chi_2\times\nu_2+1\times 1+\sgn\times \sgn\,.$$

\medskip
(b)\, Now Table~\ref{tab S_4} shows that the relation $\sqsubseteq$ on $\Lambda$
is not transitive, since $(9,2)\sqsubset (7,5)$
and $(7,5)\sqsubset (1,1)$, whereas $(9,2)\not\sqsubset (1,1)$.

\medskip
(c)\, From Table~\ref{tab S_4} we, moreover, see that the partial order $\unlhd$ on $\Lambda$ is indeed coarser than $\leq$:
consider $i=8$ and $j=3$, so that $G_i=\mathfrak{A}_4$ and $G_j=C_3$. The group $G_j$ can either be realized as a maximal
subgroup or as a minimal quotient group of $\mathfrak{A}_4$. So we infer that $(i,r)\sqsubset (j,s)$ if and only if
$(i,r)\lhd (j,s)$, for $i=8$, $j=3$, $1\leq r\leq 5$, $1\leq s\leq 2$. By Table~\ref{tab S_4}, we have
$$(i,r)\sqsubset (j,s)\Leftrightarrow (r,s)\in\{(1,1),(1,4),(2,2),(2,5)\}\,,$$
whereas $(i,r)<(j,s)$, for all $1\leq r\leq 5$, $1\leq s\leq 2$. 

After condensation with $\varepsilon$, we obtain $(i,r)\lhd (j,s)$ in $(\Lambda',\unlhd)$ if and only if $(r,s)\in\{(1,1),(2,2)\}$, but $(i,r)<(j,s)$ for all combinations of $r,s\in\{1,2\}$.

\medskip

(d)\, As indicated in Remark~\ref{rem conn with biset functors}, Theorem~\ref{thm condense}(b) does, in general, not hold
with $\leq$ instead of $\unlhd$. To see this in our current example, take $(i,r):=(9,1)$ and $(j,s):=(8,3)$. Then $(i,r)<(j,s)$, since 
$G_j=\mathfrak{A}_4$ is isomorphic to a subquotient of $G_i=\mathfrak{S}_4$. From Proposition~\ref{prop J-classes of C}(c)
we deduce that $\varepsilon\cdot D_{(i,r)}\neq \{0\}$, but $\varepsilon\cdot D_{(j,s)}= \{0\}$.
\end{example}


\section{Application II: Brauer Algebras}\label{sec brauer}

As mentioned in the introduction, several classes of diagram algebras arise naturally as twisted category algebras. 
In fact, in all these examples one deals with monoid algebras, that is, the underlying category has only one object.

Throughout this section, let $n\in\mathbb{N}$, and let $\mathfrak{S}_n$ be the symmetric group of degree $n$. Permutations
in $\mathfrak{S}_n$ will always be composed from right to left, so that, for instance, we have $(1,2)(2,3)=(1,2,3)\in\mathfrak{S}_n$, whenever $n\geq 3$.
Moreover, let $k$ be a field such that $n!$ is invertible in $k$, and let
$\delta\in k^\times$. 


\begin{nothing}\label{noth brauer}
{\bf Brauer algebras.}\, 
Consider the set $S$ of {\it $(n,n)$-Brauer diagrams}; each of these consists of $n$ {\it northern nodes}, labelled by
$1,\ldots,n$, and
$n$ {\it southern} nodes, labelled by $\bar{1},\ldots,\bar{n}$, and each node is connected by an edge to precisely one other node. Egdes connecting a pair of nothern or southern nodes are called {\it arcs}, and edges connecting a northern with a southern node are called {\it propagating lines}. In other words, the elements of $S$ can be viewed as equivalence relations of the set
$\{1,\ldots,n\}\cup\{\bar{1},\ldots,\bar{n}\}$ whose equivalence classes contain precisely two elements.

Given $(n,n)$-Brauer diagrams $t$ and $t'$, their composition $t\circ t'$ is defined by first taking the {\it concatenation of $t$ above
$t'$}, and then deleting all cycles from the resulting diagram.

\smallskip

For instance, suppose that $n=6$, $t=\{\{1,\bar{1}\},\{2,\bar{4}\},\{3,\bar{2}\},\{4,6\},\{5,\bar{3}\},\{\bar{5},\bar{6}\}\}$,
and $t'=\{\{1,\bar{1}\},\{2,\bar{2}\},\{3,4\},\{5,6\},\{\bar{3},\bar{6}\},\{\bar{4},\bar{5}\}\}$. Then we get:

\begin{center}
\begin{tikzpicture}[scale=1,transform shape,every node/.style={scale=1}]
    \begin{scope}
      \matrix (A) [matrix of math nodes,column sep=0.8 mm] 
      { \bullet & \bullet & \bullet & \bullet & \bullet & \bullet  \\[0.7cm]
        \bullet & \bullet & \bullet & \bullet & \bullet & \bullet  \\[0.8cm]
        \bullet & \bullet & \bullet & \bullet & \bullet & \bullet  \\[0.7cm]
        \bullet & \bullet & \bullet & \bullet & \bullet & \bullet  \\
      };

      \draw (A-1-1) to [out=270,in=90] (A-2-1);
      \draw (A-1-2) to [out=270,in=90]  (A-2-4);
      \draw (A-1-3) to [out=270,in=90]  (A-2-2);
      \draw (A-1-4) to [out=270,in=270] (A-1-6);
      \draw (A-1-5) to [out=270,in=90] (A-2-3);
      \draw (A-2-5) to [out=90,in=90]  (A-2-6);

      \draw[dashed] (A-2-1) -- (A-3-1);
      \draw[dashed] (A-2-2) -- (A-3-2);
      \draw[dashed] (A-2-3) -- (A-3-3);
      \draw[dashed] (A-2-4) -- (A-3-4);
      \draw[dashed] (A-2-5) -- (A-3-5);
      \draw[dashed] (A-2-6) -- (A-3-6);

      \draw (A-3-1) to (A-4-1);
      \draw (A-3-2) to (A-4-2);
      \draw (A-3-3) to [out=270,in=270] (A-3-4);
      \draw (A-3-5) to [out=270,in=270] (A-3-6);
      \draw (A-4-3) to [out=90,in=90] (A-4-6);
      \draw (A-4-4) to [out=90,in=90] (A-4-5);
    \end{scope}

   \draw[thick,|->] (3,0) -- (4,0);

    \begin{scope}[xshift=6cm]
      \matrix (B) [matrix of math nodes,column sep=0.8 mm] 
      { \bullet & \bullet & \bullet & \bullet & \bullet & \bullet  \\[1cm]
        \bullet & \bullet & \bullet & \bullet & \bullet & \bullet  \\
      };

      \draw (B-1-1) to (B-2-1);
      \draw (B-1-2) to [out=270,in=270] (B-1-5);
      \draw (B-1-3) to [out=270,in=90]  (B-2-2);
      \draw (B-1-4) to [out=270,in=270] (B-1-6);

      \draw (B-2-3) to [out=90,in=90] (B-2-6);
      \draw (B-2-4) to [out=90,in=90] (B-2-5);

          \end{scope}
  \end{tikzpicture}
\end{center}

\smallskip

Hence $t\circ t'=\{\{1,\bar{1}\},\{2,5\},\{3,\bar{2}\},\{4,6\},\{\bar{3},\bar{6}\},\{\bar{4},\bar{5}\}\}$.

\bigskip

In this way, $S$ becomes a  finite monoid whose identity element is the diagram that connects
each pair of opposite nodes by a propagating line.

The map
\begin{equation}\label{eqn delta}
\alpha:S\times S\to k^\times,\; (t,t')\mapsto \delta^{m(t,t')}\,,
\end{equation}
where $m(t,t')$ is the number of cycles in the concatenation of $t$ above $t'$, defines a 2-cocycle of the monoid $S$ with values 
in $k^\times$. The resulting twisted monoid algebra
\begin{equation}\label{eqn brauer alg}
B_n(\delta):=k_\alpha S
\end{equation}
is called the {\it Brauer algebra} of degree $n$ over $k$ with parameter $\delta$.

\medskip
The $\scrJ$-classes of the monoid $S$ have been determined by Mazorchuk in \cite{Maz}; we shall recall the result in Proposition~\ref{prop brauer} below. In order to do so, it will be convenient to use the following notation: let $d:=\lfloor\frac{n}{2}\rfloor$, and for
$i=1,\ldots,d+1$, let $e_i$ be the  diagram each of whose $n-2(d-i+1)$ leftmost northern nodes is joined
to its opposite southern node by a propagating line; the remaining $2(d-i+1)$
edges of $e_i$ are arcs, each connecting a pair of
neighbouring northern or southern nodes. That is, $e_i$ has shape

\medskip
\begin{center}
\begin{tikzpicture}[scale=1,transform shape,every node/.style={scale=1}]
   \begin{scope}
      \matrix (A) [matrix of math nodes,column sep=1 mm] 
      { \bullet & \bullet & \cdots &\bullet& \bullet & \bullet & \bullet &\bullet&\cdots&\bullet&\bullet \\[0.75cm]
        \bullet & \bullet & \cdots &\bullet & \bullet & \bullet & \bullet  &\bullet&\cdots&\bullet&\bullet \\
              };

      \draw (A-1-1) to (A-2-1);
      \draw (A-1-2) to   (A-2-2);
      \draw (A-1-4) to   (A-2-4);
      \draw (A-1-5) to [out=270,in=270] (A-1-6);
      \draw (A-1-7) to [out=270,in=270] (A-1-8);
       \draw (A-1-10) to [out=270,in=270] (A-1-11);
      \draw (A-2-5) to [out=90,in=90] (A-2-6);
       \draw (A-2-7) to [out=90,in=90] (A-2-8);
       \draw (A-2-10) to [out=90,in=90] (A-2-11);     
   \end{scope} 
 \end{tikzpicture}
\end{center}
\end{nothing}

\begin{proposition}[\protect{\cite[Theorem~7]{Maz},\cite[Section~8]{Wil}}]\label{prop brauer} Keep the notation as in \ref{noth brauer}. Then, one has the following:

\smallskip

{\rm (a)}\, Brauer diagrams $t,t'\in S$ belong to the same $\scrJ$-class if and only if they have the same number of propagating lines.
Moreover, the diagrams $e_1,\ldots,e_{d+1}$ are idempotents in $S$, and they form a set of representatives
of the distinct $\scrJ$-classes of $S$. Furthermore,
$$\scrJ(e_1)<_\scrJ \scrJ(e_2)<_\scrJ\cdots <_\scrJ \scrJ(e_{d+1})\,.$$

\smallskip
{\rm (b)}\, For $i=1,\ldots,d+1$, the group $\Gamma_{e_i}$ consists of those diagrams $t\in S$ with arcs
$$\{n-2j+1,n-2j+2\},\, \{\overline{n-2j+1},\overline{n-2j+1}\}\quad (j=1,\ldots, d-i+1)\,,$$
and whose remaining northern and southern nodes are connected by propagating lines. In parti\-cular, 
there is a group isomorphism
\begin{equation}\label{eqn Gamma iso}
\mathfrak{S}_{n_i}\to\Gamma_{e_i},\; \sigma\mapsto t_\sigma\,,
\end{equation}
where $n_i:=n-2(d-i+1)$, and $t_\sigma$ is the Brauer diagram in $\Gamma_{e_i}$ with propagating lines
$\{\sigma(1),\bar{1}\},\ldots,\{\sigma(n_i),\overline{n_i}\}$.

\smallskip
{\rm (c)}\, For $i\in\{ 1,\ldots, d+1\}$ and $x,x'\in \Gamma_{e_i}$, one has
$\alpha(x,x')=\alpha(e_i,e_i)=\delta^{d-i+1}$;
in particular, $\alpha$ restricts to a constant $2$-cocycle on $\Gamma_{e_i}$, and the map
\begin{equation}\label{eqn S_n iso}
k_\alpha\Gamma_{e_i}\to k\Gamma_{e_i},\; t\mapsto \delta^{d-i+1}\cdot t
\end{equation}
is a $k$-algebra isomorphism.

\smallskip
{\rm (d)}\, For each $t\in S$, let $t^\circ$ be the diagram that is obtained 
by reflecting $t$ about the horizontal axis. Then the resulting map $-^\circ:S\to S$, $t\mapsto t^\circ$, satisfies 
Hypotheses~\ref{hypo op}(ii)--(v),
with respect to the $2$-cocycle $\alpha$  in (\ref{eqn delta}).
In particular, $S$ is a regular monoid. 
\end{proposition}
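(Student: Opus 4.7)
The plan is to verify the four assertions by direct diagrammatic arguments on Brauer diagrams, exploiting the fact that the monoid operation on $S$ is entirely determined by concatenation with deletion of cycles. Throughout, a Brauer diagram will be identified with its edge set viewed as a perfect matching on $\{1,\ldots,n\}\cup\{\bar 1,\ldots,\bar n\}$.

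For (a), the crucial numerical invariant is the number of propagating lines. I would first show that, for any $t,t'\in S$, a propagating line in $t\circ t'$ corresponds to an alternating path that uses at least one propagating line from each of $t$ and $t'$; consequently the number of propagating lines of $t\circ t'$ is bounded above by the minimum of the numbers of propagating lines of $t$ and $t'$. This forces $\scrJ(t)\leq_\scrJ\scrJ(t')$ to imply that $t$ has no more propagating lines than $t'$. For the converse, if $t$ and $t'$ both have $k$ propagating lines, I would construct explicit $u,v\in S$ witnessing $t=u\circ t'\circ v$ by choosing $u$ and $v$ so that they route the propagating endpoints and arcs of $t'$ to those of $t$. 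Idempotency $e_i\circ e_i=e_i$ and the strict chain of $\scrJ$-classes then follow by inspection, using that the numbers of propagating lines of $e_1,\ldots,e_{d+1}$ are strictly increasing.

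For (b) and (c), I would unwind the definition of $\Gamma_{e_i}$: elements $t\in e_i\circ\End_\catC(X_i)\circ e_i$ are forced to carry the same rightmost arcs as $e_i$ on both the northern and the southern side, and invertibility further forces the remaining $n_i$ northern and southern nodes to be paired by propagating lines realizing a permutation of $\{1,\ldots,n_i\}$. The assignment $\sigma\mapsto t_\sigma$ is then immediately a group isomorphism. For (c), concatenating $t_\sigma$ above $t_\tau$ creates exactly $d-i+1$ cycles from the matching right-hand arcs, independent of $\sigma$ and $\tau$, while the remaining propagating lines encode $\sigma\tau$. This yields $\alpha(t_\sigma,t_\tau)=\delta^{d-i+1}$, and the rescaling $t\mapsto \delta^{d-i+1}\cdot t$ is then readily checked to be multiplicative from $k_\alpha\Gamma_{e_i}$ onto $k\Gamma_{e_i}$.

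For (d), properties (ii) and (v) are immediate from the facts that reflection about the horizontal axis is involutive and that each $e_i$ is symmetric under this reflection. Property (iv) follows because the concatenation of $s$ above $t$ and of $t^\circ$ above $s^\circ$ differ by an overall reflection, so contain the same number of cycles. I expect the main obstacle to be (iii), the regularity property $s\circ s^\circ\circ s=s$: the approach is to trace edges through the two successive concatenations and verify that $s\circ s^\circ$ restores the northern arcs of $s$ on top and their reflections on the bottom, with propagating endpoints linked to their reflected counterparts, and that composing once more with $s$ returns precisely the original diagram after deletion of cycles. This is the only part of the proposition where any real bookkeeping is required; the argument is well known for the Brauer monoid and can either be reproduced in a few lines or invoked by citing the relevant result from \cite{Maz,Wil}.
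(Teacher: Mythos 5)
Your proof is correct. The paper gives no in-text proof of this proposition --- it is stated with a citation to Mazorchuk \cite{Maz} and Wilcox \cite{Wil} --- so your direct diagrammatic verification supplies the details that the paper delegates to references, and it follows the standard route those references take. All four parts check out: the monotonicity of the propagating-line count under composition, together with the explicit construction of intermediate diagrams, handles (a); for (b), the key point (which you have) is that a unit $t$ of the monoid $e_i\circ S\circ e_i$ lies in $\scrJ(e_i)$ and hence has exactly $n_i$ propagating lines, and this combined with $t = e_i\circ t\circ e_i$ pins down the arcs of $t$ to coincide with those of $e_i$ on both sides, leaving only a permutation's worth of freedom; the cycle count $d-i+1$ in (c) is exactly right, and the resulting rescaling $t\mapsto\delta^{d-i+1}t$ is the special case of the mechanism in Remark~\ref{rem trivial alpha} with a constant coboundary; and for (d) the reflection argument for $\alpha(s,t)=\alpha(t^\circ,s^\circ)$ and the tracing of edges through $s\circ s^\circ\circ s$ are both sound, with (iii) indeed the only place that requires any real bookkeeping. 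No gaps.
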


\begin{remark}\label{rem brauer}
In accordance with our notation in Section~\ref{sec twisted cat}, we again set $S_i:=\scrJ(e_i)$, for $i=1,\ldots,d+1$, so that, by 
Proposition~\ref{prop brauer}(a), $S_1,\ldots,S_{d+1}$ are the distinct $\scrJ$-classes of $S$.

Furthermore, for $i=1,\ldots,d+1$, we may identify $\Gamma_{e_i}$ with the symmetric group $\mathfrak{S}_{n_i}$, via the
isomorphism (\ref{eqn Gamma iso}), and the twisted group algebra
$k_\alpha\Gamma_{e_i}$ with the untwisted group algebra $k\mathfrak{S}_{n_i}$ via the isomorphism in 
(\ref{eqn S_n iso}). Note that, since we are assuming $n!\in k^\times$, we also ensure that the group orders 
$|\mathfrak{S}_{n_1}|,\ldots,|\mathfrak{S}_{n_{d+1}}|$ are invertible in $k$. Hence, the isomorphism classes of simple
$k\mathfrak{S}_{n_i}$-modules are parametrized by the partitions of $n_i$. More precisely, suppose that $\{\lambda_{(i,1)},\ldots,\lambda_{(i,l_i)}\}$ is the set of partitions of $n_i$. Then, for $r=1,\ldots,l_i$, the simple $k\mathfrak{S}_{n_i}$-module
$T_{(i,r)}$ can be chosen to be the {\it Specht $k\mathfrak{S}_{n_i}$-module} $S^{\lambda_{(i,r)}}$.
For details concerning the representation theory of symmetric groups, we refer to \cite{J} and \cite{JK}.

\smallskip

Now, Theorem~\ref{thm main} and Corollary~\ref{cor main} as well as the results from Sections~\ref{sec sqsubseteq} and \ref{sec duality} apply, and we obtain the following result.
Here we again set $d:=\lfloor\frac{n}{2}\rfloor$, so that $\Lambda=\{(i,r)\mid 1\leq i\leq d+1,\, 1\leq r\leq l_i\}$. 
\end{remark}

\begin{theorem}\label{thm brauer qh}
The Brauer algebra $B_\delta(n)$ is quasi-hereditary with respect to $(\Lambda,\unlhd)$. 
The corresponding standard $B_\delta(n)$-module labelled by $(i,r)\in\Lambda$ is
isomorphic to $\Delta_{(i,r)}:=B_\delta(n)e_i'\otimes_{e_i'B_\delta(n)e_i'} \tilde{T}_{(i,r)}$, and the
costandard $B_\delta(n)$-module labelled by $(i,r)$ is isomorphic to $\nabla_{(i,r)}\cong \Hom_{e_i'B_\delta(n)e_i'}(e_i'B_\delta(n), \Ttilde_{(i,r)})\cong  \Delta_{(i,r)}^\circ$. 
Moreover, every simple $B_\delta(n)$-module is self-dual with respect to 
the map $-^\circ:S\to S$ in Proposition~\ref{prop brauer} and 
the resulting duality introduced in \ref{noth duals}; in particular, $B_\delta(n)$ is a BGG-algebra.
\end{theorem}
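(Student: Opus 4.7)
The plan is to deduce the theorem by specializing our general machinery to the Brauer monoid. All structural hypotheses were checked in Proposition~\ref{prop brauer}: the monoid $S$ is regular (hence ``split'' in the sense of \ref{nota twisted cat alg}), the idempotents $e_1,\ldots,e_{d+1}$ of \ref{noth brauer} are pre-fixed representatives of the $\scrJ$-classes, the reflection map $t\mapsto t^\circ$ satisfies Hypotheses~\ref{hypo op} with respect to $\alpha$, and $e_i^\circ=e_i$. Since $n!$ is invertible in $k$ and each $|\Gamma_{e_i}|=n_i!$ divides $n!$, the twisted group algebras $k_\alpha\Gamma_{e_i}\cong k\mathfrak{S}_{n_i}$ are semisimple. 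Thus Theorem~\ref{thm main} applies and yields quasi-heredity of $B_\delta(n)=k_\alpha S$ with respect to $(\Lambda,\unlhd)$ together with the standard-module formula $\Delta_{(i,r)}\cong B_\delta(n)e_i'\otimes_{e_i'B_\delta(n)e_i'}\tilde{T}_{(i,r)}$, while Corollary~\ref{cor main} gives $\nabla_{(i,r)}\cong\Hom_{e_i'B_\delta(n)e_i'}(e_i'B_\delta(n),\tilde{T}_{(i,r)})$.

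Next I would establish self-duality of the simples. By Proposition~\ref{prop costandard A-modules}(a) one has $D_{(i,r)}^\circ\cong D_{(i,r^\circ)}$, where $r^\circ$ is determined by $T_{(i,r^\circ)}\cong T_{(i,r)}^\circ$ as $k_\alpha\Gamma_{e_i}$-modules. Via the isomorphism $\Gamma_{e_i}\cong\mathfrak{S}_{n_i}$ of Proposition~\ref{prop brauer}(b) together with Lemma~\ref{lemma anti}(c), the algebra anti-involution $-^\circ$ on $k_\alpha\Gamma_{e_i}$ corresponds to the standard anti-involution $\sigma\mapsto\sigma^{-1}$ on $k\mathfrak{S}_{n_i}$; hence $T_{(i,r)}^\circ$ is the left $k\mathfrak{S}_{n_i}$-module whose underlying space is $T_{(i,r)}^*$ with contragredient action. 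Since every irreducible character of $\mathfrak{S}_{n_i}$ is rational (in particular real-valued), every simple $k\mathfrak{S}_{n_i}$-module is isomorphic to its contragredient, so $T_{(i,r)}^\circ\cong T_{(i,r)}$. Therefore $r^\circ=r$ and $D_{(i,r)}^\circ\cong D_{(i,r)}$ for every $(i,r)\in\Lambda$.

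The remaining assertions now follow at once. Self-duality of all simples, combined with Remark~\ref{rem BGG}, shows that $B_\delta(n)$ is a BGG-algebra. Moreover Proposition~\ref{prop costandard A-modules}(d) specializes to $\nabla_{(i,r)}\cong\Delta_{(i,r)^\circ}^\circ=\Delta_{(i,r)}^\circ$, yielding the announced identification of the costandard module with $\Delta_{(i,r)}^\circ$ and completing the proof.

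I do not expect any real obstacle: the only input beyond our general theorems is the elementary fact that symmetric-group characters are rational, which guarantees self-duality of every simple $k\mathfrak{S}_{n_i}$-module and hence of every simple $B_\delta(n)$-module.
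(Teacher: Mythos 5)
Your argument is correct and follows essentially the same path as the paper's proof: apply Theorem~\ref{thm main}, Corollary~\ref{cor main}, and Proposition~\ref{prop costandard A-modules}, then use self-duality of simple $k\mathfrak{S}_{n_i}$-modules to conclude $(i,r)^\circ=(i,r)$ and hence self-duality of the $D_{(i,r)}$. The only cosmetic difference is that you derive self-duality of the $k\mathfrak{S}_{n_i}$-simples from rationality of symmetric-group characters, whereas the paper cites the self-duality of Specht modules (\cite[Theorem~11.5]{J}); both are valid, and the latter is slightly more robust since it works over any coefficient field without appeal to character theory.
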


\begin{proof}
By Theorem~\ref{thm main} and Proposition~\ref{prop brauer}, $B_\delta(n)$ is quasi-hereditary with respect to $(\Lambda,\unlhd)$, and the standard modules
are as claimed. Moreover, the costandard modules are as claimed, by Corollary~\ref{cor main} and Proposition~\ref{prop costandard A-modules}, taking into account that $k_\alpha\Gamma_{e_i}\cong k\mathfrak{S}_{n_i}$ and that every simple $k\mathfrak{S}_{n_i}$-module
is self-dual; see \cite[Theorem~11.5]{J}. Finally, since, by Proposition~\ref{prop costandard A-modules} again,
$D_{(i,r)}$ is the head of $\Delta_{(i,r)}$, and $D_{(i,r)}^\circ$ is isomorphic to the head of
$\Delta_{(i,r)^\circ}\cong \Delta_{(i,r^\circ)}\cong \Delta_{(i,r)}$, we get $D_{(i,r)}\cong D_{(i,r)}^\circ$.
\end{proof}

\begin{remark}\label{rem young}
It has been known that, under our assumptions on $k$ and $\delta$, the $k$-algebra $B_\delta(n)$ is quasi-hereditary, see~\cite[Theorem~1.3]{KX}. The underlying partial order on the set $\Lambda$ that is usually considered is the one in which $(i,r)$ is strictly smaller than $(j,s)$ if and only if $j<i$. As shown in Proposition~\ref{prop order brauer} (a), this partial order coincides with the partial order $\leq$ from (\ref{eqn leq}). Part~(c) of Proposition~\ref{prop order brauer} determines the partial order $\unlhd$ from Definition~\ref{defi new order} explicitly. Here, for $i=1,\ldots,d+1$, we again
identify the subgroup $\Gamma_{e_i}$ of $S$ with the symmetric group $\mathfrak{S}_{n_i}$ via the
isomorphism (\ref{eqn Gamma iso}). Furthermore, if $j\in\{1,\ldots,i-1\}$ is such that $j<i$ then we denote by
$\mathfrak{S}_{n_j}\times (\mathfrak{S}_2)^{i-j}$ the standard Young subgroup
$$\mathfrak{S}_{n_j}\times \langle (n_j+1,n_j+2)\rangle\times \langle (n_j+3,n_j+4)\rangle\times\cdots\times\langle (n_i-1,n_i)\rangle$$
of $\mathfrak{S}_{n_i}$.
\end{remark}

\begin{proposition}\label{prop order brauer}
Let $(i,r),(j,s)\in\Lambda$. Then one has the following:

\smallskip
{\rm (a)}\, $(i,r)<(j,s)$ if and only if $j<i$.

\smallskip
{\rm (b)}\, For all $x\in \Gamma_{e_i}$, $t\in e_i\circ S$ and $u\in S\circ e_i$, one has $\alpha(x,t)=\delta^{d-i+1}=\alpha(u,x)$.
If $j<i$, then $S_j\cap e_i\circ S\circ e_j$ is a transitive $\mathfrak{S}_{n_i}\times \mathfrak{S}_{n_j}$-set
via the action defined in \ref{nota perm action}
and the isomorphism (\ref{eqn Gamma iso}). Moreover, in this case one also has $e_j\in S_j\cap e_i\circ S\circ e_j$.

\smallskip
{\rm (c)}\, One has
$$(i,r)\lhd (j,s)\Leftrightarrow j<i\text{ and } T_{(i,r)}\big\arrowvert M_{(j,s)}^{(i)}\,,$$
where $M_{(j,s)}^{(i)}$ denotes the $k\mathfrak{S}_{n_i}$-module
$$M_{(j,s)}^{(i)}:=\Ind_{\mathfrak{S}_{n_j}\times\mathfrak{S}_2\times\cdots\times\mathfrak{S}_2}^{\mathfrak{S}_{n_i}}(T_{(j,s)}\times k\times\cdots\times k)\,.$$
\end{proposition}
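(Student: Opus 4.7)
Part~(a) is immediate from Proposition~\ref{prop brauer}(a) combined with the definition of $\leq$ in (\ref{eqn leq}), since the Brauer $\scrJ$-classes are totally ordered by $S_j<_\scrJ S_i \iff j<i$. For part~(b), my plan is to count cycles directly in the stacking of Brauer diagrams. The bottom of any $x\in\Gamma_{e_i}$ carries precisely the $d-i+1$ arcs $\{\bar{n_i+1},\bar{n_i+2}\},\ldots,\{\bar{n-1},\bar n\}$ inherited from $e_i$, and the top of any $t\in e_i\circ S$ carries the same pattern of arcs at the same positions (possibly together with further top arcs on $\{1,\ldots,n_i\}$); stacking $x$ above $t$ closes exactly these $d-i+1$ matching pairs into cycles while the propagating endpoints of $x$ always leave the middle layer without producing additional cycles, so $\alpha(x,t)=\delta^{d-i+1}$, and $\alpha(u,x)=\delta^{d-i+1}$ follows by the symmetric argument. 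A direct stacking computation shows $e_i\circ e_j=e_j$ in the monoid, hence $e_j=e_i\circ e_j\circ e_j \in S_j\cap e_i\circ S\circ e_j$. To establish transitivity of the action I will describe each $t\in S_j\cap e_i\circ S\circ e_j$ by the matching of its $2(i-j)$ top-arc endpoints in $\{1,\ldots,n_i\}$ together with a bijection from the remaining $n_j$ top-propagating endpoints onto $\{\bar 1,\ldots,\bar{n_j}\}$, the bottom and the rightmost top arcs being rigid; since $\mathfrak{S}_{n_i}$ freely relabels the top and $\mathfrak{S}_{n_j}$ freely relabels the bottom propagating endpoints, one first normalizes the top data with $\mathfrak{S}_{n_i}$ and then realizes any propagating bijection with $\mathfrak{S}_{n_j}$.

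For the one-step relation in part~(c), part~(b) verifies the hypothesis of Corollary~\ref{cor cond (ii)'} with $a_i=\delta^{d-i+1}$ and $a_j=\delta^{d-j+1}$, so the two conditions in Definition~\ref{defi new order}(ii) are equivalent and Corollary~\ref{cor cond (ii)}(a) applies. By transitivity we may take the representative $L=e_j$, and a direct computation gives the graph subgroup
\[
\stab(e_j)=\{(\sigma,\tau)\in\mathfrak{S}_{n_i}\times\mathfrak{S}_{n_j}:\sigma|_{\{n_j+1,\ldots,n_i\}}\in\mathfrak{S}_2\wr\mathfrak{S}_{i-j},\ \tau=\sigma|_{\{1,\ldots,n_j\}}\}
\]
of $\mathfrak{S}_{n_i}\times\mathfrak{S}_{n_j}$, abstractly isomorphic to $\mathfrak{S}_{n_j}\times(\mathfrak{S}_2\wr\mathfrak{S}_{i-j})$. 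Two applications of Frobenius reciprocity together with the self-duality of Specht modules in characteristic zero rewrite the summand condition $T_{(i,r)}\otimes T_{(j,s)}^* \,|\, \Ind_{\stab(e_j)}^{\mathfrak{S}_{n_i}\times\mathfrak{S}_{n_j}}(k)$ as the constituent condition $T_{(i,r)} \,|\, \Ind_{\mathfrak{S}_{n_j}\times(\mathfrak{S}_2\wr\mathfrak{S}_{i-j})}^{\mathfrak{S}_{n_i}}(T_{(j,s)}\boxtimes k)$. Since the trivial $(\mathfrak{S}_2\wr\mathfrak{S}_{i-j})$-representation is a direct summand of $\Ind_{\mathfrak{S}_2^{i-j}}^{\mathfrak{S}_2\wr\mathfrak{S}_{i-j}}(k)\cong k\mathfrak{S}_{i-j}$, this wreath-induced module is a summand of $M_{(j,s)}^{(i)}$, yielding the implication $(i,r)\sqsubset(j,s)\Rightarrow T_{(i,r)} \,|\, M_{(j,s)}^{(i)}$.

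The upgrade to the transitive closure will use the stages identity $M_{(j,s)}^{(i)}\cong \Ind_{\mathfrak{S}_{n_{i-1}}\times\mathfrak{S}_2}^{\mathfrak{S}_{n_i}}(M_{(j,s)}^{(i-1)}\boxtimes k)$. For $(\Rightarrow)$, given a chain $(i,r)=(k_0,r_0)\sqsubset(k_1,r_1)\sqsubset\cdots\sqsubset(k_m,r_m)=(j,s)$, the one-step result above gives $T_{(k_l,r_l)} \,|\, M_{(k_{l+1},r_{l+1})}^{(k_l)}$ at each step, and transitivity of induction upgrades this to $M_{(k_l,r_l)}^{(k_0)} \,|\, M_{(k_{l+1},r_{l+1})}^{(k_0)}$; chaining the resulting divisibilities produces $T_{(i,r)} \,|\, M_{(j,s)}^{(i)}$. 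For $(\Leftarrow)$ I will induct on $i-j$: the stages identity together with semisimplicity of $k\mathfrak{S}_{n_{i-1}}$ supplies some $r'$ with $T_{(i-1,r')} \,|\, M_{(j,s)}^{(i-1)}$ and $T_{(i,r)} \,|\, \Ind_{\mathfrak{S}_{n_{i-1}}\times\mathfrak{S}_2}^{\mathfrak{S}_{n_i}}(T_{(i-1,r')}\boxtimes k)$; the latter is precisely the one-step condition $(i,r)\sqsubset(i-1,r')$ (since $\mathfrak{S}_2\wr\mathfrak{S}_1=\mathfrak{S}_2$), and the former gives $(i-1,r')\unlhd(j,s)$ by induction, so $(i,r)\lhd(j,s)$. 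I expect the most delicate point to be the interplay between the wreath subgroup $\mathfrak{S}_2\wr\mathfrak{S}_{i-j}$ that arises naturally as $\stab(e_j)$ and the Young subgroup $\mathfrak{S}_2^{i-j}$ appearing in the statement---the stages identity is exactly what resolves this interplay and forces the transitive closure to take the Young-subgroup form.
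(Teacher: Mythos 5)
Your proof is correct and reaches all three conclusions of the proposition. Parts~(a) and (b) follow essentially the same route as the paper: (a) is read off from the $\scrJ$-class description and (\ref{eqn leq}), and (b) is proved by direct cycle counting in the concatenation plus a normalisation argument showing the $\mathfrak{S}_{n_i}$-action alone is already transitive on $S_j\cap e_i\circ S\circ e_j$. The same is true for the one-step characterisation in (c): you compute $\stab(e_j)$ as the graph subgroup $\Delta(\mathfrak{S}_{n_j})\cdot(W_{i,j}\times 1)$ with $W_{i,j}\cong\mathfrak{S}_2\wr\mathfrak{S}_{i-j}$, use Frobenius reciprocity (which is exactly the content of the paper's Lemma~\ref{lemma bisets}) and self-duality of Specht modules to turn the bimodule condition into an induction-from-a-wreath-Young-subgroup condition, and then note that the trivial module of $\mathfrak{S}_2\wr\mathfrak{S}_{i-j}$ is a summand of $\Ind_{\mathfrak{S}_2^{i-j}}^{\mathfrak{S}_2\wr\mathfrak{S}_{i-j}}(k)$ so that the wreath-induced module is a summand of $M^{(i)}_{(j,s)}$.

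The genuine departure is in your treatment of the transitive closure. The paper at that point composes the elementary bisets over a chain using Bouc's Mackey formula for tensor products of bisets (\cite[Lemmas~2.3.24 and 2.3.26]{Bouc}), reads off that the resulting biset is $(\mathfrak{S}_{n_i}\times\mathfrak{S}_{n_j})/L$ with $L$ a graph of $\mathfrak{S}_{n_j}$ over a product of wreath factors, observes that this $L$ always contains the Young-subgroup graph $M$ of (\ref{eqn M}), and then closes the loop by choosing the maximal chain $j<j+1<\cdots<i$, for which $L$ \emph{equals} $M$. You instead avoid biset calculus altogether: you encode the same computation in the elementary ``stages identity'' $M^{(i)}_{(j,s)}\cong\Ind_{\mathfrak{S}_{n_{i-1}}\times\mathfrak{S}_2}^{\mathfrak{S}_{n_i}}(M^{(i-1)}_{(j,s)}\boxtimes k)$, prove $\Rightarrow$ by upgrading $T_{(k_l,r_l)}\mid M^{(k_l)}_{(k_{l+1},r_{l+1})}$ to $M^{(k_0)}_{(k_l,r_l)}\mid M^{(k_0)}_{(k_{l+1},r_{l+1})}$ via transitivity and exactness of induction and then chaining, and prove $\Leftarrow$ by downward induction on $i-j$, peeling off one $\mathfrak{S}_2$ at a time and using semisimplicity of $k\mathfrak{S}_{n_{i-1}}$ to produce the intermediate weight $(i-1,r')$. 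Both approaches pivot on the same crucial observation you flagged, namely that $W_{i,i-1}=\mathfrak{S}_2\wr\mathfrak{S}_1=\mathfrak{S}_2$ is already a Young subgroup, so the per-step wreath obstruction disappears for consecutive indices. The paper's route is slicker once the biset machinery is imported; your route is more elementary and self-contained, at the cost of a slightly longer bookkeeping argument.
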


Before proving the proposition, we mention (without proof) the following well-known lemma that will be used repeatedly in the proof below. As usual, given finite groups $G$ and $H$, we identify left $k[G\times H]$-modules with $(kG,kH)$-bimodules, and vice versa.

\begin{lemma}\label{lemma bisets}
Let $G$ and $H$ be finite groups such that $|G|$ and $|H|$ are invertible in $k$ and assume that $k$ is a splitting field for $G\times H$. Let $M$ be a left $k[G\times H]$-module and assume that $X$ is an irreducible left $kG$-module and $Y$ is an irreducible left $kH$-module. Then, $X\otimes_k Y^*$ is a constituent of $M$ if and only if $X$ is a constituent of $M\otimes_{kH} Y$.
\end{lemma}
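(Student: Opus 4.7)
The strategy is to exploit semisimplicity to decompose $M$ into its $k[G\times H]$-isotypic components and then read off both constituent conditions as the positivity of the same multiplicities.

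First I would observe that $k[G\times H]\cong kG\otimes_k kH$ is semisimple (by Maschke, since $|G|$ and $|H|$ are invertible in $k$), and that, since $k$ is a splitting field for $G\times H$ and hence for both $G$ and $H$, every simple left $k[G\times H]$-module, viewed as a $(kG,kH)$-bimodule via the convention $(g,h)\cdot m=gmh^{-1}$, has the form $X'\otimes_k(Y')^*$ for a unique pair of simple left modules $X'$ over $kG$ and $Y'$ over $kH$. Decompose accordingly
\begin{equation*}
  M\cong\bigoplus_{X',Y'}\bigl(X'\otimes_k(Y')^*\bigr)^{n(X',Y')},
\end{equation*}
where $X'$ runs over (isomorphism classes of) simple left $kG$-modules and $Y'$ over simple left $kH$-modules. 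Then by definition $X\otimes_k Y^*$ is a constituent of $M$ as a $k[G\times H]$-module if and only if $n(X,Y)\ge 1$.

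Next I would apply $-\otimes_{kH}Y$ to this decomposition term by term: for each pair $(X',Y')$,
\begin{equation*}
  \bigl(X'\otimes_k(Y')^*\bigr)\otimes_{kH}Y\cong X'\otimes_k\bigl((Y')^*\otimes_{kH}Y\bigr)
\end{equation*}
as left $kG$-modules. Since $kH$ is semisimple, the coinvariants $(Y')^*\otimes_{kH}Y=((Y')^*\otimes_k Y)_H$ coincide with the invariants $((Y')^*\otimes_k Y)^H\cong\Hom_{kH}(Y',Y)$, and Schur's lemma over the splitting field $k$ identifies the latter with $k$ when $Y'\cong Y$ and with $0$ otherwise. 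Hence
\begin{equation*}
  M\otimes_{kH}Y\cong\bigoplus_{X'}(X')^{n(X',Y)}
\end{equation*}
as a left $kG$-module, so $X$ is a constituent of $M\otimes_{kH}Y$ if and only if $n(X,Y)\ge 1$. Comparing the two criteria gives the desired equivalence.

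The argument is essentially routine once semisimplicity and the splitting-field hypothesis are in place; the only point that requires a moment's care is the identification $(Y')^*\otimes_{kH}Y\cong\Hom_{kH}(Y',Y)$, which rests on the coinvariants-equals-invariants principle for semisimple group algebras and the standard finite-dimensional duality $\Hom_k(Y',k)\otimes_k Y\cong\Hom_k(Y',Y)$. This step, rather than being an obstacle, is the crux that converts a statement about bimodule constituents of $M$ into one about left $kG$-module constituents of $M\otimes_{kH}Y$.
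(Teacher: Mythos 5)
Your proof is correct. The paper deliberately states this lemma without proof (``we mention (without proof) the following well-known lemma''), so there is no argument in the paper to compare against; your decomposition of $M$ into the outer tensor product isotypic components $X'\otimes_k (Y')^*$ and the identification $(Y')^*\otimes_{kH}Y\cong\Hom_{kH}(Y',Y)$ via coinvariants-equals-invariants and Schur's lemma is exactly the standard argument the authors are implicitly invoking, and every step (including the passage of the splitting-field hypothesis from $G\times H$ to its quotients $G$ and $H$) goes through.
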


We are now prepared to prove Proposition~\ref{prop order brauer}:

\medskip
\begin{proof}{\bf of Proposition~\ref{prop order brauer}:}\;
Assertion~(a) is clear, by the description of the $\scrJ$-classes of $S$ in Proposition~\ref{prop brauer}(a).

\smallskip

To prove the first assertion in (b), let $x\in \Gamma_{e_i}$ and $t\in e_i\circ S$. Note that cycles in the construction of $x\circ t$ can only result from southern arcs of $x$ and northern arcs of $t$. Since $x\in \Gamma_{e_i}$, and $t\in e_i\circ S$, the diagram of $x$ has precisely $d-i+1$ southern arcs connecting consecutive nodes starting from the right, and $t$ has the $d-i+1$ matching northern arcs (and possibly more, which are irrelevant). Thus there are precisely $d-i+1$ cycles, so that $\alpha(x,t)=\delta^{d-i+1}$. Similarly, we obtain $\alpha(u,x)=\delta^{d-i+1}$, for $u\in S\circ e_i$.

Now suppose that $j<i$, so that also $n_j=n-2(d-j+1)<n-2(d-i+1)=n_i$; in particular, $\mathfrak{S}_{n_j}<\mathfrak{S}_{n_i}$.
Since $e_j=e_i\circ e_j$, we have $e_j\in S_j\cap e_i\circ S\circ e_j$. Now let $t\in S_j\cap e_i\circ S\circ e_j$ be arbitrary.
Then $t$ has precisely $n_j$ propagating lines, each connecting one of the $n_j$ leftmost southern nodes with one
of the $n_i$ leftmost northern nodes. In other words, there is an injection
$\iota:\{1,\ldots,n_j\}\to\{1,\ldots,n_i\}$ such that  $t$ has the following propagating lines:
$$\{\iota(1),\bar{1}\},\ldots,\{\iota(n_j),\bar{n}_j\}\,.$$
Multiplying $t$ from the left by a suitable permutation in $\mathfrak{S}_{n_i}$, we may suppose that
$\iota(q)\leq n_j$, for all $q=1,\ldots,n_j$. Then, for $\sigma\in\mathfrak{S}_{n_i}$ with
$\sigma(\iota(m))=m$ for $m=1,\ldots,n_j$ and $\sigma(m)=m$ for $m=n_j+1,\ldots,n_i$, we get
$e_j=\sigma\cdot t$. Therefore, $S_j\cap e_i\circ S\circ e_j$ is actually a transitive left $\mathfrak{S}_{n_i}$-set, thus
also a transitive left $\mathfrak{S}_{n_i}\times\mathfrak{S}_{n_j}$-set.

\medskip
It remains to verify assertion~(c). To this end, we first determine when $(i,r)\sqsubset (j,s)$ holds.  Note that the hypothesis of Corollaries~\ref{cor cond (ii)} and \ref{cor cond (ii)'} are satisfied by Proposition~\ref{prop brauer}(c) and by part~(b). Therefore, we obtain
\begin{equation}\label{eqn sq}
(i,r)\sqsubset (j,s)\Leftrightarrow j<i \text{ and } T_{(i,r)}\otimes T_{(j,s)}\big\arrowvert \Ind_{L_{i,j}}^{\mathfrak{S}_{n_i}\times\mathfrak{S}_{n_j}}(k)\,,
\end{equation}
where $L_{i,j}:=\stab_{\mathfrak{S}_{n_i}\times\mathfrak{S}_{n_j}}(e_j)$. Note that here we again used the fact that the
simple $k\mathfrak{S}_{n_j}$-module $T_{(j,s)}$ is self-dual. So, by Lemma~\ref{lemma bisets}, we infer that
\begin{equation}\label{eqn ind Lij}
  (i,r)\sqsubset (j,s)\Leftrightarrow j<i\text{ and } T_{(i,r)}\big\arrowvert 
  k[(\mathfrak{S}_{n_i}\times\mathfrak{S}_{n_j})/L_{i,j}]\otimes_{k\mathfrak{S}_{n_j}}T_{(j,s)}\,.
\end{equation}
Now suppose that $j<i$, so that $n_i-n_j=2(i-j)$. In order to describe $L_{i,j}$, let first $W_{i,j}$ denote the subgroup of $\mathfrak{S}_{n_i}$ defined by
\begin{align}\label{eqn Wij}
  W_{i,j}:&=\langle (n_j+1,n_j+2),\, (n_j+1,n_j+3)(n_j+2,n_j+4),\\ \nonumber
  &(n_j+1,n_j+3,\ldots,n_j+2(i-j)-1)(n_j+2,n_j+4,\ldots,n_j+2(i-j))\rangle\,.
\end{align}
Then $W_{i,j}$ is isomorphic to the wreath product $\mathfrak{S}_2\wr\mathfrak{S}_{i-j}$, and we have
$L_{i,j}=\Delta(\mathfrak{S}_{n_j})\cdot (W_{i,j}\times 1)\leq\mathfrak{S}_{n_i}\times\mathfrak{S}_{n_j}$. Thus, writing
$L_{i,j}$ as a quintuple as in \ref{nota ghost}(b), this gives
\begin{equation}\label{eqn Lij}
L_{i,j}=(\mathfrak{S}_{n_j}\times W_{i,j},W_{i,j},\eta_{i,j},\mathfrak{S}_{n_j},1)\,,
\end{equation}
where $\eta_{i,j}:\mathfrak{S}_{n_j}\myiso (\mathfrak{S}_{n_j}\times W_{i,j})/W_{i,j},\;\sigma\mapsto (\sigma,1)W_{i,j}$.

\smallskip
Consequently, we have shown the following:
\begin{align}\label{eqn lhd}
(i,r)&\lhd (j,s)\Leftrightarrow \exists\, q\in\mathbb{N},\,  (i_0,r_0),\ldots,(i_q,r_q)\in\Lambda:\\\nonumber
                    &\quad\quad (i,r)=(i_0,r_0)\sqsubset (i_1,r_1)\sqsubset\cdots\sqsubset (i_q,r_q)=(j,s)\\\nonumber
                     &\Leftrightarrow\exists\, q\in\mathbb{N},\,  (i_0,r_0),\ldots,(i_q,r_q)\in\Lambda:\\\nonumber
                     &\quad \quad j=i_q<\ldots< i_1<i_0=i \text{ and }\\\nonumber
                     & T_{(i_p,r_p)}\big\arrowvert k[(\mathfrak{S}_{n_{i_p}}\times\mathfrak{S}_{n_{i_{p+1}}})/L_{i_p,i_{p+1}}]\otimes_{k\mathfrak{S}_{n_{i_{p+1}}}}T_{(i_{p+1},r_{p+1})}\; \;(0\leq p\leq q-1)\\\nonumber
                      &\Leftrightarrow\exists\, q\in\mathbb{N},\,  (i_0,r_0),\ldots,(i_q,r_q)\in\Lambda:\\\nonumber
                     &\quad \quad j=i_q<\ldots< i_1<i_0=i \text{ and }\\\nonumber
                     & T_{(i_0,r_0)}\big\arrowvert k[(\mathfrak{S}_{n_{i_0}}\times\mathfrak{S}_{n_{i_1}})/L_{i_0,i_1}]\otimes_{k\mathfrak{S}_{n_{i_1}}}\cdots\otimes_{k\mathfrak{S}_{n_{i_{q-1}}}}    k[(\mathfrak{S}_{n_{i_{q-1}}}\times\mathfrak{S}_{n_{i_{q}}})/L_{i_{q-1},i_{q}}]\otimes_{k\mathfrak{S}_{n_{i_q}}}T_{(i_{q},r_{q})}\,.
\end{align}

Suppose that $(i_0,r_0),\ldots,(i_q,r_q)\in\Lambda$ are such that $j=i_q<\ldots< i_1<i_0=i$.
Then the $(k\mathfrak{S}_{n_i},k\mathfrak{S}_{n_j})$-bimodule $k[\mathfrak{S}_{n_{i_0}}\times\mathfrak{S}_{n_{i_1}}/L_{i_0,i_1}]\otimes_{k\mathfrak{S}_{n_{i_1}}}\cdots\otimes_{k\mathfrak{S}_{n_{i_{q-1}}}}    k[\mathfrak{S}_{n_{i_{q-1}}}\times\mathfrak{S}_{n_{i_{q}}}/L_{i_{q-1},i_{q}}]$ is isomorphic to $kX$, where $X$ is the $(\mathfrak{S}_{n_i},\mathfrak{S}_{n_j})$-biset 
$$X:=(\mathfrak{S}_{n_{i_0}}\times\mathfrak{S}_{n_{i_1}}/L_{i_0,i_1})\times_{\mathfrak{S}_{n_{i_1}}}\cdots\times_{\mathfrak{S}_{n_{i_{q-1}}}} (\mathfrak{S}_{n_{i_{q-1}}}\times\mathfrak{S}_{n_{i_{q}}}/L_{i_{q-1},i_{q}})\,.$$
For a precise definition of the tensor product of bisets, see \cite[Definition~2.3.11]{Bouc}.
Since $p_2(L_{i_p,i_{p+1}})=\mathfrak{S}_{n_{i_{p+1}}}$, for $p=0,\ldots,q-1$, the Mackey formula for tensor products
of bisets in \cite[Lemma~2.3.24]{Bouc} gives $X\cong \mathfrak{S}_{n_i}\times\mathfrak{S}_{n_j}/L$, where
\begin{equation}\label{eqn L}
L:=(\mathfrak{S}_{n_j}\times W_{i_{q-1},i_q}\times W_{i_{q-2},i_{q-1}}\times\cdots\times W_{i_0,i_1},
W_{i_{q-1},i_q}\times W_{i_{q-2},i_{q-1}}\times\cdots\times W_{i_0,i_1},\eta,\mathfrak{S}_{n_j},1)
\end{equation}
and  $\eta(\sigma):=(\sigma,1,\ldots,1)(W_{i_{q-1},i_q}\times\cdots\times W_{i_0,i_1})$, for $\sigma\in\mathfrak{S}_{n_j}$.
Set $W:=W_{i_{q-1},i_q}\times\cdots\times W_{i_0,i_1}$.
Then, by \cite[Lemma~2.3.26]{Bouc}, we also know that, as a functor from $k\mathfrak{S}_{n_j}$\textbf{-mod}
to $k\mathfrak{S}_{n_i}$\textbf{-mod}, tensoring with $kX$ over $k\mathfrak{S}_{n_j}$
is equivalent to $\Ind_{\mathfrak{S}_{n_j}\times W}^{\mathfrak{S}_{n_i}}\circ\Inf_{\mathfrak{S}_{n_j}}^{\mathfrak{S}_{n_j}\times W}$. Therefore, this implies

\begin{align}\nonumber
(i,r)\lhd (j,s)&\Leftrightarrow \exists\, q\in\mathbb{N},\,   j=j_q<\ldots <i_1<i_0=i:\;
                    T_{(i,r)}\big\arrowvert \Ind_{\mathfrak{S}_{n_j}\times W}^{\mathfrak{S}_{n_i}}(\Inf_{\mathfrak{S}_{n_j}}^{\mathfrak{S}_{n_j}\times W}(T_{(j,s)}))\,,\\ \label{eqn W}
                   & \quad\quad \text{for } W:=W_{i_{q-1},i_q}\times\cdots\times W_{i_0,i_1}\,.
\end{align}
Recall again that, by Lemma~\ref{lemma bisets}, the condition on the right-hand side of (\ref{eqn W}) holds if and only if
$$T_{(i,r)}\otimes T_{(j,s)}\big\arrowvert \Ind_L^{\mathfrak{S}_{n_i}\times\mathfrak{S}_{n_j}}(k)\,;$$
here $L$ is the group in (\ref{eqn L}), which contains the subgroup
\begin{equation}\label{eqn M}
M:=(\mathfrak{S}_{n_j}\times (\mathfrak{S}_2)^{i-j},(\mathfrak{S}_2)^{i-j},\eta',\mathfrak{S}_{n_j},1)\,,
\end{equation}
where $\eta'(\sigma)=(\sigma,1,\ldots,1)\mathfrak{S}_{2}^{i-j}$, for $\sigma\in\mathfrak{S}_{n_j}$.
Hence, if
$T_{(i,r)}\otimes T_{(j,s)}\big\arrowvert \Ind_L^{\mathfrak{S}_{n_i}\times\mathfrak{S}_{n_j}}(k)$
then we have
$T_{(i,r)}\otimes T_{(j,s)}\big\arrowvert \Ind_M^{\mathfrak{S}_{n_i}\times\mathfrak{S}_{n_j}}(k)$
as well.
Conversely, if $T_{(i,r)}\otimes T_{(j,s)}\big\arrowvert \Ind_M^{\mathfrak{S}_{n_i}\times\mathfrak{S}_{n_j}}(k)$
then $T_{(i,r)}\big\arrowvert \Ind_{\mathfrak{S}_{n_j}\times (\mathfrak{S}_2)^{i-j}}^{\mathfrak{S}_{n_i}}(\Inf_{\mathfrak{S}_{n_j}}^{\mathfrak{S}_{n_j}\times (\mathfrak{S}_2)^{i-j}}(T_{(j,s)}))$, again by Lemma~\ref{lemma bisets} and \cite[Lemma~2.3.26]{Bouc}.
But then we may consider the chain $j<j+1<j+2<\ldots< i-1<i$ and the group $W:=W_{j+1,j}\times\cdots\times W_{i,i-1}=(\mathfrak{S}_2)^{i-j}$. So our above considerations imply $(i,r)\lhd (j,s)$.

\smallskip

To summarize, we have now shown that
\begin{equation}\label{eqn Y}
(i,r)\lhd (j,s)\Leftrightarrow j<i\quad \text{ and }\quad T_{(i,r)}\big\arrowvert \Ind_{\mathfrak{S}_{n_j}\times (\mathfrak{S}_2)^{i-j}}^{\mathfrak{S}_{n_i}}(\Inf_{\mathfrak{S}_{n_j}}^{\mathfrak{S}_{n_j}\times (\mathfrak{S}_2)^{i-j}}(T_{(j,s)}))\,.
\end{equation}
Since $\Inf_{\mathfrak{S}_{n_j}}^{\mathfrak{S}_{n_j}\times (\mathfrak{S}_2)^{i-j}}(T_{(j,s)})\cong T_{(j,s)}\otimes k\otimes \cdots\otimes k$,
for $j<i$, this completes the proof of (c).
\end{proof}

The next example will show that, also in the case where the twisted category algebra is a Brauer algebra,
the partial order $\leq$ on $\Lambda$ is a proper refinement of $\unlhd$, and that the relation
$\sqsubseteq$ in Definition~\ref{defi new order} is not transitive.

\begin{example}\label{expl brauer sq}
Let $n:=6$, so that $d=\frac{6}{2}=3$; in particular, there are four $\scrJ$-classes of $S$. Moreover, 
for simplicity let $k:=\mathbb{C}$.

\smallskip

(a)\, If $i=3$ and $j=2$ then 
$n_i=4$ and $n_j=2$. The isomorphism classes of simple $k\mathfrak{S}_4$-modules
are labelled by the partitions of 4, and the isomorphism classes of simple $k\mathfrak{S}_2$-modules are labelled by
the partitions of 2. We thus choose our notation in such a way that the simple $k\mathfrak{S}_{n_i}$-module
$T_{(i,r)}$ corresponds to partition $\lambda_r$, and the simple $k\mathfrak{S}_{n_j}$-module $T_{(j,s)}$ corresponds
to the partition $\lambda_s$:

\smallskip

\begin{center}
\begin{tabular}{|c|c|c|c|c|c||c|c|c|}\hline
$r$ & 1& 2 &3 & 4 & 5& $s$& 1& 2\\\hline
$\lambda_r$& $(4)$ & $(3,1)$ & $(2^2)$ & $(2,1^2)$& $(1^4)$&$\lambda_s$& $(2)$ &$(1^2)$\\\hline
\end{tabular}
\end{center}
By the Littlewood--Richardson Rule \cite[Theorem~16.4]{J}, we obtain 
$$
\Ind_{\mathfrak{S}_2\times\mathfrak{S}_2}^{\mathfrak{S}_4}(T_{(j,1)}\otimes k)\cong T_{(i,1)}\oplus T_{(i,2)}\oplus T_{(i,3)}\; \text{ and }\;
\Ind_{\mathfrak{S}_2\times\mathfrak{S}_2}^{\mathfrak{S}_4}(T_{(j,2)}\otimes k)\cong T_{(i,2)}\oplus T_{(i,4)}\,,
$$
so that $(i,r)\lhd (j,s)$ if and only if $(r,s)\in\{(1,1),(2,1),(3,1),(2,2),(4,2)\}$, by Proposition~\ref{prop order brauer}\l c\r.
On the other hand, $(i,r)< (j,s)$, for all $1\leq r\leq 5$ and $1\leq s\leq 2$.

\medskip

(b)\, Now let $l:=4$, so that $n_l=6$.  Again, we choose our labelling such that the simple $k\mathfrak{S}_6$-module
$T_{(l,t)}$ corresponds to the partition $\lambda_t$ of 6:

\smallskip

\begin{center}
\begin{tabular}{|c|c|c|c|c|c|c|c|c|c|c|c|}\hline
$t$ & 1&2 &3&4&5&6&7&8&9&10&11\\\hline
$\lambda_t$& $(6)$& $(5,1)$& $(4,2)$& $(4,1^2)$& $(3^2)$& $(3,2,1)$& $(3,1^3)$& $(2^3)$& $(2^2,1^2)$& $(2,1^4)$& $(1^4)$\\\hline
\end{tabular}
\end{center}

\smallskip

Using the notation introduced in the proof of Proposition~\ref{prop order brauer}, we have
\begin{align*}
L_{4,2}&=\stab_{\mathfrak{S}_6\times\mathfrak{S}_2}(e_2)=\Delta(\mathfrak{S}_2)(W_{4,2}\times 1)\text{ with } W_{4,2}\cong \mathfrak{S}_2\wr\mathfrak{S}_2\,,\\
L_{4,3}&=\stab_{\mathfrak{S}_6\times\mathfrak{S}_4}(e_3)=\Delta(\mathfrak{S}_4)(W_{4,3}\times 1)\text{ with } W_{4,3}\cong \mathfrak{S}_2\,,\\
L_{3,2}&=\stab_{\mathfrak{S}_4\times\mathfrak{S}_2}(e_2)=\Delta(\mathfrak{S}_2)(W_{3,2}\times 1)\text{ with } W_{3,2}\cong \mathfrak{S}_2\,.
\end{align*}
Recall that, by (\ref{eqn sq}), we have $(i,r)\sqsubset (j,s)$ if and only if $T_{(i,r)}\otimes T_{(j,s)}\big\arrowvert \Ind_{L_{i,j}}^{\mathfrak{S}_{n_i}\times\mathfrak{S}_{n_j}}(k)$.
By a character computation with MAGMA \cite{MAGMA}, we infer that $(i,2)\sqsubset (j,1)$ and $(l,5)\sqsubset (i,2)$, whereas $(l,5)\not\sqsubset (j,1)$.
\end{example}

\begin{remark}\label{rem CdVM}
Via the partial order $\unlhd$ we, in particular, obtain information on the decomposition numbers of the Brauer algebra $B_\delta(n)$, that is, on the composition factors of standard modules. It should be pointed out that further information on decomposition numbers
of Brauer algebras over fields of characteristic 0 can, for instance,  be found in \cite{CdVM}.
\end{remark}



\pagebreak

\end{document}